\theoremstyle{plain}
\newtheorem{theorem}{Theorem}[section]
\newtheorem{proposition}[theorem]{Proposition}
\newtheorem{lemma}[theorem]{Lemma}
\newtheorem{corollary}[theorem]{Corollary}
\theoremstyle{definition}
\newtheorem{definition}[theorem]{Definition}
\newtheorem{notation}[theorem]{Notation}
\newtheorem{remark}[theorem]{Remark}
\newtheorem{example}[theorem]{Example}
\newcommand{\setRP}{\mathrm{RP}} 
\newcommand{\setC}{\mathbb{C}}
\newcommand{\setN}{\mathbb{N}}
\newcommand{\setQ}{\mathbb{Q}}
\newcommand{\setR}{\mathbb{R}}
\newcommand{\setZ}{\mathbb{Z}}
\newcommand{\rngW}[2][]{W_{#1} (#2)}
\newcommand{\rngM}{\mathcal{M}} 
\newcommand{\rngMhat}{\hat{\rngM}} 
\newcommand{\intO}{\mathcal{O}} 
\newcommand{\idlp}{\mathfrak{p}} 
\newcommand{\schA}{\mathbb{A}}
\newcommand{\schG}{\mathbb{G}}
\newcommand{\shfO}{\mathscr{O}}
\newcommand{\txtsing}{\text{\textnormal{sing}}} 
\newcommand{\mtvL}{\mathbb{L}}  
\newcommand{\Mst}{\mathnormal{M}_{\text{\textnormal{st}}}} 
\DeclareMathOperator{\rank}{rank}
\DeclareMathOperator{\Hom}{Hom}
\DeclareMathOperator{\Gal}{Gal}
\DeclareMathOperator{\ord}{ord}
\DeclareMathOperator{\Ker}{Ker}
\DeclareMathOperator{\Img}{Im}
\DeclareMathOperator{\length}{length}
\newcommand{\floor}[1]{\left\lfloor#1\right\rfloor}
\newcommand{\ceil}[1]{\left\lceil#1\right\rceil}
\DeclareMathOperator{\discrep}{discrep} 
\DeclareMathOperator{\sht}{sht}         
\DeclareMathOperator{\Spec}{Spec}
\DeclareMathOperator{\Exc}{Exc} 
\newcommand{\spcJ}[1][\infty]{\mathnormal{J}_{#1}}
\newcommand{\GCov}[1][G]{\operatorname{\mathnormal{#1}-Cov}}
\newcommand{\sepClos}[1]{{#1}^{\text{\textnormal{sep}}}} 
\newcommand{\vect}[1]{\bm{#1}}
\title{The wild McKay correspondence for cyclic groups of prime power order}
\author[M. Tanno]{Mahito Tanno}
\author[T. Yasuda]{Takehiko Yasuda}
\address{
  Department of Mathematics, Graduate School of Science, Osaka University,
  Toyonaka, Osaka 560-0043, Japan}
\email{\textsf{mahito@presche.me}}
\email{\textsf{u529757k@ecs.osaka-u.ac.jp}}
\address{
  Department of Mathematics, Graduate School of Science, Osaka University,
  Toyonaka, Osaka 560-0043, Japan}
\email{\textsf{takehikoyasuda@math.sci.osaka-u.ac.jp}}
\subjclass[2010]{Primary 14E16;
Secondary 11S15, 14B05, 14E18, 14E22, 14G17, 14R20}
\begin{document}
\begin{abstract}
  The \(\boldsymbol{v}\)-function is a key ingredient in the wild McKay correspondence. In
  this paper, we give a formula to compute it in terms of valuations of Witt vectors, when the
  given group is a cyclic group of prime power order.  We apply it to study singularities of a
  quotient variety by a cyclic group of prime square order.  We give a criterion whether the
  stringy motive of the quotient variety converges or not.  Furthermore, if the given
  representation is indecomposable, then we also give a simple criterion for the quotient
  variety being terminal, canonical, log canonical, and not log canonical.  With this
  criterion, we obtain more examples of quotient varieties which are klt but not
  Cohen--Macaulay.
\end{abstract}

\maketitle

\section{Introduction}\label{sec:introduction}
The subject of this paper is the wild McKay correspondence for cyclic groups of prime power
order.  The \emph{\(\bm{v}\)-function} plays an essential role in the wild McKay
correspondence; also it is considered as a common generalization of the \emph{age} invariant
in the tame McKay correspondence and of the \emph{Artin conductor}, an important invariant in
the number theory, see Wood--Yasuda~\cite{Wood2015:Mass} for details.  In spite of its
importance, it is difficult to compute the \(\bm{v}\)-functions in a general situation.  We
give an explicit formula of this function in the case of cyclic group of prime power order,
generalizing the one by the second author \cite{Yasuda2014:p-Cyclic} for the case of prime
order.  We then apply it to study the discrepancies of singularities of quotient varieties by
the cyclic group of prime square order.

The McKay correspondence relates an invariant of a representation \(V\) of a finite group
\(G\) with an invariant of the associated quotient variety \(X \coloneq V/G\).  Depending on
which type of invariant one considers, there are different approaches to the McKay
correspondence.  The one using motivic invariants originates in the works of
Batyrev~\cite{Batyrev1999:Non-Archimedean} and Denef and Loeser~\cite{Denef2002:Motivic} in
characteristic zero.  The second author~\cite{Yasuda2019:Motivic} generalized their results to
arbitrary characteristics, in particular, including the wild case, that is, the case where the
finite group in question has order divisible by the characteristic of the base field.  In what
follows, we denote by \(k\) an algebraically closed field of characteristic \(p > 0\).

\begin{theorem}[{\cite[Corollary~16.3]{Yasuda2019:Motivic}}]
  Assume that \(G\) acts on an affine space \(\schA_{k}^{d}\) linearly and effectively and
  that \(G\) has no pseudo-reflection.  Then we have
  \begin{equation}
    \Mst(X) = \int_{\GCov(D)} \mtvL^{d - \bm{v}}.
  \end{equation}
  Here \(\Mst(X)\) denotes the \emph{stringy motive} of the quotient variety \(X\),
  \(\GCov(D)\) denotes the moduli space of \emph{\(G\)-covers} of \(D \coloneq \Spec k[[t]]\),
  and \(\bm{v}\) is the \emph{\(\bm{v}\)-function} \(\bm{v} \colon \GCov(D) \to \setQ\)
  associated to the \(G\)-action on \(\schA_{k}^{d}\).
\end{theorem}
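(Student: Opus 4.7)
The plan is to follow the motivic-integration strategy pioneered by Batyrev and Denef--Loeser in characteristic zero, adapted to the wild setting. The key observation is that an arc \(\gamma \colon D \to X\) on the singular quotient is best understood through its lift to \(V\): base-changing the \'etale \(G\)-torsor \(V \setminus \{0\} \to X \setminus \{0\}\) along \(\gamma\) produces a \(G\)-torsor on the punctured disc, which extends canonically to a \(G\)-cover \(E \to D\) together with a \(G\)-equivariant map \(\tilde\gamma \colon E \to V\). In positive characteristic with \(p \mid |G|\), these covers genuinely ramify, so \(\spcJ X\) cannot be replaced by \(\spcJ V / G\) and the natural base of integration is \(\GCov(D)\).

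First I would partition \(\spcJ X\) (up to a motivically negligible set) by the isomorphism class of the associated \(G\)-cover. The no-pseudo-reflection hypothesis forces \(V \to X\) to be \'etale in codimension one, so \(X\) is normal with effective discrepancy and \(\Mst(X)\) is well defined via any suitable resolution. For each cover \(E \in \GCov(D)\), the fiber of this partition is (essentially) the affine scheme of \(G\)-equivariant morphisms \(E \to V\), whose motivic class is governed by the \(G\)-module decomposition of \(\shfO_{E}\) and whose ``unweighted'' contribution is \(\mtvL^{d}\). Second, I would quantify how the quotient map \(V \to X\) distorts motivic volumes along each stratum: this is a wild analogue of the Jacobian change-of-variables formula, and the correction factor at a cover \(E\) is precisely what the function \(\bm{v}(E)\) is designed to encode, simultaneously generalising the age (in the tame case) and the Artin conductor (in the ramification-theoretic case).

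Finally, a Fubini-type argument assembles these pieces into \(\Mst(X) = \int_{\GCov(D)} \mtvL^{d - \bm{v}}\), integrating the fiberwise class \(\mtvL^{d}\) corrected by \(\mtvL^{-\bm{v}}\) over the base. The hard step is the change-of-variables formula itself: in the tame case it is a standard Jacobian computation, but in the wild case the higher ramification filtration and Artin conductors produce nontrivial contributions, and one must show that \(\bm{v}\), defined intrinsically in terms of the \(G\)-action on \(V\) and the ramification of \(E/D\), exactly matches the analytic correction arising from the quotient map. Once this transformation law is in place, comparison with the definition of \(\Mst(X)\) on a resolution becomes a bookkeeping exercise.
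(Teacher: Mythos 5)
This statement is not proved in the paper at all: it is imported verbatim as \cite[Corollary~16.3]{Yasuda2019:Motivic}, so there is no internal proof to compare your argument against. What you have written is a faithful summary of the strategy that the cited reference actually follows (stratify \(\spcJ X\) by the \(G\)-cover obtained by pulling back \(V\setminus\{0\}\to X\setminus\{0\}\) along an arc and normalizing, compute the fiberwise contribution over each \(E\in\GCov(D)\), and then apply a wild change-of-variables formula), so as an outline it points in the right direction.

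However, as a proof it has a genuine gap: the entire mathematical content of the theorem is concentrated in the step you defer. Saying that the correction factor ``is precisely what \(\bm{v}(E)\) is designed to encode'' is circular --- \(\bm{v}\) is defined concretely via the tuning module \(\Xi_E\subset\shfO_E^{\oplus d}\) (\cref{def:v-function}), and the theorem is exactly the assertion that the colength \(\frac{1}{\#G}\length\bigl(\shfO_E^{\oplus d}/\shfO_E\cdot\Xi_E\bigr)\) equals the Jacobian-type defect of the untwisted arc space of \(V\) over that of \(X\). Establishing this requires (i) constructing the ``untwisting'' identification between arcs of \(X\) inducing the cover \(E\) and \(G\)-equivariant maps \(E\to V\), (ii) proving that the latter space is, up to the twist by \(\Xi_E\), an affine space of the right dimension and that the relevant truncation maps are piecewise trivial fibrations so the motivic measure is defined, and (iii) proving the transformation rule \(\ord\mathcal{J}_X = \text{(something)} - \bm{v}(E)\) fiber by fiber, including the verification that the no-pseudo-reflection hypothesis makes the boundary \(\Delta\) vanish so that \(\Mst(X)\) rather than \(\Mst(X,\Delta)\) appears. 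None of these is routine in the wild case, and your sketch does not attempt any of them. You should either carry out these steps or, as the authors do, simply cite the result.
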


Since stringy motives contain information on singularities of the quotient variety, the above
theorem allows us to study singularities of the quotient variety \(X\) in terms of the moduli
space \(\GCov(D)\) and the \(\bm{v}\)-function on it.  For this purpose, it is important to
understand the precise structure of the moduli space and compute the \(\bm{v}\)-function.  The
second author~\cite{Yasuda2014:p-Cyclic} worked out the case \(G = \setZ/p\setZ\), using the
Artin--Schreier theory.  We extend it to the case \(G = \setZ/p^{n}\setZ\) (\(n > 0\)), using
the Artin--Schreier--Witt theory.

We find out that the \(\bm{v}\)-function in this case can be computed in terms of the
ramification jumps of the field extension corresponding to the given \(G\)-cover.  Let \(E\)
be a connected \(G\)-cover of \(D\) and \(L/k((t))\) the corresponding \(G\)-extension; the
case of connected covers is essential and the case of non-connected covers is reduced to the
case of a smaller group.  According to the Artin--Schreier--Witt theory, the extension
\(L/k((t))\) is given by an equation
\(\wp(g_{0}, g_{1}, \dotsc, g_{n - 1}) = (f_{0}, f_{1}, \dotsc, f_{n - 1})\), where
\((f_{0}, f_{1}, \dotsc, f_{n - 1}) \in \rngW[n]{k((t))}\) is a reduced Witt vector.  We can
decompose the extension \(L/k((t))\) into a tower of \(p\)-cyclic extensions
\begin{equation}
  L = K_{n - 1} \supset K_{n - 2} \supset \dotsb \supset K_{-1} = k((t)),
\end{equation}
where \(K_{i} = K_{i - 1}(g_{i})\).  Key facts here are first that the value \(\bm{v}(E)\) of
the \(\bm{v}\)-function at \(E\) is expressed in terms of ramification jumps of extensions
\(K_{i}/K_{i - 1}\) (\cref{lem:v-function-valuation}) and second that these ramification jumps
are determined by orders of \(f_i\) (\cref{prop:upper-ramification-jumps}).  We denote
\(j_{m} = - \ord f_{m}\).  Then the \((i + 1)\)-th upper ramification jump \(u_{i}\) and
\((i + 1)\)-th lower ramification jump \(l_{i}\) are given by
\begin{align}
  u_{i} &= \max\{ p^{n - 1 - m} j_{m} \mid m = 0, 1, \dotsc, i - 1\}, \\
  l_{i} &= u_{0} + (u_{1} - u_{0}) p  + \dotsb + (u_{i} - u_{i - 1}) p^{i},
\end{align}
see the proof of~\cref{thm:v-function} for details.  Since the \(\bm{v}\)-function is additive
with respect to the direct sum of representations (it is immediate from \cref{def:v-function},
see \cite[Lemma~3.4]{Wood2015:Mass}), the case of indecomposable representations is essential.
We also note that for each integer \(d \leq p^{n}\), there exists exactly one indecomposable
representation of dimension \(d\) modulo isomorphisms (see, for instance, \cite[p.~431, (64.2)
Lemma]{Curtis2006:Representation}); it corresponds to the Jordan block of size \(d\) with
eigenvalue \(1\).

\begin{theorem}[\cref{thm:v-function}]
  Assume that the \(G\)-representation \(V\) is indecomposable of dimension \(d\).  With the
  notation as above, we have
  \begin{equation}\label{formula:v-func}\tag{\(\star\)}
    \bm{v}(E) = \sum_{\substack{
        0 \leq i_{0} + p i_{1} + \dotsb + p^{n - 1} i_{n - 1} < d, \\
        0 \leq i_{0}, i_{1}, \dotsb, i_{n - 1} < p
      }}
    \ceil{\frac{
        i_{0} p^{n - 1} + i_{1} p^{n - 2} l_{1} + \dotsb + i_{n - 1} l_{n_{1}}
      }{p^{n}}}.
  \end{equation}
\end{theorem}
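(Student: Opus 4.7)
The plan is to combine \cref{lem:v-function-valuation}, which expresses $\bm{v}(E)$ as a weighted sum over a basis of $V$ of valuations in $L$, with \cref{prop:upper-ramification-jumps}, which gives the upper ramification jumps of $L/k((t))$ in closed form in terms of $j_m = -\ord f_m$ and hence also the lower jumps $l_m$. First I would fix a Jordan basis $e_0, e_1, \dotsc, e_{d-1}$ of the indecomposable representation $V$ on which a generator $\sigma$ of $G$ acts via $(\sigma - 1)e_j = e_{j-1}$, with the convention $e_{-1} \coloneq 0$. Since $\bm{v}$ is additive with respect to direct sums of representations, it suffices to compute the contribution of each $e_j$ and sum over $j = 0, 1, \dotsc, d - 1$.

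The key input is a compatibility between the ramification filtration of $G$ and the Jordan-block filtration of $V$. Writing $G_{m} \coloneq \Gal(L/K_{m-1}) \subset G$ for the unique subgroup of index $p^m$, one has $V^{G_m} = \operatorname{span}(e_0, \dotsc, e_{p^m - 1})$, because $\sigma^{p^m} - 1 = (\sigma - 1)^{p^m}$ in characteristic $p$. Writing $j = i_0 + p i_1 + \dotsb + p^{n-1} i_{n-1}$ in base $p$, the largest index $m$ with $i_m \neq 0$ identifies the lowest level of the tower $k((t)) = K_{-1} \subset K_0 \subset \dotsb \subset K_{n-1} = L$ on which $e_j$ is not yet fixed. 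Constructing a tuning element $a_j \in L^{\times}$ adapted to this filtration---by successively multiplying by uniformizers of the $K_m$ for each nonzero digit of $j$---and computing $v_{L}(a_j)$ by induction along the tower, each nonzero digit $i_m$ contributes an amount equal to $i_m$ times the lower jump $l_m$, rescaled by the residual ramification index $p^{n-1-m}$ of $L/K_m$. Summing over the nonzero digits then yields the numerator $i_0 p^{n-1} + i_1 p^{n-2} l_1 + \dotsb + i_{n-1} l_{n-1}$ appearing inside the ceiling of $(\star)$, while the ceiling itself reflects the $1/|G| = 1/p^n$ normalization in the definition of $\bm{v}$.

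The main obstacle will be the bookkeeping at each level of the tower: passing between upper and lower numbering via the Herbrand function, and showing that the incremental contribution of $i_m$ at step $m$ really does collapse to $p^{n-1-m} l_m$ after cancellations among the successive differences of lower jumps and the ramification indices. I anticipate organizing this as an induction on $n$, with the base case $n = 1$ recovering the Artin--Schreier computation of \cite{Yasuda2014:p-Cyclic}; the inductive step should rely on viewing $K_{n-1}/k((t))$ as a $\setZ/p^{n-1}\setZ$-extension of $K_0/k((t))$, so that the formula for $K_{n-1}/K_0$ (over a base with its own residual ramification) splices cleanly with the Artin--Schreier contribution at the bottom level to produce the symmetric base-$p$ sum in $(\star)$.
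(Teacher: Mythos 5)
Your proposal follows essentially the same route as the paper's proof: take \cref{lem:v-function-valuation} as the starting point, which writes \(\bm{v}(E)\) as a sum of \(\ceil{-\sum_m i_m v_L(\tilde g_m)/p^n}\) over the base-\(p\) digits of \(0\le a_I<d\); read off the upper ramification jumps from \cref{prop:upper-ramification-jumps}; convert to the lower jumps \(l_m\) via the Hasse--Herbrand function; identify \(v_L(\tilde g_m)=-p^{n-1-m}l_m\) using the tower \(K_{-1}\subset K_0\subset\dotsb\subset K_{n-1}=L\); and substitute. One caveat: your appeal to additivity of \(\bm v\) under direct sums to reduce to the individual Jordan basis vectors \(e_j\) is not a valid justification, since \(V\) is indecomposable and hence is not a direct sum of the lines \(ke_j\); the correct reason the answer decomposes as a sum over a basis is the triangularity of the matrix formed by the tuning-module basis \(\tilde{\vect{g}}^I t^{n_I}\), which is precisely the content of \cref{lem:v-function-valuation} (via \cref{prop:basis-ker-delta,cor:basis-of-ker-delta}), so your argument is fine as long as you lean on that lemma rather than on additivity.
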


In relation to the minimal model program, it is natural to ask: how can we determine
representation-theoretically when a quotient variety \(V/G\) (with \(G\) an arbitrary finite
group) is terminal, canonical, log terminal or log canonical?  From the wild McKay
correspondence with the formula~\cref{formula:v-func}, we can give a partial answer to this
question.  Note that the formula~\cref{formula:v-func} implicitly includes many maxima so that
we have to make a case-by-case analysis to compute the integral
\(\int_{\GCov(D)} \mtvL^{d - \bm{v}}\). Thus the computation rapidly becomes harder, as the
exponent \(n\) increases.  For this reason, we focus on the case \(n=2\) to evaluate the
integral and get some results on singularities. Before stating our results in this direction,
we need to introduce some invariants of representations.  For an indecomposable representation
\(V\) of \(G= \setZ/p^{2}\setZ\) of dimension \(d\), writing \(d = qp+r\) (\(0 \le r < p \)),
we define
\begin{align}
  B_{V} &= \frac{qp(q-1)}{2}+qr, \\
  C_{V}^{>} &= p\left(\frac{qp(p-1)}{2}+\frac{r(r-1)}{2}\right) + (p^2-p+1) \left(\frac{qp(q-1)}{2}+qr\right).
\end{align}
We generalize them to decomposable representations in the way that they become additive for
direct sums.

\begin{theorem}[{\cref{thm:cond-converge}}]
  Assume that \(G = \setZ/p^{2}\setZ\).  The integral \(\int_{\GCov(D)} \mtvL^{d - \bm{v}}\)
  converges if and only if the following inequalities hold:
  \begin{align}
    B_{V} &\geq p, \\
    C_{V}^{>} &\geq p^{3} - p + 1.
  \end{align}
\end{theorem}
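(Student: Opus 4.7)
The plan is to stratify $\GCov(D)$ by the valuations of the Witt vector defining each cover, substitute $(\star)$ into $\mtvL^{d-\bm{v}}$, and determine when the resulting series converges in the completed Grothendieck ring. By the additivity of $\bm{v}$, $B_V$ and $C_V^{>}$ under direct sums, the verification reduces to the indecomposable $V$ of dimension $d=qp+r$.

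First I would set up the stratification. Via Artin--Schreier--Witt theory, connected $G$-covers correspond to reduced Witt vectors $(f_0,f_1)\in\rngW[2]{k((t))}$, which I group by $(j_0,j_1)=(-\ord f_0,-\ord f_1)$, with $p\nmid j_m$ whenever $j_m>0$. The motivic measure of each stratum is a power of $\mtvL$ whose exponent is an explicit piecewise affine function of $(j_0,j_1)$. Non-connected strata---covers factoring through $\setZ/p\setZ$---reduce to the $n=1$ case of~\cite{Yasuda2014:p-Cyclic} and contribute only the constraint corresponding to $B_V\ge p$. By the formulas recalled before $(\star)$, the upper jump is $u_1=\max(pj_0,j_1)$, so the lower jump $l_1$ is piecewise linear in $(j_0,j_1)$: it equals $(p^{2}-p+1)j_0$ on the cone $pj_0\ge j_1$ and $j_0+p(j_1-j_0)$ on the cone $pj_0\le j_1$.

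Next I would substitute $l_1$ into $(\star)$, obtaining $\bm{v}(E)$ as a piecewise affine function of $(j_0,j_1)$ up to bounded error from the ceilings. On each cone, combining $\mtvL^{d-\bm{v}}$ with the motivic measure gives a power of $\mtvL$ whose exponent is affine in $(j_0,j_1)$, so convergence of the double series forces the coefficients of $j_0$ and $j_1$ to be suitably non-positive. A direct evaluation over the index set $\{(i_0,i_1):0\le i_0+pi_1<d,\ 0\le i_0,i_1<p\}$ shows that $\sum i_1=B_V$ and $\sum[pi_0+(p^{2}-p+1)i_1]=C_V^{>}$, and these are exactly the leading linear coefficients of $\bm{v}(E)$ in the two cones. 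Matching them against the motivic-measure contributions converts the non-positivity requirements into precisely $B_V\ge p$ (from the $j_1$-direction, equivalently from the non-connected strata) and $C_V^{>}\ge p^{3}-p+1$ (from the $j_0$-direction on the cone $pj_0\ge j_1$).

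The main obstacle I expect is the bookkeeping around \emph{reduced} Witt vectors: one must compute the motivic measure of each stratum carefully, account for the constraints imposed by reducedness, and ensure that the offsets from the ceilings in $(\star)$ combine with those contributions to give exactly the right-hand sides $p$ and $p^{3}-p+1$ rather than off-by-constant variants. A subordinate technical point is to check that the boundary $pj_0=j_1$ between the two cones, along with the special strata where some $j_m$ vanishes, impose no additional constraint.
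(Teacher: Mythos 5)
Your plan is correct and follows essentially the same route as the paper: stratify by \((j_0,j_1)=-\ord\vect{f}\), handle the disconnected stratum \(j_0=-\infty\) via the \(n=1\) theory (yielding \(D_W=B_V\geq p\)), and on the two cones \(pj_0>j_1\) and \(pj_0<j_1\) substitute the piecewise-linear lower jump \(l_1\) into \((\star)\) and read off convergence from the leading linear coefficients \(B_V\) and \(C_V^{>}\). The only point your sketch leaves implicit, which the paper handles via the identities \(C_V^{>}=C_V^{<,0}+pC_V^{<,1}\) and \(C_V^{<,1}=pB_V\), is that the cone \(pj_0<j_1\) produces its own pair of edge conditions (along the directions \((0,1)\) and \((1,p)\)) which must be verified to be implied by \(B_V\geq p\) and \(C_V^{>}\geq p^3-p+1\); this is exactly the "no additional constraint" check you flag.
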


From the wild McKay correspondence, the convergence of the integral
\(\int_{\GCov(D)} \mtvL^{d - \bm{v}}\) is equivalent to that of the stringy motive
\(\Mst(X)\). The latter implies that \(X\) is log terminal and the converse holds if the pair
has a log resolution. Thus we obtain the following corollary:

\begin{corollary}[\cref{prop:cond-klt}]
  Assume that \(G = \setZ/p^{2}\setZ\) has no pseudo-reflection.  If the inequalities
  \(B_{V} \geq p\) and \(C_{V}^{>} \geq p^{3} - p + 1\) hold, then the quotient variety
  \(X = V/G\) is log terminal.  Furthermore, if there exists a log resolution of \(X\), then
  the converse is also true.
\end{corollary}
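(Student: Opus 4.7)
The plan is to combine the wild McKay correspondence from the opening theorem with the convergence criterion \cref{thm:cond-converge} just established, and then to invoke the standard relationship between stringy-motive convergence and log terminality. Because $G = \setZ/p^{2}\setZ$ has no pseudo-reflection and acts linearly and effectively on $V = \schA_{k}^{d}$, the opening theorem applies directly and gives
\begin{equation*}
  \Mst(X) = \int_{\GCov(D)} \mtvL^{d - \bm{v}}.
\end{equation*}
Hence convergence of $\Mst(X)$ in $\rngMhat$ is equivalent to convergence of the motivic integral on the right, which by \cref{thm:cond-converge} is equivalent to the two inequalities $B_{V} \geq p$ and $C_{V}^{>} \geq p^{3} - p + 1$.

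Next I would invoke the general bridge, essentially due to Batyrev and re-established motivically in \cite{Yasuda2019:Motivic}, between convergence of $\Mst(X)$ and log-terminal singularities: convergence of $\Mst(X)$ always implies that $X$ is log terminal, and the converse holds as soon as a log resolution of $X$ exists. Concatenating this bridge with the equivalence of the first step yields both implications stated in the corollary: the inequalities on $B_V$ and $C_V^{>}$ force $\Mst(X)$ to converge and therefore $X$ to be log terminal, and conversely, given a log resolution, log terminality forces $\Mst(X)$ to converge and hence the inequalities to hold.

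The main (and essentially only) obstacle lies in the bridge step. One must verify carefully that the motivic integral formula for $\Mst(X)$ employed here coincides, when evaluated against a log resolution, with the discrepancy-based definition of the stringy invariant, so that a prime divisor of discrepancy $\leq -1$ contributes a divergent term to the integral and, conversely, every such divergence stems from such a divisor. In the positive-characteristic, wild setting this is not a new computation but a careful citation of the motivic-integration apparatus developed in \cite{Yasuda2019:Motivic}; once it is in place, the proof is the short chain of equivalences indicated above and requires no further calculation.
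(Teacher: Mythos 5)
Your proposal is correct and follows essentially the same route as the paper: apply \cref{thm:cond-converge} to equate the two inequalities with convergence of the motivic integral, identify that integral with \(\Mst(X)\) via the wild McKay correspondence (\cref{Yasuda2019:Cor14.4}, with \(\Delta = 0\) since there is no pseudo-reflection), and then pass between convergence of the stringy motive and log terminality via \cref{Yasuda2014:Prop6.6}. The ``bridge'' you flag as the main obstacle is exactly what the paper delegates to that cited proposition, so no further argument is needed.
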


Furthermore, for an indecomposable \(\setZ/p^{2}\setZ\)-representation \(V\), we can estimate
the discrepancies/total discrepancy of the quotient variety:

\begin{theorem}[\cref{thm:estimate-discrep}]
  Assume that \(G = \setZ/p^{2}\setZ\) and \(V\) is an indecomposable \(G\)-representation of
  dimension \(d\) (\(p + 1 < d \leq p^{2}\)).  Then,
  \begin{equation}
    \text{\(X\) is }
    \begin{cases}
      \text{terminal}, \\
      \text{canonical}, \\
      \text{log canonical}, \\
      \text{not log canonical}
    \end{cases}
    \text{if and only if }
    \begin{cases}
      d \geq 2p + 1, \\
      d \geq 2p, \\
      d \geq 2p - 1, \\
      d < 2p - 1.
    \end{cases}
  \end{equation}
\end{theorem}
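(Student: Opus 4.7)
The plan is to combine the explicit formula~\cref{formula:v-func} for $n=2$ with the standard dictionary from the wild McKay correspondence that turns the integral $\int_{\GCov(D)}\mtvL^{d-\bm{v}}$ into statements about discrepancies on $X=V/G$. Concretely, for each connected $G$-cover $E$ one reads off a divisorial valuation on $X$ and a (log) discrepancy expressed through $\bm{v}(E)-d$ and $\sht(E)$; thus $X$ is terminal, canonical, or log canonical according as the associated infimum over nontrivial $E$ is $>0$, $\geq 0$, or $\geq -1$, and $X$ fails to be log canonical as soon as this infimum drops below $-1$. This translation is exactly the one powering~\cref{thm:cond-converge} and~\cref{prop:cond-klt}, so the proof reduces to an extremal problem for $\bm{v}$.

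I would first specialize~\cref{formula:v-func} to $n=2$. A connected $G$-cover is described (after reduction) by Witt data $(f_0,f_1)$ whose only relevant invariants are $j_0=-\ord f_0$ and $j_1=-\ord f_1$, both prime to $p$. By~\cref{prop:upper-ramification-jumps} these determine $l_0=j_0$ and $l_1=j_0+(\max\{pj_0,j_1\}-j_0)p$, and writing $d=qp+r$ with $0\leq r<p$, the sum becomes $d$ ceiling terms of the form $\ceil{(i_0 p + i_1 l_1)/p^{2}}$ indexed by pairs $(i_0,i_1)\in\{0,\dots,p-1\}^{2}$ with $i_0+pi_1<d$, which split naturally according to $i_1\in\{0,\dots,q\}$.

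Next I would minimize $\bm{v}(E)-d$ over nontrivial $G$-covers. Each ceiling term in~\cref{formula:v-func} is monotone nondecreasing in $(l_0,l_1)$, hence in $(j_0,j_1)$, so the extremum is attained at the smallest admissible pairs, namely test covers with $(j_0,j_1)=(1,1)$ and $(j_0,j_1)=(1,p+1)$; these are the covers whose contributions are encoded in the invariants $B_V$ and $C_V^{>}$ that already appeared before~\cref{thm:cond-converge}. Evaluating $\bm{v}(E)$ at these test covers via the $n=2$ form of~\cref{formula:v-func} produces simple closed-form expressions in $q$ and $r$, whose comparison to $d$ (respectively $d-1$) yields the equivalences
\begin{equation*}
\text{lc} \Longleftrightarrow d\geq 2p-1,\qquad \text{canonical}\Longleftrightarrow d\geq 2p,\qquad \text{terminal}\Longleftrightarrow d\geq 2p+1.
\end{equation*}
The boundary cases $d=2p-1,2p,2p+1$ are verified by direct substitution, noting that the hypothesis $p+1<d\leq p^{2}$ ensures $q\geq 1$ so that the relevant ceiling terms are nonempty.

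The main obstacle is the combinatorial accounting of the ceiling functions. Although only two parameters $(j_0,j_1)$ control $\bm{v}$, the splitting $d=qp+r$ together with the range of $(i_0,i_1)$ forces a case analysis: three regimes $j_1<pj_0$, $j_1=pj_0$, $j_1>pj_0$ yield different piecewise-linear expressions for $l_1$ and hence for $\bm{v}(E)$, and one must confirm that no exotic combination of parameters produces a smaller value of $\bm{v}(E)-d$ than the test covers $(1,1)$ and $(1,p+1)$. A monotonicity argument handles most regimes; the remaining cases reduce to comparing a handful of explicit ceiling sums, which is the genuinely computational core of the proof.
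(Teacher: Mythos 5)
There is a genuine gap in your translation from the motivic integral to discrepancies. You propose to read off a log discrepancy from each individual cover via \(\bm{v}(E)-d\) and then take an infimum, locating the extremum at the smallest covers \((j_0,j_1)=(1,1)\) and \((1,p+1)\) by monotonicity of the ceiling terms. But the correct dictionary, \cref{prop:Yasuda2019Prop2.1}, is
\(d(X)=d-1-\sup_{\vect{j}}\lambda(M_{\vect{j}})\) with
\(\lambda(M_{\vect{j}})=\dim\nu(\GCov(D;\vect{j}))+d-\bm{v}(\vect{j})\),
and the stratum dimension \(\dim\nu(\GCov(D;\vect{j}))=\sum_{j_l\neq-\infty}\bigl(j_l-\floor{j_l/p}\bigr)\) grows linearly in \(\vect{j}\). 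The relevant extremal problem is therefore a competition between this growth and the growth of \(\bm{v}\) as \(\vect{j}\to\infty\), not a minimization of \(\bm{v}(E)-d\) over small test covers. Your scheme cannot even detect failure of log canonicity for \(d<2p-1\): at \((j_0,j_1)=(1,1)\) one has \(\dim\nu=2\) while \(\bm{v}\geq d-1\), so nothing pathological is visible there; the obstruction \(d(X)=-\infty\) comes from \(\lambda(M_{\vect{j}})\to\infty\) along large \(\vect{j}\) (equivalently \(B_V<p-1\), \cref{thm:sup-of-dimension}). Conversely, for terminality at \(d\geq 2p+1\) one needs an \emph{upper} bound on \(\sup_{\vect{j}}\lambda\) uniformly over all strata (\cref{thm:upper-bound-of-sup}, which rests on the linear lower bounds for \(\bm{e}_V^{>},\bm{e}_V^{<}\) in \cref{prop:lower-bounds-of-e}, plus a separate finite check for \(p=3\)); your monotonicity argument points in the wrong direction for this.

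A second omission: you restrict attention to connected covers, but the disconnected stratum \(j_0=-\infty\) (covers with \(p\) components, governed by the restricted \(\setZ/p\setZ\)-representation) supplies the binding constraint in the borderline cases. The paper's proof of the equalities \(d(X)=-1\) for \(d=2p-1\) and \(d(X)=0\) for \(d=2p\) uses precisely the lower bound \(\sup_{j_0=-\infty}\lambda(M_{\vect{j}})\geq d+p-1-B_V\) from \cref{ineq:lower-bound:j0=-infty} to cap \(d(X)\) from above, combined with \cref{prop:cond-klt} (for \(d=2p\)) to bound it from below. Without the stratum-dimension term and without the disconnected covers, your computation of \(\bm{v}\) at two test covers cannot recover these sharp values, so the case analysis you sketch does not close.
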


We note that the indecomposable representation of \(\setZ/p^{2}\setZ\) of dimension \(d\) is
not effective if \(d \le p\), has pseudo-reflections if \(d = p + 1\) and does not have a
pseudo-reflection if \(d > p + 1 \).

Related to the minimal model program in positive characteristics, some singularities which are
klt but not Cohen--Macaulay are constructed in recent years (see
Kov\'{a}cs~\cite{Kovacs2018:Non-Cohen},
Yasuda~\cite{Yasuda2014:p-Cyclic,Yasuda2019:Discrepancies},
Cascini--Tanaka~\cite{Cascini2019:Purely}, Bernasconi~\cite{Bernasconi2021:Kawamata-Viehweg},
Arvidsson--Bernasconi--Lacini~\cite{Arvidsson2020:Kawamata-Viehweg}, and
Totaro~\cite{Totaro2019:Failure}; see also \cite{Yasuda2018:Cohen}).  The theorem above
provides more such examples; for instance, if \(V\) is the indecomposable
\(\setZ/4\setZ\)-representation of dimension \(4\) in characteristic \(2\), the quotient
variety \(V/(\setZ/4\setZ)\) is canonical but not Cohen--Macaulay.

We now give a few comments on the case \(G\) has pseudo-reflections.  Generally, if a finite
group \(G\) has a pseudo-reflection, then we can find a \(\setQ\)-Weil divisor \(\Delta\) on
\(X = V/G\) such that \(V \to (X, \Delta)\) is crepant.  The wild McKay correspondence theorem
holds for log pairs by replacing \(\Mst(X)\) by \(\Mst(X, \Delta)\).  For a representation of
\(G = \setZ/p^{n}\setZ\) with general \(n\), we determine when there is a pseudo-reflection
(\cref{cor:pseudo-ref}).  Moreover we show that if the given effective \(G\)-representation
has a pseudo-reflection, then the divisor \(\Delta\) as above on the quotient variety \(X\) is
irreducible and has multiplicity \(p - 1\) (\cref{prop:boundary}), hence the pair
\((X, \Delta)\) is not log canonical unless \(p = 2\).  If \(p = n = 2\), then whether or not
the pair is log canonical depends on whether the representation has a direct summand of
dimension one (\cref{rem:not-lc:p=2}).

We also note that although we work over an algebraically closed field throughout the paper for
the simplicity reason, it is straightforward to generalize our results to any field of
characteristic \(p > 0\) simply by the base change.

The outline of this paper is as follows.  In \cref{sec:G-covers}, we first recall basic facts
about the Artin--Schreier--Witt theory.  After that, we describe the moduli space
\(\GCov(D)\) of \(G\)-covers of \(D = \Spec k[[t]]\) and decompose it to strata \(\GCov(D;
\vect{j})\).  In \cref{sec:v-function}, we see that \(\bm{v}\)-functions are written by
valuations of Witt vectors and by upper/lower ramification jumps of \(G\)-extensions.  In
\cref{sec:discrepancy}, we briefly review the wild McKay correspondence and its application to
singularities.  In \cref{sec:case:n=2}, we discuss the case \(G = \setZ/p^{2}\setZ\) and give
our main results as corollaries of \cref{thm:v-function}.

\subsection*{Notation and convention}
Unless otherwise noted, we follow the following notation.  We denote by \(k\) an algebraically
closed field of characteristic \(p > 0\) and by \(K = k((t))\) the field of formal Laurent
power series over \(k\).  We set \(G = \langle \sigma \rangle\) a cyclic group of order
\(p^{n}\).

\subsection*{Acknowledgments}
We would like to thank Takeshi Saito for giving us useful information. This work was supported
by JSPS KAKENHI Grant Numbers 18H01112 and 18K18710.

\section{\texorpdfstring{\(G\)}{G}-covers of the formal punctured disk}\label{sec:G-covers}
In this section, we discuss the moduli spaces of \'{e}tale \(G\)-covers of the formal
punctured disk \(D^{*} \coloneq \Spec K\).

By an \'{e}tale \(G\)-cover \(E^{*} \to D^{*}\) of \(D^{*}\), we mean it is a finite \'{e}tale
morphism of degree \(\# G\) endowed with a \(G\)-action on \(E^{*}\) such that
\(E^{*} / G = D^{*}\).  By a \emph{\(G\)-cover} \(E \to D \coloneq \Spec k[[t]]\), we mean it
is the normalization \(E\) of \(D\) in an \'{e}tale \(G\)-cover \(E^{*} \to D^{*}\).  We
denote by \(\GCov(D^{*})\) (resp.\ \(\GCov(D)\)) the set of all \'{e}tale \(G\)-covers of
\(D^{*}\) (resp.\ \(G\)-covers of \(D\)).  Since there is a one-to-one correspondence between
\(\GCov(D^{*})\) and \(\GCov(D)\), we sometimes identify them.

\subsection{The Artin--Schreier--Witt theory}\label{sec:ASW-theory}
Let us recall some basic facts from the theory of Witt vectors.  We denote by \(\rngW[m]{K}\)
the ring of Witt vectors of length \(m\) over \(K\).  We introduce important morphisms.  One
is the \emph{Frobenius} morphism
\begin{equation}
  \operatorname{Frob} \colon \rngW[m]{K} \to \rngW[n]{K},
  (a_{0}, a_{1}, \dotsc) \mapsto (a_{0}^{p}, a_{1}^{p}, \dotsc).
\end{equation}
We denote by \(\wp \coloneq \operatorname{Frob} - \operatorname{id}\) the Artin--Schreier
morphism.  The other is the \emph{Verschiebung} morphism
\begin{equation}
  \rngW[m]{K} \to \rngW[m + 1]{K},
  (a_{0}, a_{1}, \dotsc) \mapsto (0, a_{0}, a_{1}, \dotsc).
\end{equation}
They are homomorphisms of additive groups.  Note that the Verschiebung morphism commutes with
\(\wp\) and that
\begin{equation}
  (a_{0}, a_{1}, \dotsc, a_{l - 1}, a_{l}, \dotsc)
  = (a_{0}, a_{1}, \dotsc, a_{l - 1}, 0, \dotsc) + (0, \dots, 0, a_{l}, a_{l + 1}, \dotsc)
\end{equation}
holds for every \(l \geq 1\).

Let us denote by \(\sepClos{K}\) the separable closure of \(K\), by \(K_{p^{n}}\) the maximal
abelian extension of exponent \(p^{n}\) over \(K\).  As sets, we can describe
\begin{align}
  \GCov(D^{*}) &= \Hom_{\text{cont}}(\Gal(\sepClos{K} / K), \setZ / {p^{n} \setZ}) \\
               &= \Hom_{\text{cont}}(\Gal(K_{p^{n}} / K), \setZ / {p^{n} \setZ}) \\
               &= \Hom_{\text{cont}}(\Gal(K_{p^{n}} / K), {\tfrac{1}{p^{n}} \setZ} / \setZ) \\
               &\overset{(\heartsuit)}{=}
                 \Hom_{\text{cont}}(\Gal(K_{p^{n}} / K), \setQ / \setZ) \\
               &\overset{(\clubsuit)}{=}
                 \rngW[n]{K} / \wp(\rngW[n]{K}).
\end{align}
Since every element of \(\Gal(K_{p^{n}} / K)\) has order dividing \(p^{n}\), the image of any
morphisms \(\Gal(K_{p^{n}}/ K) \to \setQ / \setZ\) is contained in \((1/p^{n}) \setZ / \setZ\)
and hence the equality \((\heartsuit)\) holds (see
\cite[pp.~340--341]{Neukirch2008:Cohomology} for details).  The equality \((\clubsuit)\) is a
consequence of~\cite[Theorem~6.1.9]{Neukirch2008:Cohomology}.  For a Witt vector
\(\vect{f} \in \rngW[n]{K}\), we denote by \(E_{\vect{f}}^{*}\) the \(G\)-cover of \(D^{*}\)
corresponding to the class of \(\vect{f}\).  Note that \(E_{\vect{f}}^{*}\) is connected if
and only if \(f_{0} \notin \wp(K)\).  More explicitly, we can see the following (see, for
instance, \cite[Chapter~VI, Exercise~50]{Lang2002:Algebra}). 

\begin{proposition}\label{prop:cyclic-ext}
  For a Galois extension \(L/K\), it is \(p^{n}\)-cyclic if and only if there
  exists a Witt vector \(\vect{f} = (f_{0}, f_{1}, \dotsc, f_{n-1}) \in
  \rngW[n]{K}\) with \(f_{0} \notin \wp(K)\) such that \(L = K(g_{0}, g_{1}, \dots, g_{n-1})\)
  where the Witt vector \(\vect{g} = (g_{0}, g_{1}, \dotsc, g_{n-1})\) is a root of an
  equation \(\wp(\vect{g}) = \vect{f}\).  Moreover, a  generator \(\sigma\) of the Galois
  group \(\Gal(L/K)\) is given by \(\sigma(\vect{g}) = \vect{g} + \mathbf{1}\).
\end{proposition}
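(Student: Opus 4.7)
The plan is to extract the proposition directly from the bijection $\GCov(D^{*}) = \rngW[n]{K}/\wp(\rngW[n]{K})$ established just above, which identifies a class $[\vect{f}]$ with a continuous character $\phi_{\vect{f}} \colon \Gal(\sepClos{K}/K) \to \setZ/p^{n}\setZ$ whose fixed field is the extension $L = K(\vect{g})$ generated by any Witt-vector solution $\vect{g}$ of $\wp(\vect{g}) = \vect{f}$. The claim then splits into two statements: (i) $\phi_{\vect{f}}$ is \emph{surjective}, equivalently $L/K$ is cyclic of exact order $p^{n}$, if and only if $f_{0} \notin \wp(K)$; and (ii) a generator $\sigma$ of $\Gal(L/K)$ can be chosen so that $\sigma(\vect{g}) = \vect{g} + \mathbf{1}$.

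For (ii), I would first describe the Galois action explicitly. Since $\Ker\bigl(\wp \colon \rngW[n]{\sepClos{K}} \to \rngW[n]{\sepClos{K}}\bigr) = \rngW[n]{\setF_{p}} \simeq \setZ/p^{n}\setZ$, any two solutions of $\wp(\vect{g}) = \vect{f}$ differ by an element of $\setZ/p^{n}\setZ$; hence for each $\tau \in \Gal(\sepClos{K}/K)$ there is a unique $c_{\tau} \in \setZ/p^{n}\setZ$ with $\tau(\vect{g}) = \vect{g} + c_{\tau}$, and the assignment $\tau \mapsto c_{\tau}$ is precisely the Artin--Schreier--Witt character $\phi_{\vect{f}}$. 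Thus $L = K(g_{0}, \dotsc, g_{n-1})$ is the fixed field of $\Ker\phi_{\vect{f}}$, $\Gal(L/K) \simeq \Img\phi_{\vect{f}}$, and any $\sigma$ with $c_{\sigma} = \mathbf{1}$ is a generator realizing the formula in (ii). For (i), I would observe that the Witt-coordinate projection $(f_{0}, \dotsc, f_{n-1}) \mapsto f_{0}$ descends to a homomorphism $\rngW[n]{K}/\wp(\rngW[n]{K}) \to K/\wp(K)$, which on the Galois side corresponds to reduction modulo $p$: the first component of the equation $\wp(\vect{g}) = \vect{f}$ reads $g_{0}^{p} - g_{0} = f_{0}$, so $g_{0}$ is the Artin--Schreier element associated to $[f_{0}]$ and $\sigma(g_{0}) = g_{0} + (c_{\sigma} \bmod p)$. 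Since an element of $\setZ/p^{n}\setZ$ generates the whole group if and only if its residue modulo $p$ is nonzero, surjectivity of $\phi_{\vect{f}}$ is equivalent to $[f_{0}] \neq 0$ in $K/\wp(K)$, that is, to $f_{0} \notin \wp(K)$.

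The main obstacle, modest as it is, is checking that the character constructed in (ii) by the solution-subtraction rule genuinely agrees with the Artin--Schreier--Witt character produced by the isomorphism $(\clubsuit)$; this is standard but requires unwinding the definition of that isomorphism from \cite[Theorem~6.1.9]{Neukirch2008:Cohomology}. Once this compatibility is in hand, the proposition is assembled by combining the mod-$p$ projection identification in (i) with the elementary observation about generators of $\setZ/p^{n}\setZ$.
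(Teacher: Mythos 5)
Your argument is correct; the paper itself gives no proof of this proposition, only a citation to \cite[Chapter~VI, Exercise~50]{Lang2002:Algebra}, and your sketch is precisely the standard Artin--Schreier--Witt argument that reference contains: solutions of \(\wp(\vect{g}) = \vect{f}\) differ by elements of \(\rngW[n]{\setF_{p}} \cong \setZ/p^{n}\setZ\), the resulting character has image of exact order \(p^{n}\) iff its first Witt coordinate (the Artin--Schreier character of \(f_{0}\)) is nontrivial, i.e.\ iff \(f_{0} \notin \wp(K)\). The one compatibility you flag (between the solution-subtraction character and \((\clubsuit)\)) is not actually needed if you instead invoke the vanishing of \(H^{1}\) of the additive group \(\rngW[n]{\sepClos{K}}\) to get the ``only if'' direction directly, but either route is fine.
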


We next find good representatives of elements of \(\rngW[n]{K}/\wp(\rngW[n]{K})\).

\begin{notation}
  We put \(\setN' \coloneq \{j \in \setZ \mid j > 0, p \nmid j\}\).
\end{notation}

\begin{lemma}\label{Yas14:lem2.3}
  For \(f \in K\), there exists a unique Laurent polynomial of the form
  \begin{equation}
  g = \sum_{i \in \setN'} g_{-i}t^{-i} \in k[t^{-1}] \subset K
  \end{equation}
  such that \(f - g \in \wp(K)\).
\end{lemma}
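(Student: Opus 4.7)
The plan is to prove existence and uniqueness separately. For existence, I would split $f$ into its holomorphic part (powers $t^{i}$ with $i \ge 0$) and its polar part ($i < 0$), then show that each part can be reduced modulo $\wp(K)$ to a Laurent polynomial of the required form. For uniqueness, I would compare any two such representatives using $t$-adic valuation considerations.

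For the holomorphic part, the first step is to show that $\wp$ maps $k[[t]]$ onto itself, so the whole of $k[[t]]$ can be absorbed into $\wp(K)$. Given $\sum_{i \ge 0} a_i t^i \in k[[t]]$, I would construct $h = \sum_{i \ge 0} h_i t^i \in k[[t]]$ satisfying $\wp(h) = h^p - h = \sum a_i t^i$ recursively: the coefficient of $t^0$ yields $h_0^p - h_0 = a_0$, solvable in $k$ since $k$ is algebraically closed; for $i > 0$ with $p \nmid i$ one reads off $h_i = -a_i$; for $i = jp$ with $j \ge 1$ one sets $h_{jp} = h_j^p - a_{jp}$. No circularity arises because the recursion moves to strictly smaller indices within the polar part below.

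For the polar part, the key identity is
\begin{equation}
\wp(c\, t^{-j}) = c^{p}\, t^{-jp} - c\, t^{-j},
\end{equation}
so that for any $j \ge 1$ and any $a \in k$, choosing $c = a^{1/p}$ (available because $k$ is perfect) gives $a\, t^{-jp} \equiv a^{1/p} t^{-j} \pmod{\wp(K)}$. Iterating, any polar term $a\, t^{-j}$ with $p \mid j$ is reduced to a term whose exponent lies in $-\setN'$. Applied to the finitely many negative-index terms of $f$, this produces the desired $g = \sum_{i \in \setN'} g_{-i}\, t^{-i}$.

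For uniqueness, suppose $g$ and $g'$ are two such representatives with $g - g' = \wp(h)$ for some $h \in K$. If $g - g' \ne 0$, let $-i_{0}$ be its $t$-adic order; by hypothesis $i_{0} \in \setN'$, so $p \nmid i_{0}$ and $i_{0} > 0$. Now split by $\ord(h)$: if $\ord(h) \ge 0$ then $\wp(h) \in k[[t]]$, contradicting $\ord(\wp(h)) = -i_{0} < 0$; if $\ord(h) < 0$ then $\ord(h^{p}) = p\,\ord(h) < \ord(h)$, so $\ord(\wp(h)) = p\,\ord(h)$, which is divisible by $p$, contradicting $p \nmid i_{0}$.

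Neither step presents a serious obstacle: the argument is routine once one notices the valuation parity mismatch driving uniqueness, and the identity $\wp(c t^{-j}) = c^{p} t^{-jp} - c\, t^{-j}$ driving existence. The only point requiring a bit of care is the recursive construction showing $\wp(k[[t]]) = k[[t]]$, but this simply imitates the classical Artin--Schreier argument in each degree.
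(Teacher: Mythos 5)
Your proof is correct and complete; the paper gives no argument of its own here, merely citing \cite[Lemma~2.3]{Yasuda2014:p-Cyclic}, and your reduction --- absorbing \(k[[t]]\) by the recursive surjectivity of \(\wp\) on \(k[[t]]\), pushing \(p\)-divisible polar exponents down via \(\wp(c\,t^{-j}) = c^{p}t^{-jp} - c\,t^{-j}\), and the valuation-parity argument (\(\ord\wp(h) = p\ord(h)\) when \(\ord(h)<0\)) for uniqueness --- is exactly the standard one used there. The only blemish is the garbled phrase ``within the polar part below'': the point is simply that \(h_{jp}\) depends only on \(h_{j}\) with \(0 \leq j < jp\), so the recursion in the holomorphic step is well founded.
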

\begin{proof}
  See, for instance, \cite[Lemma~2.3]{Yasuda2014:p-Cyclic}. 
\end{proof}

We call a Laurent polynomial of the above form a \emph{representative polynomial}.  We denote
by \(\setRP_{k}\) the set of representative polynomials.  We can extend \cref{Yas14:lem2.3} as
follows.
\begin{lemma}\label{lem:representative-Witt-vector}
  For a Witt vector \(\vect{f} \in \rngW[n]{K}\), there exists a unique
  \(\vect{g} = (g_{0}, g_{1}, \dotsc, g_{n - 1}) \in \rngW[n]{K}\) such that each \(g_{l}\)
  is a representative polynomial and \(\vect{f} - \vect{g} \in \wp(\rngW[n]{K})\).
\end{lemma}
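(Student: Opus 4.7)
The plan is induction on $n$, with the base case $n = 1$ being exactly \cref{Yas14:lem2.3}. The two key tools are the decomposition formula $(a_0, a_1, \dotsc, a_{n-1}) = (a_0, 0, \dotsc, 0) + V(a_1, \dotsc, a_{n-1})$ recorded earlier in the excerpt (here $V$ denotes the Verschiebung) and the commutation relation $\wp \circ V = V \circ \wp$, both of which let me reduce questions about length-$n$ Witt vectors to length-$(n-1)$ ones by peeling off the zeroth coordinate.

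For existence at length $n$, given $\vect{f} = (f_{0}, \dotsc, f_{n-1})$, I first apply \cref{Yas14:lem2.3} to $f_{0}$ to obtain a representative polynomial $g_{0}$ and some $h_{0} \in K$ with $f_{0} - g_{0} = \wp(h_{0})$. The Witt vector $\vect{f} - (g_{0}, 0, \dotsc, 0) - \wp((h_{0}, 0, \dotsc, 0))$ then has zeroth coordinate equal to $f_{0} - g_{0} - \wp(h_{0}) = 0$ (addition is componentwise in the zeroth slot), so it lies in the image of $V$, say it equals $V(\vect{f}')$ for a unique $\vect{f}' \in \rngW[n-1]{K}$. Applying the inductive hypothesis to $\vect{f}'$ produces a tuple $\vect{g}' = (g_{1}, \dotsc, g_{n-1})$ of representative polynomials and $\vect{h}' \in \rngW[n-1]{K}$ with $\vect{f}' - \vect{g}' = \wp(\vect{h}')$. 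Taking $V$ of both sides, using $V\wp = \wp V$, and invoking the decomposition formula gives $\vect{f} = (g_{0}, g_{1}, \dotsc, g_{n-1}) + \wp\bigl((h_{0}, 0, \dotsc, 0) + V(\vect{h}')\bigr)$.

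For uniqueness, suppose $\vect{g}$ and $\vect{g}'$ both meet the requirements; then $\vect{g} - \vect{g}' = \wp(\vect{h})$ for some $\vect{h} = (h_{0}, \dotsc, h_{n-1})$. Reading zeroth coordinates gives $g_{0} - g_{0}' = \wp(h_{0})$, so uniqueness in \cref{Yas14:lem2.3} forces $g_{0} = g_{0}'$ and $h_{0} \in \setF_{p}$. The crucial observation to close the induction is that $\wp((h_{0}, 0, \dotsc, 0)) = (h_{0}^{p}, 0, \dotsc, 0) - (h_{0}, 0, \dotsc, 0) = 0$ whenever $h_{0} \in \setF_{p}$. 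Combining this with the decomposition $\vect{h} = (h_{0}, 0, \dotsc, 0) + V(h_{1}, \dotsc, h_{n-1})$ and again with $\wp V = V \wp$ yields $\wp(\vect{h}) = V\bigl(\wp((h_{1}, \dotsc, h_{n-1}))\bigr)$, while the decomposition of $\vect{g}$ and $\vect{g}'$ gives $\vect{g} - \vect{g}' = V\bigl((g_{1}, \dotsc, g_{n-1}) - (g_{1}', \dotsc, g_{n-1}')\bigr)$. Since $V$ is injective, the length-$(n-1)$ tuples differ by an element of $\wp(\rngW[n-1]{K})$, and the inductive hypothesis forces them to coincide.

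The main obstacle, though modest, is the non-componentwise nature of Witt vector arithmetic: I need the simplifications above to be legitimately componentwise in the zeroth slot, and the vanishing $\wp((h_{0}, 0, \dotsc, 0)) = 0$ for $h_{0} \in \setF_{p}$ is what actually lets the uniqueness argument descend one length. Everything else is bookkeeping with the decomposition formula and the identity $V \wp = \wp V$ recalled in the excerpt.
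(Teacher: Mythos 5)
Your proof is correct and follows essentially the same route as the paper's: induction on $n$, reducing to \cref{Yas14:lem2.3} for the zeroth coordinate and peeling it off via the Verschiebung and the identity $\wp V = V\wp$. The one place you go beyond the paper is in the uniqueness step, where you explicitly justify the descent $V(\vect{a}) \in \wp(\rngW{K})\Rightarrow \vect{a}\in\wp(\rngW[n-1]{K})$ via the observation that $\wp((h_0,0,\dotsc,0))=0$ for $h_0\in\setF_p$ — a detail the paper's proof leaves implicit.
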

\begin{proof}
  We prove by induction on \(n\).  The case \(n = 1\) is just
  \cref{Yas14:lem2.3}.  Let us denote by \(f_{l}\) the \(l\)-th component of \(\vect{f}\).
  Take \(h_{0} \in K\) satisfying \(g_{0} = f_{0} + \wp(h_{0})\), where \(g_{0}\) is the
  unique representative polynomial.  In the Witt ring \(\rngW[m]{K}\), we have
  \begin{align}
    \label{eq:key-lemma:reprWittVec}
    (f_{0}, \dotsc) + \wp(h_{0}, \dotsc) &= (f_{0} + \wp(h_{0}), \dotsc) \\
    &= (g_{0}, \dotsc).
  \end{align}
  Without loss of generality, we may assume that \(\vect{f} = (g_{0}, f_{1}, \dotsc)\).
  From the induction hypothesis, there exists \(g_{1}, g_{2}, \dotsc, g_{n - 1}\) uniquely
  such that each \(g_{l}\) is a representative polynomial and
  \begin{equation}
    (f_{1}, \dotsc, f_{n - 1}) \equiv (g_{1}, \dotsc, g_{n - 1}) \pmod{\wp(\rngW[n - 1]{K})}.
  \end{equation}
  holds.  Since the Verschiebung morphism commutes with \(\wp\), thus we have
  \begin{equation}
    (0, f_{1}, \dotsc, f_{n - 1}) \equiv (0, g_{1}, \dotsc, g_{n - 1}) \pmod{\wp(\rngW[n]{K})}.
  \end{equation}
  Then
  \begin{align}
    (g_{0}, f_{1}, \dotsc, f_{n-1})
    & = (g_{0}, 0, \dotsc, 0) + (0, f_{1}, \dotsc, f_{n-1}) \\
    &\equiv (g_{0}, 0, \dotsc, 0) + (0, g_{1}, \dotsc, g_{n-1}) \pmod{\wp(\rngW[n]{K})} \\
    &= (g_{0}, g_{1}, \dotsc, g_{n-1}).
  \end{align}
  The first and last equality follows from the property of the Verschiebung morphism.
  Therefore, we have proved the existence of \(\vect{g}\).

  Next, we show the uniqueness.  \cref{eq:key-lemma:reprWittVec} shows that the first entry
  \(g_{0}\) is uniquely determined.  Suppose that \((g_{0}, g_{1}, \dotsc, g_{n - 1})\) and
  \((g_{0}, g_{1}', \dotsc, g_{n - 1}')\) satisfy the condition.  Then we have
  \begin{equation}
    (0, f_{1}, \dotsc, f_{n - 1}) \equiv (0, g_{1}, \dotsc, g_{n - 1})
    \equiv (0, g_{1}', \dotsc, g_{n - 1}') \pmod{\wp(\rngW[n]{K})}.
  \end{equation}
  Again from the induction hypothesis, this shows that \(g_{1}, \dotsc, g_{n - 1}\) are
  uniquely determined.
\end{proof}

We call a Witt vector \(\vect{g} = (g_{0}, g_{1}, \dotsc, g_{n - 1})\) consisting of
representative polynomials \(g_{l}\) is a \emph{representative Witt vector}.  More generally,
a Witt vector \(\vect{f} = {(f_{l})}_{l} \in \rngW[n]{K}\) is called \emph{reduced} (or
\emph{standard form}) if \(p \nmid v_{K}(f_{l})\) and \(v_{K}(f_{l}) < 0\) for every \(l\),
where \(v_{K}\) denotes the normalized valuation on \(k\).

\begin{corollary}
  We have a one-to-one correspondence
  \begin{equation}
    \GCov(D^{*}) \leftrightarrow {\left(\setRP_{k}\right)}^{n}.
  \end{equation}
\end{corollary}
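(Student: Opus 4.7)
The plan is to directly combine the two main results already established in this subsection: the identification \(\GCov(D^{*}) = \rngW[n]{K}/\wp(\rngW[n]{K})\) coming from Artin--Schreier--Witt theory, and \cref{lem:representative-Witt-vector}, which picks out a canonical representative in each coset.

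First I would unwind the definition of \({(\setRP_{k})}^{n}\): a tuple \((g_{0}, g_{1}, \dotsc, g_{n-1})\) of representative polynomials is, by the definition given just after \cref{lem:representative-Witt-vector}, exactly a representative Witt vector in \(\rngW[n]{K}\). So \({(\setRP_{k})}^{n}\) embeds naturally as a subset of \(\rngW[n]{K}\).

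Next I would define the map \({(\setRP_{k})}^{n} \to \GCov(D^{*})\) by sending \(\vect{g}\) to the class \([\vect{g}] \in \rngW[n]{K}/\wp(\rngW[n]{K})\), and then to the corresponding \'etale \(G\)-cover under the chain of identifications displayed before \cref{prop:cyclic-ext}. Surjectivity is the existence part of \cref{lem:representative-Witt-vector}: any \(\vect{f} \in \rngW[n]{K}\) is \(\wp\)-equivalent to some representative Witt vector \(\vect{g}\), so every cover arises this way. Injectivity is the uniqueness part of the same lemma: if two representative Witt vectors \(\vect{g}, \vect{g}'\) satisfy \(\vect{g} - \vect{g}' \in \wp(\rngW[n]{K})\), then applying \cref{lem:representative-Witt-vector} to \(\vect{f} = \vect{g}\) (which is its own representative Witt vector) forces \(\vect{g}' = \vect{g}\).

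There is no real obstacle here; the corollary is a repackaging of \cref{lem:representative-Witt-vector} after translating through the Artin--Schreier--Witt identification. The only thing to be careful about is verifying that the identifications \((\heartsuit)\) and \((\clubsuit)\) are indeed bijections of sets (not merely of groups with extra structure), which the references cited already guarantee.
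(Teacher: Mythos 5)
Your proposal is correct and matches the paper's (implicit) argument: the corollary is stated without proof precisely because it is the immediate combination of the identification \(\GCov(D^{*}) = \rngW[n]{K}/\wp(\rngW[n]{K})\) with the existence and uniqueness assertions of \cref{lem:representative-Witt-vector}, exactly as you describe. Your surjectivity/injectivity unpacking and the remark about \((\heartsuit)\) and \((\clubsuit)\) being set-level bijections are sound.
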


\begin{remark}\label{rem:moduli}
  The corollary shows that \(\GCov(D^{*})\) is identified with the \(k\)-point set of the
  ind-scheme \(\schA^{\infty}_k:=\varinjlim_{n \in \setN} \schA^{n}_k \), where the transition
  map \(\schA^{n}_k \to \schA^{n+1}_k\) is the standard closed embedding. In fact, the coarse
  moduli space of \(\GCov(D^{*})\) is the inductive perfection (that is, the inductive limit
  with respect to Frobenius morphisms) of this space \(\schA^{\infty}_k\),
  see~\cite{Harbater1980:Moduli}.  To get the fine moduli stack, we further need to take the
  product of it with the stack \(\mathrm{B}G\), see \cite{Tonini2017:Moduli}.
\end{remark}

\subsection{Stratification and parameterization}\label{sec:strat-pram}
In what follows, we follow the convention that \(\ord 0 = \infty\).  For a Witt vector
\(\vect{f} = {(f_{l})}_{l} \in \rngW[n]{K}\), we denote the vector
\(\ord \vect{f} \coloneq{ (\ord f_{l})}_{l}\).  When \(E^{*}\) is a \(G\)-cover of \(D^{*}\)
corresponding to the representative Witt vector \(\vect{f}\), we denote
\(\ord E^{*} = \ord \vect{f}\).

\begin{definition}
  For an \(n\)-tuple \(\vect{j} ={(j_{l})}_{l} \in {(\setN' \cup \{- \infty\})}^{n}\), set
  \(- \vect{j} = {(-j_{l})}_{l}\).  We define
  \begin{align}
    \GCov(D^{*}; \vect{j}) &\coloneq
      \left\{E^{*} \in \GCov(D^{*}) \mid \ord E^{*} = - \vect{j}\right\}, \\
    \setRP_{k, \vect{j}} &\coloneq
      \prod_{l=0}^{n-1} \left\{f \in \setRP_{k} \mid \ord f = - j_{l}\right\}
  \end{align}
  For the case \(\vect{j} = (j_{0})\), we write \(\setRP_{k, j_{0}}\) in stead of
  \(\setRP_{k, (j_{0})}\).
\end{definition}

\begin{remark}
  We remark that we consider \(\setN' \cup \{0\}\) instead of \(\setN' \cup \{- \infty\}\) in
  the previous paper~\cite{Yasuda2014:p-Cyclic}.  However, our convention in the present paper
  is more suitable for computation below.
\end{remark}

When \(n = 1\), we have the following one-to-one correspondences
(see~\cite[10, Proposition~2.11]{Yasuda2014:p-Cyclic})
\begin{equation}
  \GCov(D^{*}; j) \leftrightarrow \setRP_{k, j}
    \leftrightarrow k^{\times} \times k^{j - 1 - \floor{j/p}}.
\end{equation}

Here \(\floor{\bullet}\) denotes the floor function, which assigns a real number \(a\) to the
greatest integer \(\floor{a}\) less than or equal to \(a\).  When \(j = - \infty \), the space
\(\GCov(D^{*}; -\infty) \) is a point.  The following is straightforward.

\begin{proposition}
  For \(\vect{j} = {(j_{l})}_{l} \in {(\setN'\cup \{-\infty\})}^{n}\),
  we have one-to-one correspondences
  \begin{equation}
    \GCov(D^{*}; \vect{j}) \leftrightarrow
    \setRP_{k, \vect{j}} \leftrightarrow
    \prod_{j_l\ne -\infty} \left(k^{\times} \times k^{j_{l} - 1 - \floor{j_{l}/p}}\right).
  \end{equation}
\end{proposition}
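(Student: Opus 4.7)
The plan is to unpack the proposition as the combination of two previously established ingredients: the Artin--Schreier--Witt parameterization of $G$-covers by representative Witt vectors, and the known $n = 1$ parameterization from \cite[Proposition~2.11]{Yasuda2014:p-Cyclic}. Neither bijection hides a real difficulty; the content of the proof is really just book-keeping across components and handling the $j_{l} = -\infty$ case.

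First, I would establish $\GCov(D^{*}; \vect{j}) \leftrightarrow \setRP_{k, \vect{j}}$. By \cref{lem:representative-Witt-vector} and the subsequent corollary, every $E^{*} \in \GCov(D^{*})$ is represented by a unique representative Witt vector $\vect{g} = (g_{0}, \dotsc, g_{n-1}) \in {(\setRP_{k})}^{n}$, and the assignment $E^{*} \mapsto \vect{g}$ is a bijection $\GCov(D^{*}) \leftrightarrow {(\setRP_{k})}^{n}$. By the convention fixed just before the definition, $\ord E^{*} = \ord \vect{g} = (\ord g_{0}, \dotsc, \ord g_{n-1})$, so $E^{*} \in \GCov(D^{*}; \vect{j})$ if and only if $\ord g_{l} = -j_{l}$ for every $l$; restricting the bijection to this locus gives precisely $\GCov(D^{*}; \vect{j}) \leftrightarrow \setRP_{k, \vect{j}}$.

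Next, I would prove $\setRP_{k, \vect{j}} \leftrightarrow \prod_{j_{l} \neq -\infty} (k^{\times} \times k^{j_{l} - 1 - \floor{j_{l}/p}})$ componentwise. By definition, $\setRP_{k, \vect{j}} = \prod_{l = 0}^{n-1} \{f \in \setRP_{k} \mid \ord f = -j_{l}\}$, so it suffices to parameterize each factor separately. If $j_{l} = -\infty$, then by our convention $\ord f = \infty$ forces $f = 0$ (the unique representative polynomial of zero order-part), so this factor is a single point and can be dropped from the product. If $j_{l} \in \setN'$, then an element $f$ of $\setRP_{k}$ with $\ord f = -j_{l}$ is exactly a Laurent polynomial $\sum_{i \in \setN',\, 1 \leq i \leq j_{l}} g_{-i} t^{-i}$ with $g_{-j_{l}} \in k^{\times}$. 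The free coefficients are indexed by $i \in \setN'$ with $1 \leq i < j_{l}$; their number equals $(j_{l} - 1) - \floor{(j_{l} - 1)/p}$, and since $p \nmid j_{l}$ one has $\floor{(j_{l} - 1)/p} = \floor{j_{l}/p}$, giving the stated dimension $j_{l} - 1 - \floor{j_{l}/p}$. This reproduces the $n = 1$ bijection of \cite[Proposition~2.11]{Yasuda2014:p-Cyclic}, and taking the product over $l$ completes the proof.

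The main (and only) subtlety will be the treatment of $j_{l} = -\infty$: the shift from the earlier convention $\setN' \cup \{0\}$ to $\setN' \cup \{-\infty\}$ (as flagged in the remark) means I must check carefully that this index simply suppresses the $l$-th factor of representative polynomials while still contributing a single $G$-cover (namely the one whose $l$-th component of the representative Witt vector is $0$). Beyond this, everything reduces to citing the two prior results.
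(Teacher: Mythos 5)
Your proposal is correct and is essentially the argument the paper leaves implicit: the paper declares the proposition ``straightforward'' immediately after recalling the $n=1$ correspondence $\GCov(D^{*}; j) \leftrightarrow \setRP_{k,j} \leftrightarrow k^{\times}\times k^{j-1-\floor{j/p}}$, and your proof simply restricts the bijection $\GCov(D^{*})\leftrightarrow(\setRP_{k})^{n}$ to the order-$(-\vect{j})$ locus and counts coefficients componentwise, including the correct treatment of the $j_{l}=-\infty$ factor as a point. The coefficient count $(j_{l}-1)-\floor{(j_{l}-1)/p}=j_{l}-1-\floor{j_{l}/p}$ using $p\nmid j_{l}$ is right.
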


We now regard \(k^{\times} \times k^{n}\) as the variety
\(\schG_{m, k} \times \schA_{k}^{n}\).  Then the above correspondence gives a structure of
variety to \(\GCov(D^{*}; \vect{j})\).  Thus, \(\GCov(D^{*}; \vect{j})\) can be thought of as
an infinite-dimensional space admitting the stratification
\begin{equation}
  \GCov(D^{*}) = \coprod_{\vect{j}} \GCov(D^{*}; \vect{j})
\end{equation}
into countable finite-dimensional strata.

\begin{remark}
  Varieties \(\schG_{m, k} \times \schA_{k}^{n}\) are neither fine or coarse moduli spaces of
  \(G\)-covers (see \cref{rem:moduli}).  However we can construct families of \(G\)-covers
  over these spaces in a similar way as in \cite[Section~2.4]{Yasuda2014:p-Cyclic} and get
  morphisms from these spaces to the corresponding fine moduli stacks which are bijective on
  geometric points.  Thus, as justified in \cite{Tonini2019:Moduli}, we can use the above
  varieties as our parameter spaces of \(G\)-covers in our context of motivic integration.
\end{remark}

\subsection{Explicit description of \texorpdfstring{\(G\)}{G}-actions on
  \texorpdfstring{\(G\)}{G}-covers}\label{sec:G-actions}
Let \(\vect{f} = {(f_{l})}_{l} \in \setRP_{k, \vect{j}}\) be a representative Witt vector of
order \(\ord \vect{f} = - \vect{j}\) and \(\vect{g} = {(g_{l})}_{l}\) a root of
\(\wp(\vect{g}) = \vect{f}\).  We assume that the extension \(L = K(\vect{g})\) is a
\(G\)-extension of \(K\) and that the generator \(\sigma\) of \(G\) acts on \(L\) by
\(\sigma(\vect{g}) = \vect{g} + \vect{1}\).  We can decompose the extension \(L/K\) into a
tower of \(p\)-cyclic extensions
\begin{equation}
  L = K_{n-1} \supset K_{n-2} \supset \dotsb \supset K_{0} \supset K_{-1} = K
\end{equation}
where \(K_{i} = K_{i-1}(g_{i})\).  Indeed, \(\sigma^{p^{i}}|_{K_{i}}\) fixes \(K_{i-1}\) and
its order is \(p\).  For each extension \(K_{i}/K_{i-1}\), \(g_{i}\) is a root of an equation
\begin{equation}
  g_{i}^{p} - g_{i} + (\text{polynomial in \(g_{0}, g_{1}, \dotsc, g_{i-1}\)}) = f_{i}.
\end{equation}

We denote by \(v_{K_{i}}\) the normalized valuation on \(K_{i}\).  For each \(i\), there
exists an \(h_{i} \in K_{i-1}\) such that
\(\tilde{f}_{i} = {(g_{i} + h_{i})}^{p} - (g_{i} + h_{i})\),
\(p \nmid v_{K_{i-1}}(\tilde{f}_{i})\) and \(v_{K_{i - 1}}(\tilde{f}_{i}) < 0\).  We set
\(\tilde{g}_{i} \coloneq g_{i} + h_{i}\) and
\(\tilde{\vect{g}} \coloneq {(\tilde{g}_{l})}_{l}\).  Since \(\tilde{g}_{l}^{i_{l}}\)
(\(0 \leq i_{l} < p\)) form a basis of \(K_{l} / K_{l - 1}\), thus
\(\tilde{g}_{0}^{i_{0}} \tilde{g}_{1}^{i_{1}} \dotsm \tilde{g}_{n-1}^{i_{n-1}}\)
(\(0 \leq i_{0}, i_{1}, \dotsc , i_{n-1} < p\)) form a basis of \(L/K\).

\begin{notation}
  For a \(k\)-algebra \(M\) endowed with a \(G\)-action, we denote
  \(\delta \coloneq \sigma - \mathrm{id}_{M}\) a \(k\)-linear operator.
  For \(d \in \setZ_{\geq 0}\),
  we write \(M^{\delta^{d} = 0} \coloneq \Ker (\delta^{d} \colon M \to M)\).
\end{notation}

For an \(n\)-tuple \(I = (i_{0}, i_{1}, \dotsc, i_{n-1}) \in {\{0, 1, \dotsc, p-1\}}^{n}\), we
use a multi-index notation
\(\tilde{\vect{g}}^{I} = \tilde{g}_{0}^{i_{0}} \tilde{g}_{1}^{i_{1}} \dotsm %
\tilde{g}_{n-1}^{i_{n-1}}\).  We remark that to give an \(n\)-tuple \(I = {(i_{l})}_{l}\) is
equivalent to give an integer \(a_{I} = \sum_{l=0}^{n-1} i_{l} p^{l}\).

\begin{proposition}\label{prop:basis-ker-delta}
  For any integer \(a_{I}\) with \(1 \leq a_{I} < p^{n}\) and for any \(h \in K\), we have
  \(\delta^{a_{I}}(\tilde{\vect{g}}^{I} h) \in k^{\times} \cdot h\) and
  \(\delta^{a_{I} + 1}(\tilde{\vect{g}}^{I} h) = 0\).  Therefore, for each integer \(d\) with
  \(0 \leq d \leq p^{n}\), we have
  \begin{equation}
    L^{\delta^{d} = 0} = \bigoplus_{a_{I} = 0}^{d-1} K \cdot \tilde{\vect{g}}^{I}.
  \end{equation}
\end{proposition}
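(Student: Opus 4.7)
The plan is to prove the displayed formula first, by induction on $n$, and then deduce the basis statement from it. Since $h \in K = L^\sigma$, multiplication by $h$ commutes with $\delta$, so $\delta^a(\tilde{\vect{g}}^I h) = h \cdot \delta^a(\tilde{\vect{g}}^I)$, and it suffices to show $\delta^{a_I}(\tilde{\vect{g}}^I) \in k^\times$ and $\delta^{a_I + 1}(\tilde{\vect{g}}^I) = 0$.

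The essential technical input is the characteristic-$p$ identity $\delta^{p^l} = (\sigma - 1)^{p^l} = \sigma^{p^l} - 1$, together with two facts about $\sigma^{p^l}$: first, it fixes $K_{l-1}$ pointwise, since it has order $p^{n-l}$ in $\Gal(L/K)$ and $[L : K_{l-1}] = p^{n-l}$; second, $\sigma^{p^l}(\tilde g_l) = \tilde g_l + 1$. The second fact I would derive from $\sigma(\vect{g}) = \vect{g} + \vect{1}$, which iterates to $\sigma^{p^l}(\vect{g}) = \vect{g} + V^l(\vect{1})$, by a direct Witt-polynomial computation: the first $l$ entries of $V^l(\vect{1})$ vanish, so carries below index $l$ do not contribute and the $l$-th component of the sum reads $g_l + 1$; the correction $h_l$ lies in $K_{l-1}$ and is therefore fixed by $\sigma^{p^l}$, so the same identity passes to $\tilde g_l = g_l + h_l$.

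The inductive step is then clean. Write $I = (I', i_{n-1})$ with $I' = (i_0, \dotsc, i_{n-2})$, so that $\tilde{\vect{g}}^I = \tilde{\vect{g}}^{I'} \cdot \tilde g_{n-1}^{i_{n-1}}$ and $a_I = a_{I'} + i_{n-1} p^{n-1}$. The operator $\delta^{p^{n-1}} = \sigma^{p^{n-1}} - 1$ fixes $K_{n-2}$ and therefore commutes with multiplication by $\tilde{\vect{g}}^{I'}$; on the remaining factor it acts as the forward-difference operator in $\tilde g_{n-1}$, and the classical identity $\Delta^{i_{n-1}}(x^{i_{n-1}}) = i_{n-1}!$ (nonzero in $\setF_p$ because $i_{n-1} < p$) collapses the factor $\tilde g_{n-1}^{i_{n-1}}$, yielding $\delta^{i_{n-1} p^{n-1}}(\tilde{\vect{g}}^I) = i_{n-1}! \cdot \tilde{\vect{g}}^{I'}$. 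Applying $\delta^{a_{I'}}$ and invoking the induction hypothesis for the sub-tower $K_{n-2} \supset \dotsb \supset K_{-1} = K$, whose associated data are the truncations of the present ones, gives $\delta^{a_I}(\tilde{\vect{g}}^I) = i_{n-1}! \cdot \delta^{a_{I'}}(\tilde{\vect{g}}^{I'}) \in k^\times$; the vanishing $\delta^{a_I + 1}(\tilde{\vect{g}}^I) = 0$ then follows because $\delta$ annihilates $k$.

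For the ``Therefore'' statement, the $d$ monomials $\tilde{\vect{g}}^I$ with $a_I < d$ all lie in $L^{\delta^d = 0}$ (since $a_I + 1 \leq d$ and $\delta^{a_I + 1}(\tilde{\vect{g}}^I) = 0$) and are $K$-linearly independent as part of a $K$-basis of $L$, so it suffices to verify $\dim_K L^{\delta^d = 0} = d$. The operator $\delta$ is $K$-linear with $\delta^{p^n} = 0$, and the case $I = (p-1, \dotsc, p-1)$ of the formula just proved shows $\delta^{p^n - 1} \neq 0$; since $\dim_K L = p^n$, Jordan normal form forces $\delta$ to be a single Jordan block of size $p^n$, whence $\dim_K \ker(\delta^d) = d$. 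The main obstacle in the plan is pinning down the Witt-vector identity $\sigma^{p^l}(\tilde g_l) = \tilde g_l + 1$; once that is in hand, everything else is forward-difference combinatorics and elementary linear algebra.
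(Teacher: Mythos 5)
Your proof is correct, and the core computation coincides with the paper's: the same identities \(\delta^{p^{l}} = \sigma^{p^{l}} - \mathrm{id}\), the \(K_{l-1}\)-linearity of \(\delta^{p^{l}}\), the relation \(\sigma^{p^{l}}(\tilde{g}_{l}) = \tilde{g}_{l} + 1\), and the forward-difference collapse \((\delta^{p^{l}})^{i_{l}}(\tilde{g}_{l}^{i_{l}}) = i_{l}!\), assembled by induction on the length of the tower. You go a bit further than the paper in one place, deriving \(\sigma^{p^{l}}(\tilde{g}_{l}) = \tilde{g}_{l} + 1\) from \(\sigma(\vect{g}) = \vect{g} + \vect{1}\) via \(p^{l}\cdot\vect{1} = V^{l}(\vect{1})\) and the shape of the Witt addition polynomials, where the paper simply invokes the Artin--Schreier--Witt theory; that is a worthwhile detail to record. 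Where you genuinely diverge is the deduction of \(L^{\delta^{d}=0} = \bigoplus_{a_{I}=0}^{d-1} K\cdot\tilde{\vect{g}}^{I}\): the paper argues by induction on \(d\), peeling off the top coefficient \(x_{J}\) with \(a_{J} = d-1\) using the first assertion, whereas you observe that \(\delta\) is a nilpotent \(K\)-linear operator on the \(p^{n}\)-dimensional space \(L\) with \(\delta^{p^{n}} = 0\) and \(\delta^{p^{n}-1} \neq 0\) (witnessed by \(I = (p-1,\dotsc,p-1)\)), hence a single Jordan block, so \(\dim_{K}\Ker\delta^{d} = d\) and the \(d\) independent monomials already found must span the kernel. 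Your dimension count is arguably cleaner and makes the equality of both inclusions transparent in one stroke; the paper's induction is more hands-on but avoids any appeal to normal forms. Both are complete proofs.
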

\begin{proof}
  The case \(n = 1\) is~\cite[Lemma~2.15]{Yasuda2014:p-Cyclic}.  By direct computation, we get
  \(\delta^{p^{m}} = \sigma^{p^{m}} - \mathrm{id}\) for \(0 \leq m \leq n\).  The
  Artin--Schreier--Witt theory says that \(\sigma^{p^{m}}\) fixes the subfield
  \(K_{m - 1} = K(\tilde{g}_{0}, \tilde{g}_{1} , \dotsc, \tilde{g}_{m-1})\) and that
  \(\sigma^{p^{m}}(\tilde{g}_{m}) = \tilde{g}_{m} + 1\).  Furthermore, \(\delta^{p^{m}}\) is
  not only \(k\)-linear but also \(K_{m-1}\)-linear.  For \(1 \leq i_{m} < p\),
  \begin{align}
    \delta^{p^{m}}(\tilde{g}_{m}^{i_{m}})
      &= {(\tilde{g}_{m} + 1)}^{i_{m}} - \tilde{g}_{m}^{i_{m}} \\
      &= i_{m} \tilde{g}_{m}^{i_{m}-1} + \dotsb + i_{m} \tilde{g}_{m} + 1.
  \end{align}
  Applying \({(\delta^{p^{m}})}^{i_{m} - 1}\) to this, by the induction on \(i_{m}\), we get
  \begin{equation}
    {(\delta^{p^{m}})}^{i_{m}}(\tilde{g}_{m}^{i_{m}})
      = i_{m} \cdot {(\delta^{p^{m}})}^{i_{m}-1}(\tilde{g}_{m}^{i_{m}-1})
  \end{equation}
  and hence \({(\delta^{p^{m}})}^{i_{m}}(\tilde{g}_{m}^{i_{m}}) = i_{m} !\). 
  Then we have
  \begin{align}
    \delta^{i_{0} + i_{1}p + \dotsb + i_{n}p^{n}}\left(\tilde{g}_{0}^{i_{0}}
      \tilde{g}_{1}^{i_{1}} \dotsm \tilde{g}_{n}^{i_{n}} h\right)
    &= \delta^{i_{0} + \dotsb + i_{n-1}p^{n-1}}\left( {\left(\delta^{p^{n}}\right)}^{i_{n}}
      \left(
      \tilde{g}_{0}^{i_{0}} \dotsm \tilde{g}_{n-1}^{i_{n-1}} \tilde{g}_{n}^{i_{n}} h\right)
      \right) \\
    &= \delta^{i_{0} + \dotsb + i_{n-1}p^{n-1}}\left(
      \tilde{g}_{0}^{i_{0}} \dotsm \tilde{g}_{n-1}^{i_{n-1}} \cdot i_{n}! \, h 
      \right),
  \end{align}
  and the first assertion follows from the induction on \(n\).

  It is clear that
  \(L^{\delta^{p^{n}} = 0} = L = \bigoplus_{a_{I} = 0}^{p^{n} - 1} K \cdot
  \tilde{\vect{g}}^{I}\) holds.  Assume
  \(x = \sum_{I} x_{I} \tilde{\vect{g}}^{I} \in L^{\delta^{d - 1} = 0}\).  Since
  \(L^{\delta^{d - 1} = 0} \subset L^{\delta^{d} = 0}\), thus we have \(x_{I} = 0\) for
  \(a_{I} \geq d\) by the induction on \(d\).  From the first assertion, we have
  \(\delta^{d - 1}(x) = \delta^{d - 1}(x_{J} \tilde{\vect{g}}^{J}) = 0\), where \(J\) is the
  index satisfying \(a_{J} = d - 1\).  Again from the first assertion, this shows
  \(x_{J} = 0\).  Therefore, we have
  \(L^{\delta^{d - 1} = 0} \subset \bigoplus_{a_{I} = 0}^{d - 2} K \cdot
  \tilde{\vect{g}}^{I}\).  The converse also follows form the first assertion.
\end{proof}

\begin{corollary}\label{cor:basis-of-ker-delta}
  We denote by \(\intO_{L}\) the integer ring of \(L\) and \(v_{L}\) the normalized valuation
  on \(L\).  For an \(n\)-tuple
  \(I = (i_{0}, i_{1}, \dotsc, i_{n-1}) \in {\{0, 1, \dotsc, p-1\}}^{n}\),
  we put \(n_{I} \coloneq \ceil{- v_{L}(\tilde{\vect{g}}^{I})/{p^{n}}}\).  Here
  \(\ceil{\bullet}\) denotes the ceiling function, which assigns a real number \(a\) to the
  least integer \(\ceil{a}\) greater than or equal to \(a\).  Then we have
  \begin{equation}
    \intO_{L}
      = \prod_{v_{L}(\tilde{\vect{g}}^{I} t^{n}) \geq 0} k \cdot \tilde{\vect{g}}^{I} t^{n}
      = \bigoplus_{I} k[[t]] \cdot \tilde{\vect{g}}^{I} t^{n_{I}}.
  \end{equation}
  Moreover, for each integer \(d\) with \(0 \leq d \leq p^{n}\), we have
  \begin{equation}
    \intO_{L}^{\delta^{d}=0}
      = \prod_{\substack{v_{L}(\tilde{\vect{g}}^{I} t^{n}) \leq 0, \\%
                                      0 \leq a_{I} < d}} k \cdot \tilde{\vect{g}}^{I} t^{n}
      = \bigoplus_{0 \leq a_{I} < d} k[[t]] \cdot \tilde{\vect{g}}^{a_{I}} t^{n_{I}}.
  \end{equation}
\end{corollary}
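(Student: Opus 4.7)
The plan is to combine \cref{prop:basis-ker-delta} with a detailed analysis of the valuations \(v_{L}(\tilde{\vect{g}}^{I})\) for \(I \in {\{0, 1, \dotsc, p - 1\}}^{n}\), whose key property is that they hit each residue class modulo \(p^{n}\) exactly once. To establish this, I would proceed inductively along the tower \(L = K_{n - 1} \supset \dotsb \supset K_{-1} = K\). At each step, the Artin--Schreier equation satisfied by \(\tilde{g}_{l}\) has \(v_{K_{l - 1}}(\tilde{f}_{l}) < 0\) and \(p \nmid v_{K_{l - 1}}(\tilde{f}_{l})\), which forces \(K_{l}/K_{l - 1}\) to be totally ramified of degree \(p\) with \(v_{K_{l}}(\tilde{g}_{l}) = v_{K_{l - 1}}(\tilde{f}_{l})\) (in the new normalized valuation). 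Therefore \(v_{L}(\tilde{g}_{l}) = - p^{n - 1 - l} m_{l}\) for some \(m_{l}\) coprime to \(p\), and \(v_{L}(\tilde{\vect{g}}^{I}) = - \sum_{l} i_{l} p^{n - 1 - l} m_{l}\). Reading this sum modulo \(p, p^{2}, \dotsc, p^{n}\) in turn recovers \(i_{n - 1}, i_{n - 2}, \dotsc, i_{0}\) uniquely, proving that the \(p^{n}\) valuations are pairwise distinct modulo \(p^{n}\).

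Once this is established, \(n_{I} = \ceil{- v_{L}(\tilde{\vect{g}}^{I}) / p^{n}}\) is characterized as the smallest integer \(m\) with \(v_{L}(\tilde{\vect{g}}^{I} t^{m}) \geq 0\) (using \(v_{L}(t) = p^{n}\)), and the residues of \(v_{L}(\tilde{\vect{g}}^{I} t^{n_{I}})\) modulo \(p^{n}\) are exactly \(\{0, 1, \dotsc, p^{n} - 1\}\). Since \(\intO_{L}\) is a free \(k[[t]]\)-module of rank \(p^{n}\), the usual lowest-valuation-term peeling argument then shows that the elements \(\tilde{\vect{g}}^{I} t^{n_{I}}\) form a \(k[[t]]\)-basis of \(\intO_{L}\), giving the second equality. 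The first equality follows by expanding each summand via \(k[[t]] = \bigoplus_{m \geq 0} k \cdot t^{m}\) and collecting the resulting \(k\)-lines with non-negative valuation.

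For the \(\delta^{d} = 0\) part, I would intersect the decomposition of \(\intO_{L}\) with the \(K\)-subspace \(L^{\delta^{d} = 0} = \bigoplus_{a_{I} < d} K \cdot \tilde{\vect{g}}^{I}\) from \cref{prop:basis-ker-delta}. Because the \(\intO_{L}\)-summands are indexed directly by \(I\), this intersection simply keeps those summands with \(a_{I} < d\), yielding the asserted formula.

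The main obstacle is the first step: verifying that the normalized valuations of the \(\tilde{\vect{g}}^{I}\) sweep out all residues mod \(p^{n}\). One must carefully track how the representative replacement \(g_{l} \mapsto \tilde{g}_{l} = g_{l} + h_{l}\) produces the expected coprime-to-\(p\) behavior of \(v_{K_{l - 1}}(\tilde{f}_{l})\) at every level of the tower, and how these behaviors combine across the levels via the ramification indices \(p^{n - 1 - l}\) to give the claimed residue structure. The rest of the argument is a more or less routine assembly, once this valuation dictionary is in hand.
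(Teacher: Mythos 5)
Your proposal is correct and follows essentially the same route as the paper: establish that the valuations \(v_{L}(\tilde{\vect{g}}^{I})\) are pairwise distinct modulo \(p^{n}\) via the base-\(p\) digit argument coming from \(p \nmid v_{K_{l}}(\tilde{g}_{l})\), deduce the \(k[[t]]\)-basis of \(\intO_{L}\), and then intersect with the decomposition of \(L^{\delta^{d}=0}\) from \cref{prop:basis-ker-delta}. The only difference is that you spell out details the paper compresses into two sentences; the ``main obstacle'' you flag (that \(p \nmid v_{K_{l-1}}(\tilde{f}_{l})\) and \(v_{K_{l-1}}(\tilde{f}_{l})<0\) at every level) is already built into the standing setup of \cref{sec:G-actions}, so it is not an additional gap to close.
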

\begin{proof}
  By definition, \(v_{K_{l}}(\tilde{g}_{l}^{i_{l}})\) takes distinct values modulo \(p\) when
  \(i_{l}\) runs from \(0\) to \(p - 1\).  Therefore, \(v_{L}(\tilde{\vect{g}}^{I})\) takes
  distinct values modulo \(p^{n}\) when \(a_{I}\) runs from \(0\) to \(p^{n} - 1\).  This
  proves the first assertion.  The second assertion follows from \cref{prop:basis-ker-delta}
  and the first assertion.
\end{proof}

\section{\texorpdfstring{\(\bm{v}\)}{v}-functions}\label{sec:v-function}
Suppose that we are given a faithful \(k\)-linear action of \(G\) on \(\schA_{k}^{d}\).  Let
\(E\) be a \(G\)-cover of \(D = \Spec k[[t]]\), which means the normalization of \(D\) in a
\(G\)-cover \(E^{*}\) of \(D^{*} = \Spec K\).  Let \(\shfO_{E}\) be the coordinate ring of
\(E\).  Then the direct sum \(\shfO_{E}^{\oplus d}\) has two \(G\)-actions.  One is the
diagonal action induced from the \(G\)-action on \(\shfO_{E}\).  The other is given by the
composition \(G \to GL(d, k) \hookrightarrow GL(d, \shfO_{E})\), where the left map is
associated to the \(G\)-action on \(\schA_{k}^{d}\).

\begin{definition}\label{def:tuning-module}
  We define the \emph{tuning module} \(\Xi_{E} \subset \shfO_{E}^{\oplus d}\) to be the
  submodule of elements on which the two actions above coincide.
\end{definition}

\begin{lemma}[{\cite[Proposition~6.3]{Yasuda2017:Toward}}]
  The tuning module \(\Xi_{E}\) is a free \(k[[t]]\)-module of rank \(d\).
\end{lemma}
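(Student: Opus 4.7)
The plan is to exhibit \(\Xi_{E}\) as a finitely generated torsion-free module over the discrete valuation ring \(k[[t]]\) and then compute its rank by tensoring with \(K = k((t))\).

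First I would observe that \(\shfO_{E}\) is a free \(k[[t]]\)-module of rank \(|G| = p^{n}\). For a connected \(G\)-cover this is because \(\shfO_{E}\) is the integral closure of \(k[[t]]\) in the degree-\(p^{n}\) extension \(L/K\), hence a torsion-free finitely generated module over the DVR \(k[[t]]\) of the correct generic rank; an explicit \(k[[t]]\)-basis is in fact already provided by \cref{cor:basis-of-ker-delta}. The non-connected case follows by decomposing \(\shfO_{E}\) as a product over its connected components. Consequently \(\shfO_{E}^{\oplus d}\) is free of rank \(d p^{n}\), and the submodule \(\Xi_{E}\) is itself finitely generated and torsion-free over \(k[[t]]\), so it is free of some finite rank \(r\).

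Next I would compute \(r\) by passing to the generic fiber. Setting \(L \coloneq \shfO_{E} \otimes_{k[[t]]} K\), the space \(\Xi_{E} \otimes_{k[[t]]} K\) is the \(K\)-subspace of \(L^{\oplus d} \cong L \otimes_{k} V\), with \(V = k^{d}\) the given representation, on which the natural \(G\)-action on \(L\) and the representation action on \(V\) coincide. Equivalently, it is the fixed subspace in \(L \otimes_{k} V\) for the ``antidiagonal'' action \(h \star (l \otimes v) = h(l) \otimes h^{-1} v\).

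The key tool is the normal basis theorem for \(G\)-Galois algebras, which yields a \(K[G]\)-module isomorphism \(L \cong K[G]\). (For a non-connected cover, with each connected component stabilized by a subgroup \(H \subset G\), one applies the theorem to each component and induces up to \(G\).) Since \(G\) is abelian, the \(K\)-linear automorphism \(\phi \colon K[G] \otimes_{k} V \to K[G] \otimes_{k} V\) defined by \(\phi(g \otimes v) = g \otimes g v\) intertwines the antidiagonal action with the action on the first factor alone. Fixed points of the latter are \({(K[G])}^{G} \otimes V = K \cdot N \otimes V\), where \(N \coloneq \sum_{g \in G} g\), and this has \(K\)-dimension \(d\). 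Therefore \(r = d\). The main potential obstacle is the clean invocation of the normal basis theorem in the possibly non-connected setting, but the reduction sketched above handles it; an alternative that avoids the normal basis theorem entirely is to construct an explicit \(k[[t]]\)-basis of \(\Xi_{E}\) directly from the basis of \(\shfO_{E}\) in \cref{cor:basis-of-ker-delta}, handling indecomposable summands of \(V\) one at a time.
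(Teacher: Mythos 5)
Your argument is correct. The paper itself gives no proof of this lemma---it is quoted from \cite[Proposition~6.3]{Yasuda2017:Toward}---and the proof there runs along essentially the same lines as yours: \(\Xi_{E}\) is a submodule of a finite free module over the discrete valuation ring \(k[[t]]\), hence free, and its rank is computed after base change to \(K\) via the normal basis theorem for the Galois algebra \(L\), using the untwisting automorphism \(g \otimes v \mapsto g \otimes gv\) (which uses only that \(G\) acts on \(V\), not commutativity in any essential way beyond what you invoke). The one step worth stating explicitly is that forming the fixed submodule commutes with the flat base change \(k[[t]] \to K\), since \(\Xi_{E}\) is the kernel of the map \(x \mapsto (\sigma x - \rho(\sigma)x)_{\sigma}\); with that noted, your proof is complete.
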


\begin{definition}\label{def:v-function}
  We define the \(\bm{v}\)-function \(\bm{v} \colon \GCov(D) \to \setQ, E \mapsto \bm{v}(E)\)
  as follows.  Let \(x_{i} = {(x_{ij})}_{1 \leq j \leq d} \in \shfO_{E}^{\oplus d}\)
  (\(1 \leq i \leq d\)) be a \(k[[t]]\)-basis of \(\Xi_{E}\).  Then we define
  \begin{align}
    \bm{v}(E)
    &= \frac{1}{\# G} \length \frac{\shfO_{E}}{(\det {(x_{ij})}_{i, j})} \\
    &= \frac{1}{\# G}
       \length \frac{\shfO_{E}^{\oplus d}}{\shfO_{E} \cdot \Xi_{E}}.
  \end{align}
\end{definition}

By abuse of notation, we write \(\Xi_{E^{*}} = \Xi_{E}\) and \(\bm{v}(E^{*}) = \bm{v}(E)\),
because \(E\) is the normalization of \(D\) in \(E^{*}\).

The \(\bm{v}\)-function depends on the given \(G\)-representation. We sometimes write the
\(\bm{v}\)-function as \(\bm{v}_{V}\), referring to the representation \(V\) in question.  If
\(E\) is connected and \(v\) denotes the normalized valuation on \(\shfO_{E}\), then we have
\begin{equation}
  \bm{v}(E) = \frac{1}{\#G} v\left(\det{(x_{ij})}_{i,j}\right).
\end{equation}
When \(E\) is not connected and \(E'\) is a connected component with the stabilizer subgroup
\(H \subset G\), then we have
\begin{equation}
  \bm{v}_{V}(E) = \bm{v}_{W}(E'),
\end{equation}
where \(W\) is the restriction of \(V\) to \(H\).

\subsection{The indecomposable case}
Let \(V\) be an indecomposable \(G\)-representation of dimension \(d\).  Since
\(\bm{v}_{V \oplus W} = \bm{v}_{V} + \bm{v}_{W}\) holds, thus the case of indecomposable
representations is essential.  We denote the coordinate ring of the affine space \(V\) by
\(k[\vect{x}] = k[x_{1}, x_{2}, \dotsc, x_{d}]\).  We choose coordinates so that the chosen
generator \(\sigma\) of \(G\) acts by
\begin{equation}
  x_{i} \mapsto
  \begin{cases}
    x_{i} + x_{i+1} & (i \neq d) \\
    x_{d} & (i = d).
  \end{cases}
\end{equation}
It amounts to taking the Jordan standard form of \(\sigma\).  We have \(d \leq p^{n}\), since
the order of a Jordan block of size \(m\) with eigenvalue \(1\) is the greatest power of \(p\)
does not exceeding \(m\).  Let \(E^{*}\) be a \(G\)-cover of \(D^{*} = \Spec K\).  We
also assume that \(E^{*} = \Spec L\), where \(L/K\) is a \(G\)-extension.  With the
notation of \cref{sec:G-actions}, the tuning module \(\Xi_{E^{*}}\) of \(E^{*}\) is written as
\begin{align}
  \Xi_{E^{*}} &= \left\{ (\alpha_{1}, \alpha_{2}, \dots, \alpha_{d}) \in \intO_{L}^{d}
                \mid
                \sigma(\alpha_{i}) = \alpha_{i} + \alpha_{i+1} (i < d),
                \sigma(\alpha_{d}) = \alpha_{d} \right\} \\
              &= \left\{ (\alpha, \delta(\alpha), \dots, \delta^{d-1}(\alpha)) \in \intO_{L}^{d}
                \mid
                \alpha \in \intO_{L}^{\delta^{d} = 0}\right\}.
\end{align}
\Cref{cor:basis-of-ker-delta} gives us a \(k[[t]]\)-basis of \(\intO_{L}^{\delta^{d}=0}\).
Then, we now have
\begin{lemma}\label{lem:v-function-valuation}
  With the notation above, we have
  \begin{align}
    \vect{v}(E^{*}) = \sum_{\substack{0 \leq i_{0} + pi_{1} + \dotsb + p^{n-1}i_{n-1} < d, \\
                                      0 \leq i_{0} , i_{1}, \dotsc, i_{n-1} < p}} %
    \ceil{- \frac{i_{0}v_{L}(\tilde{g}_{0}) + i_{1}v_{L}(\tilde{g}_{1}) +
    \dotsb + i_{n-1}v_{L}(\tilde{g}_{n-1})}
    {p^{n}}}.
  \end{align}
\end{lemma}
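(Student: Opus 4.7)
The plan is to exhibit a $k[[t]]$-basis of $\Xi_{E^{*}}$ with respect to which the matrix defining $\bm{v}(E^{*})$ is triangular, so that the determinant collapses to a product of diagonal entries whose $L$-valuations can be read off directly from \cref{cor:basis-of-ker-delta}.

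First, I would combine the given isomorphism $\intO_{L}^{\delta^{d}=0} \xrightarrow{\sim} \Xi_{E^{*}}$, $\alpha \mapsto (\alpha, \delta(\alpha), \dotsc, \delta^{d-1}(\alpha))$, with the $k[[t]]$-basis $\{\tilde{\vect{g}}^{I} t^{n_{I}} \mid 0 \le a_{I} < d\}$ of $\intO_{L}^{\delta^{d}=0}$ supplied by \cref{cor:basis-of-ker-delta}. Labelling the multi-indices so that $a_{I_{i}} = i - 1$ and setting $\alpha_{i} \coloneq \tilde{\vect{g}}^{I_{i}} t^{n_{I_{i}}}$, the vectors $x_{i} = (\alpha_{i}, \delta(\alpha_{i}), \dotsc, \delta^{d-1}(\alpha_{i}))$ for $i = 1, \dotsc, d$ form a $k[[t]]$-basis of $\Xi_{E^{*}}$, so by \cref{def:v-function} the task reduces to computing $v_{L}(\det(x_{ij}))$ with $x_{ij} = \delta^{j-1}(\alpha_{i})$.

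Next, \cref{prop:basis-ker-delta} supplies the two facts needed for triangularity. Since $t \in K$ is $G$-fixed and $\delta$ is $K$-linear, applying the proposition with $h = t^{n_{I_{i}}}$ gives $\delta^{a_{I_{i}}+1}(\alpha_{i}) = 0$ and $\delta^{a_{I_{i}}}(\alpha_{i}) \in k^{\times} \cdot t^{n_{I_{i}}}$. The first kills all entries $x_{ij}$ with $j > i$, so $(x_{ij})$ is lower triangular; the second identifies each diagonal entry, up to a unit of $k^{\times}$, with $t^{n_{I_{i}}}$. Because $k$ is algebraically closed, the extension $L/K$ is totally ramified of degree $p^{n}$, so $v_{L}(t) = p^{n}$, and hence
\begin{equation}
v_{L}(\det(x_{ij})) = p^{n} \sum_{i=1}^{d} n_{I_{i}}.
\end{equation}
Dividing by $\#G = p^{n}$ and expanding $v_{L}(\tilde{\vect{g}}^{I}) = i_{0} v_{L}(\tilde{g}_{0}) + \dotsb + i_{n-1} v_{L}(\tilde{g}_{n-1})$ together with $a_{I} = i_{0} + p i_{1} + \dotsb + p^{n-1} i_{n-1}$ reproduces the stated sum.

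The main technical point I anticipate is organising the triangularity cleanly: one has to keep track of the ordering induced by the filtration $L^{\delta^{j}=0} \subset L^{\delta^{j+1}=0}$ and to apply \cref{prop:basis-ker-delta} to the specific choice $h = t^{n_{I}}$ rather than a generic element of $K$. Once that is in place, the remaining work is a direct computation using the total ramification of $L/K$ together with the ceiling identity $n_{I} = \ceil{-v_{L}(\tilde{\vect{g}}^{I})/p^{n}}$ already built into \cref{cor:basis-of-ker-delta}.
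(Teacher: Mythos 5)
Your proposal is correct and follows essentially the same route as the paper: both use the basis \(\{\tilde{\vect{g}}^{I}t^{n_{I}}\}\) of \(\intO_{L}^{\delta^{d}=0}\) from \cref{cor:basis-of-ker-delta}, invoke \cref{prop:basis-ker-delta} to see that the matrix \((\delta^{m}(\tilde{\vect{g}}^{I}t^{n_{I}}))_{I,m}\) is triangular with diagonal entries in \(k^{\times}\cdot t^{n_{I}}\), and then compute \(\bm{v}(E^{*})=\tfrac{1}{p^{n}}v_{L}(\det)=\sum_{0\le a_{I}<d} n_{I}\). Your write-up simply makes explicit a few points the paper leaves implicit (the ordering giving triangularity, the choice \(h=t^{n_{I}}\), and \(v_{L}(t)=p^{n}\) from total ramification).
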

\begin{proof}
  Let \(n_{I}\) be an integer as in \cref{cor:basis-of-ker-delta}.  By
  \cref{prop:basis-ker-delta}, we find that the matrix
  \({\left(\delta^{m}(\tilde{\vect{g}}^{I} t^{n_{I}})\right)}_{I, m}\) is a triangular and
  that the diagonal components \(\delta^{a_{I}} (\tilde{\vect{g}}^{I} t^{n_{I}})\) are of the
  form \(ht^{n_{I}}\) (\(0\ne h \in k\)).  Then
  \begin{align}
    \vect{v}(E^{*})
    &= \frac{1}{\#G} v_{L}\left(\det{(\delta^{m}(\tilde{\vect{g}}^{I} t^{n_{I}}))}_{I, m}\right) \\
    &= \frac{1}{p^{n}} \sum_{0 \leq a_{I} < d} v_{L}(t^{n_{I}}) \\
    &= \sum_{0 \leq a_{I} < d} n_{I},
  \end{align}
  which is the desired conclusion.
\end{proof}

\subsection{Ramification jumps}\label{sec:ramification-jumps}
We next determine the values \(v_{L}(\tilde{g}_{l})\) by studying ramification of \(L/K\).  We
begin with recalling the notions of lower and upper ramification groups.  The basic reference
here is~\cite{Serre1979:Local}.  Let \(K\) be a complete discrete valuation field with the
perfect residue field of characteristic \(p > 0\).  Consider a finite Galois extension
\(L/K\).  We denote the valuation ring of \(L\) by \(\intO_{L}\) and the prime ideal of
\(\intO_{L}\) by \(\idlp_{L}\).  Put \(G \coloneq \Gal(L/K)\).

For each integer \(i \geq -1\), we set
\begin{equation}
  G_{i} \coloneq \{ \gamma \in G \mid
  \text{\(\gamma\) acts trivially on \(\intO_{L}/\idlp_{L}^{i+1}\)}\}
\end{equation}
and call it the \(i\)-th \emph{lower ramification group} of \(L/K\).
The lower ramification groups form a descending sequence \({\{G_{i}\}}_{i}\) of
normal subgroups of \(G\), and \(G_{i} = \{1\}\) for sufficiently large \(i\).

Let us next define upper ramification groups.  We put for \(t \in \setR_{\geq -1}\)
\begin{align}
  G_t &\coloneq G_{\ceil{t}}, \\
  (G_{0}: G_{t}) &\coloneq 1 \quad (t < 0).
\end{align}
We define the Hasse--Herbrand function
\(\varphi = \varphi_{L/K} \colon \setR_{\geq -1} \to \setR_{\geq -1}\) by
\begin{equation}
  \varphi_{L/K} (u) \coloneq \int_{0}^{u} \frac{dt}{(G_{0}: G_{t})}.
\end{equation}
This function \(\varphi_{L/K}\) is strictly increasing and a self-homeomorphism of
\(\setR_{\geq -1}\).  We denote \(\psi = \psi_{L/K} \coloneq \varphi_{L/K}^{-1}\).  For
\(u \geq -1\), we call \(G^{u} \coloneq G_{\psi_{L/K}(u)}\) the \(u\)-th \emph{upper
  ramification group} of \(L/K\).  As with the lower ramification groups, the upper
ramification groups form a descending sequence \(\{{G^{u}\}}_{u}\) of normal subgroups of
\(G\), and \(G^{u} = \{1\}\) for sufficiently large \(u\).

For each subgroup \(H \subset G\) of \(G\) and for each integer
\(i \geq -1\), we have
\begin{equation}\label{eq:prop:lower-rj}
  H_{i} = G_{i} \cap H.
\end{equation}
Here the filtration \({\{H_{i}\}}_{i}\) on \(H\) is induced from the \(H\)-extension derived
from \(L/K\).  Similarly, for each normal subgroup \(H \subset G\) of \(G\) and for each real
number \(u \geq -1\), we have
\begin{equation}\label{eq:prop:upper-rj}
  {(G/H)}^{u} = G^{u}H / H.
\end{equation}

We say that \(i\) is a \emph{lower ramification jump} of \(L/K\) if \(G_{i} \neq G_{i+1}\).
Also, we say that \(u\) is an \emph{upper ramification jump} of \(L/K\) if
\(G^{u} \neq G^{u+\epsilon}\) for all \(\epsilon > 0\).

We now restrict ourselves to the case of our principal interest where \(G = \setZ/p^{n}\setZ\)
and \(K = k((t))\).  From~\cite[p.~67,Corollary~2]{Serre1979:Local}, each graded piece
\(G_i/G_{i+1}\) is either \(1\) or \(\setZ/p\setZ\). Therefore, there are exactly \(n\) lower
ramification jumps and hence there are exactly \(n\) upper ramification jumps.

By definition, we can write
\begin{equation}
  \psi(u) = \int_{0}^{u} (G^{0}: G^{w}) dw.
\end{equation}
Let \(u_{0} < u_{1} < \dotsb < u_{n-1}\) be the upper ramification jumps.
Then, for any real number \(u\) with \(u_{i - 1} < u \leq u_{i}\), we have
\((G^{0}: G^{u}) = p^{i}\).  We remark that \(u_{0} \geq 0\) because the residue
field \(k\) is algebraically closed.  Therefore, we get
\begin{align}
  \psi(u_{i}) &= \int_{0}^{u_{i}} (G^{0}: G^{w}) dw \label{eq:Hasse--Herbrand-for-G} \\
              &= \int_{0}^{u_{0}} + \int_{u_{0}}^{u_{1}} +
                 \dotsb + \int_{u_{i-1}}^{u_{i}} (G^{0}: G^{w})dw \\
              &= u_{0} + (u_{1} - u_{0})p + \dotsb + (u_{i} - u_{i-1})p^{i}.
\end{align}
Note that \(\psi(u_{i})\) are the lower ramification jumps of \(L/K\) by definition.  In
particular, when \(G = \setZ/p\setZ\), the unique lower ramification jumps is equal to the
unique upper ramification jump; we call it simply the \emph{ramification jump}.

The following is immediate from the equality \(H_{i} = G_{i} \cap H\).

\begin{lemma}\label{lem:subgroups&lower-jumps}
  The highest lower ramification jump of \(K_{i}/K\) is equal to the ramification jump of
  \(K_{i}/K_{i-1}\).
\end{lemma}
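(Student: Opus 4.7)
The plan is to apply the identity in~\cref{eq:prop:lower-rj} to the subgroup inclusion $H \coloneq \Gal(K_{i}/K_{i-1}) \subset \Gamma \coloneq \Gal(K_{i}/K)$. Here $\Gamma$ is cyclic of order $p^{i+1}$, and $H$ is the unique subgroup of $\Gamma$ of order $p$; this special subgroup structure of cyclic $p$-groups is exactly what makes the lemma work, since there is only one candidate for where $H$ can sit.

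First I would let $j^{\ast}$ denote the highest lower ramification jump of $K_{i}/K$, so that $\Gamma_{j^{\ast}} \neq \{1\}$ while $\Gamma_{j^{\ast}+1} = \{1\}$. Because $\Gamma_{j^{\ast}}$ is a nontrivial subgroup of the cyclic $p$-group $\Gamma$, it must contain the unique subgroup of order $p$; hence $H \subset \Gamma_{j^{\ast}}$.

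Next, applying~\cref{eq:prop:lower-rj} to the inclusion $H \subset \Gamma$, I would compute
\begin{equation}
H_{j^{\ast}} = \Gamma_{j^{\ast}} \cap H = H, \qquad H_{j^{\ast}+1} = \Gamma_{j^{\ast}+1} \cap H = \{1\}.
\end{equation}
Therefore $j^{\ast}$ is the (unique) ramification jump of the $p$-cyclic extension $K_{i}/K_{i-1}$, which is precisely the claim.

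There is essentially no genuine obstacle in this argument. The only subtle point worth spelling out is the elementary fact that any nontrivial subgroup of a cyclic $p$-group contains its minimal subgroup of order $p$, which is the algebraic input that converts the general subgroup-compatibility \cref{eq:prop:lower-rj} of the lower ramification filtration into the precise identification of the two ramification jumps in the statement.
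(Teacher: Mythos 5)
Your proof is correct and follows exactly the route the paper intends: the paper gives no explicit argument, stating only that the lemma is ``immediate from the equality \(H_{i} = G_{i} \cap H\),'' and your write-up simply makes explicit the one implicit ingredient (that a nontrivial subgroup of a cyclic \(p\)-group contains the unique subgroup of order \(p\), so \(\Gamma_{j^{\ast}} \supset H\)). Nothing is missing.
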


\begin{lemma}\label{lem:quotients&upper-jumps}
  The highest upper jump of \(K_{m}/K\) is equal to the \((m + 1)\)-th upper jump \(u_m\) of
  \(L/K\).
\end{lemma}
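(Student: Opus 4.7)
The plan is to apply the compatibility of upper ramification with quotients, namely~\cref{eq:prop:upper-rj}, to the normal subgroup $H \coloneq \Gal(L/K_m) \subset G$. Since $G$ is cyclic of order $p^n$ and $[L : K_m] = p^{n - m - 1}$, the group $H$ is the \emph{unique} subgroup of $G$ of order $p^{n - m - 1}$, a fact I will use repeatedly. The first step is to record the shape of the upper filtration on $G$: since each graded piece has order $p$ and the $n$ upper jumps are $u_0 < u_1 < \dotsb < u_{n - 1}$, the same computation of $(G^{0} : G^{w})$ used in deriving \cref{eq:Hasse--Herbrand-for-G} shows that $|G^{u}| = p^{n - i}$ whenever $u_{i - 1} < u \leq u_i$ (with the convention $u_{-1} = -1$), and $G^{u} = \{1\}$ for $u > u_{n - 1}$.

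Next I would combine this with \cref{eq:prop:upper-rj}, which identifies ${(G/H)}^{u} = G^{u} H / H$. Because $G$ is a cyclic $p$-group, any two of its subgroups are comparable, so $G^{u} H / H$ is trivial exactly when $G^{u} \subseteq H$, equivalently when $|G^{u}| \leq p^{n - m - 1}$. By the previous step this inequality holds precisely for $u > u_m$, while for $u \leq u_m$ one has $|G^{u}| \geq p^{n - m}$, hence $G^{u} \supsetneq H$ and ${(G/H)}^{u} = G^{u}/H$ is a nontrivial subgroup of order $p^{m + 1 - i}$ on $(u_{i - 1}, u_i]$ for $i \leq m$. Consequently the upper ramification jumps of $K_m/K$ are exactly $u_0 < u_1 < \dotsb < u_m$, and the highest one is $u_m$, which is the claim.

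The entire argument is essentially a bookkeeping exercise based on \cref{eq:prop:upper-rj}; there is no substantial obstacle. The only mild point that demands care is matching strict versus non-strict inequalities between $|G^{u}|$ and $|H|$ to the corresponding containments, and this is forced automatically by the uniqueness of subgroups of each prescribed order in the cyclic $p$-group $G$.
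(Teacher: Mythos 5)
Your argument is correct and follows essentially the same route as the paper: identify \(\Gal(K_m/K)\) as the quotient of \(G\) by the unique subgroup of order \(p^{n-m-1}\), apply the Herbrand compatibility \cref{eq:prop:upper-rj}, and use the total ordering of subgroups of the cyclic \(p\)-group to see that the surviving upper jumps are exactly \(u_0, \dotsc, u_m\). You merely spell out the order bookkeeping in more detail than the paper does.
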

\begin{proof}
  Note that the Galois group of \(K_{m}/K\) is the quotient of \(G = \setZ/p^{n}\setZ\) by the
  subgroup \(p^{n - m - 1}\setZ/p^{n}\setZ\).  From \cref{eq:prop:upper-rj}, the upper
  ramification jump of \(K_{m}/K\) are exactly \(u_{0}, u_{1}, \dotsc, u_{m}\), those jumps
  from a subgroup of \(G\) to another both of which contain \(p^{n - m - 1}\setZ/p^{n}\setZ\).
  The highest one among them is \(u_{m}\).
\end{proof}

The following claim follows from the lemmata above.

\begin{proposition}\label{prop:divide-jumps}
  The \((i+1)\)-th upper ramification jump of \(L/K\) is equal to the ramification jump of
  \(K_{i}/K_{i-1}\).
\end{proposition}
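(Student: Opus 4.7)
The plan is to chain the two preceding lemmas together, using the fact that a $p$-cyclic extension has a single ramification jump, at the same integer in both upper and lower numbering. First, I would apply \cref{lem:quotients&upper-jumps} with $m=i$ to identify $u_i$ with the highest upper ramification jump of the Galois extension $K_i/K$. Then I would apply \cref{lem:subgroups&lower-jumps} to identify the unique ramification jump of the top $p$-cyclic layer $K_i/K_{i-1}$ with the highest lower ramification jump of $K_i/K$.

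The remaining task is to see that these two highest jumps of $K_i/K$ pick out the same ramification invariant of the top layer. The natural way to do this is to write $\tilde{G}=\Gal(K_i/K)$ and $H=\Gal(K_i/K_{i-1})$, observe that $H$ is the unique subgroup of $\tilde{G}$ of order $p$, and note from the description of the upper filtration that $\tilde{G}^{u}\cap H = H$ for $u\le u_i$ while $\tilde{G}^{u}\cap H=\{1\}$ for $u>u_i$. Hence the single jump of the $p$-cyclic layer $H$, read through the ambient upper numbering inherited from $\tilde{G}$, is exactly $u_i$. Combined with the $p$-cyclic identification of upper and lower jumps for $K_i/K_{i-1}$, this pins down $u_i$ as the ramification jump of $K_i/K_{i-1}$.

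The main obstacle I expect is the careful bookkeeping of upper versus lower ramification numbering across the tower $K_i\supset K_{i-1}\supset K$. In particular, one has to invoke the transitivity formula $\varphi_{K_i/K}=\varphi_{K_{i-1}/K}\circ\varphi_{K_i/K_{i-1}}$ together with the fact that $\varphi_{K_i/K_{i-1}}$ is the identity up to the unique break of the top $p$-cyclic layer. Once this identification is in hand, the proposition follows immediately from the two lemmas.
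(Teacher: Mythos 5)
Your first two steps are exactly the right ingredients, but the bridge between them fails, and it fails at the precise point where the statement as printed is actually false. Write $\tilde G=\Gal(K_i/K)$ and $H=\Gal(K_i/K_{i-1})$. The filtration you consider, $u\mapsto \tilde G^u\cap H$, equals $H_{\psi_{K_i/K}(u)}$, so its break is indeed at $u=u_i$; but this ``ambient upper numbering'' is a third numbering on $H$, distinct both from the intrinsic lower numbering $(H_j)_j=(\tilde G_j\cap H)_j$ and from the intrinsic upper numbering $(H^v)_v=(H_{\psi_{K_i/K_{i-1}}(v)})_v$ of the extension $K_i/K_{i-1}$. The ``$p$-cyclic identification of upper and lower jumps'' only identifies the two \emph{intrinsic} numberings with each other, so it cannot convert the ambient break $u_i$ into the intrinsic ramification jump. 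Unwinding via $\psi_{K_i/K}=\psi_{K_i/K_{i-1}}\circ\psi_{K_{i-1}/K}$, the intrinsic jump of $K_i/K_{i-1}$ is the highest \emph{lower} jump of $K_i/K$, namely $\psi_{K_i/K}(u_i)=u_0+(u_1-u_0)p+\dotsb+(u_i-u_{i-1})p^i=l_i$, which is strictly larger than $u_i$ whenever $u_i>u_0$, i.e.\ for every $i\ge 1$.

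Concretely, take $p=2$ and $\vect{f}=(t^{-1},t^{-1})$, so $u_0=1$, $u_1=\max\{2,1\}=2$, $l_1=1+2\cdot 1=3$. Here $g_1^2-g_1=t^{-1}+t^{-1}g_0$ has valuation $-3$ in $K_0$, so the ramification jump of $K_1/K_0$ is $3=l_1$, not $2=u_1$. Thus no proof of the proposition as stated can succeed; the word ``upper'' must be replaced by ``lower'', which is also the form in which the proposition is actually invoked in the proof of \cref{thm:v-function} (``$l_i$ is the lower ramification jump of $K_i/K_{i-1}$''). You should be aware that the paper's own one-line proof contains the same slip: it passes from ``highest upper jump of $K_i/K$'' to ``ramification jump of $K_i/K_{i-1}$'', whereas the lemma on subgroups only identifies the latter with the highest \emph{lower} jump of $K_i/K$. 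The correct argument for the corrected statement is: by the subgroup lemma the jump of $K_i/K_{i-1}$ is the highest lower jump of $K_i/K$, which equals $\psi_{K_i/K}(u_i)$ by the quotient lemma, and $\psi_{K_i/K}(u_i)=\psi_{L/K}(u_i)=l_i$ because $(G^0:G^w)=(\Gal(K_i/K)^0:\Gal(K_i/K)^w)$ for all $w\le u_i$.
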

\begin{proof}
  The \((i + 1)\)-th upper ramification jump \(u_{i}\) is equal to the highest upper
  ramification jump of \(K_{i}/K\), which is equal to the ramification jump of
  \(K_{i}/K_{i - 1}\).
\end{proof}

\begin{lemma}\label{lem:rj-and-ord}
  The ramification jump of \(K_{i}/K_{i-1}\) is equal to \(v_{K_{i}}(\tilde{g}_{i})\).
\end{lemma}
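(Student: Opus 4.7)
The plan is to reduce to the standard Artin--Schreier ramification computation applied to the single \(p\)-cyclic extension \(K_i/K_{i-1}\). Recall that \(K_i = K_{i-1}(\tilde{g}_i)\) where \(\tilde{g}_i^p - \tilde{g}_i = \tilde{f}_i\) and, by the adjustment made in \cref{sec:G-actions}, \(\tilde{f}_i \in K_{i-1}\) is in standard form: \(v_{K_{i-1}}(\tilde{f}_i) < 0\) and \(p \nmid v_{K_{i-1}}(\tilde{f}_i)\). Since the residue field \(k\) is algebraically closed, this forces \(K_i/K_{i-1}\) to be totally ramified of degree \(p\), so \(v_{K_i}\) restricted to \(K_{i-1}\) equals \(p\, v_{K_{i-1}}\).

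The first step is to pin down \(v_{K_i}(\tilde{g}_i)\). Since \(\tilde{g}_i\) has negative valuation, the term \(\tilde{g}_i^p\) dominates \(\tilde{g}_i\) in the Artin--Schreier equation, giving
\begin{equation*}
p\, v_{K_i}(\tilde{g}_i) = v_{K_i}(\tilde{f}_i) = p\, v_{K_{i-1}}(\tilde{f}_i),
\end{equation*}
so \(v_{K_i}(\tilde{g}_i) = v_{K_{i-1}}(\tilde{f}_i)\). Set \(m := -v_{K_i}(\tilde{g}_i)\); then \(m\) is a positive integer coprime to \(p\).

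The second step is the standard direct computation of the lower ramification jump of a \(p\)-cyclic Artin--Schreier extension (recall that the lower and upper jumps coincide in this case). Choose integers \(a, b\) with \(am + bp = 1\); then \(\pi_i := \tilde{g}_i^{-a} \pi_{i-1}^{b}\) is a uniformizer of \(K_i\), where \(\pi_{i-1}\) is any uniformizer of \(K_{i-1}\). The generator \(\sigma\) of \(\Gal(K_i/K_{i-1})\) acts by \(\tilde{g}_i \mapsto \tilde{g}_i + 1\), and a short expansion of
\begin{equation*}
\sigma(\pi_i)/\pi_i - 1 = (1 + \tilde{g}_i^{-1})^{-a} - 1
\end{equation*}
yields \(v_{K_i}(\sigma(\pi_i) - \pi_i) = 1 + m\). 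By the defining property of the lower ramification filtration, this identifies the ramification jump as \(m\), equivalently as \(v_{K_i}(\tilde{g}_i)\) in the absolute-value convention consistent with the rest of the paper.

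The only real obstacle is the bookkeeping of signs and normalizations across the two valuations \(v_{K_{i-1}}\) and \(v_{K_i}\); once the standard-form condition on \(\tilde{f}_i\) and the coprimality of \(m\) with \(p\) are used, everything reduces to the textbook Artin--Schreier calculation in~\cite[Chap.~IV]{Serre1979:Local}. Combined with \cref{prop:divide-jumps}, this lemma therefore connects the valuations \(v_{K_i}(\tilde{g}_i)\) appearing in \cref{lem:v-function-valuation} to the upper ramification jumps that will, in turn, be computed in terms of the orders \(j_m = -\ord f_m\) in \cref{prop:upper-ramification-jumps}.
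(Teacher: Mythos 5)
Your argument is correct and is essentially the proof the paper intends: the paper's own proof is a one-line citation to \cite[Proposition~2.10]{Yasuda2014:p-Cyclic}, which is precisely this textbook Artin--Schreier jump computation, valid here because of the standard-form condition \(p \nmid v_{K_{i-1}}(\tilde{f}_{i})\). You also correctly flag the sign convention --- the jump is really \(-v_{K_{i}}(\tilde{g}_{i}) = m > 0\) --- which is exactly how the lemma is used later in the proof of \cref{thm:v-function}.
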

\begin{proof}
  The proof given in~\cite[Proposition~2.10]{Yasuda2014:p-Cyclic} works for our situation,
  because of \(p \nmid v_{K_{i - 1}}(\tilde{f}_{i})\).
\end{proof}

From \cref{lem:v-function-valuation}, \(\bm{v}(\Spec L)\) is expressed in terms of valuations
of \(\tilde{g}_{i}\)'s, which are in turn related to upper ramification jumps of \(L/K\) by
the above results.  To determine \(\bm{v}(\Spec L)\), we now compute the upper ramification
jumps in terms of the corresponding representative Witt vectors.

\begin{theorem}
  Let \(L/K\) be a \(G\)-extension given by an equation \(\wp(\vect{g}) = \vect{f}\), where
  \(\vect{f}\) is reduced.  Then, the highest upper ramification jump is given by
  \begin{equation}
    \max\{- p^{n - 1 -i} v_{K}(f_{i}) \mid i = 0, 1, \dotsc, n - 1\}.
  \end{equation}
Here we follow the convention that \(v_{K}(0) = \infty\).
\end{theorem}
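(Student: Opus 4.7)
The plan is to proceed by induction on $n$. The base case $n = 1$ is the classical Artin--Schreier theory: for $K(g_0)/K$ with $g_0^p - g_0 = f_0$ reduced, the unique ramification jump equals $-v_K(f_0)$, matching the formula.

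For the inductive step, I would use the tower $K \subset K_{n-2} \subset L$ and the uniqueness of reduced representatives (\cref{lem:representative-Witt-vector}): the $p^{n-1}$-subextension $K_{n-2}/K$ corresponds to the reduced Witt vector $(f_0, \dotsc, f_{n-2})$. Applying the induction hypothesis to $K_{n-2}/K$, combined with \cref{lem:quotients&upper-jumps}, yields the upper jumps $u_0, \dotsc, u_{n-2}$ of $L/K$. To determine the top jump $u_{n-1}$, I would use \cref{lem:subgroups&lower-jumps} to identify the top lower jump of $L/K$ with the ramification jump of the $p$-Artin--Schreier extension $K_{n-1}/K_{n-2}$, namely $-v_{K_{n-2}}(\tilde{f}_{n-1})$; the Hasse--Herbrand relation~\cref{eq:Hasse--Herbrand-for-G} then reduces the problem to computing $v_{K_{n-2}}(\tilde{f}_{n-1})$.

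Unfolding the $(n-1)$-th coordinate of $\wp(\vect{g}) = \vect{f}$ via the Witt sum polynomial gives
\begin{equation*}
  \tilde{f}_{n-1} = f_{n-1} + T_{n-1}(g_0, \dotsc, g_{n-2}, f_0, \dotsc, f_{n-2}) + \wp(h_{n-1}),
\end{equation*}
where $T_{n-1} \in \setZ[X_0, \dotsc, X_{n-2}, Y_0, \dotsc, Y_{n-2}]$ is weight-homogeneous of degree $p^{n-1}$ under $\deg X_i = \deg Y_i = p^i$, and $h_{n-1} \in K_{n-2}$ is chosen so that $v_{K_{n-2}}(\tilde{f}_{n-1})$ is negative and coprime to $p$. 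The core step is a term-by-term valuation analysis: using the inductively known $v_{K_i}(\tilde{g}_i) = -l_i$ (coprime to $p$) and $v_{K_{n-2}}(f_j) = p^{n-1} v_K(f_j)$, one tracks which monomial of $f_{n-1} + T_{n-1}$ (or of the necessary reduction $\wp(h_{n-1})$) survives as the dominant one, obtaining
\begin{equation*}
  -v_{K_{n-2}}(\tilde{f}_{n-1}) = l_{n-2} + p^{n-1} \bigl( \max(p u_{n-2}, -v_K(f_{n-1})) - u_{n-2} \bigr).
\end{equation*}
Substituting into~\cref{eq:Hasse--Herbrand-for-G} yields $u_{n-1} = \max(p u_{n-2}, -v_K(f_{n-1})) = \max\{-p^{n-1-i} v_K(f_i) : 0 \le i \le n-1\}$, which closes the induction.

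The hard part is this valuation analysis of $T_{n-1}$ (and of the necessary $\wp$-reduction when $-v_K(f_{n-1}) > p u_{n-2}$): since some monomials have valuations divisible by $p$ (susceptible to $\wp$-reduction in $K_{n-2}$) while others are coprime to $p$ (immune to such reductions), pinning down the dominant surviving monomial requires careful use of the coprime-to-$p$ property of each $v_{K_i}(\tilde{g}_i)$ and of the distinctness modulo $p$ of the relevant monomials' valuations.
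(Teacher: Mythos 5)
Your route is genuinely different from the paper's. The paper's proof is essentially a two-line citation: it invokes Brylinski's theorem that a reduced Witt vector $\vect{f}$ with $\min_i p^{n-i-1}v_K(f_i) = -m$ produces an extension of Artin conductor $m+1$, and then Serre's relation between the conductor and the highest upper ramification jump. Your plan is instead the classical direct computation in the style of Schmid--Witt: induct on $n$, peel off the subextension $K_{n-2}/K$ (correctly identified with the truncated reduced vector $(f_0,\dotsc,f_{n-2})$ via \cref{lem:representative-Witt-vector}), and compute the top jump by reducing the Artin--Schreier equation of $K_{n-1}/K_{n-2}$ to standard form. The skeleton is sound, and the target identity you write for $-v_{K_{n-2}}(\tilde f_{n-1})$ is exactly the translation of $u_{n-1}=\max(pu_{n-2},-v_K(f_{n-1}))$ through \cref{eq:Hasse--Herbrand-for-G}, so if the valuation analysis goes through, the induction closes. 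The trade-off is clear: the paper's proof is short but opaque and rests on an external black box; yours would be self-contained and elementary but requires a genuinely delicate computation.

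That computation, however, is precisely where all the content of the theorem lives, and your proposal only asserts its outcome. Two concrete points of difficulty are not addressed. First, the polynomial $T_{n-1}$ evaluated at $(g_0,\dotsc,g_{n-2},f_0,\dotsc,f_{n-2})$ contains many monomials whose $v_{K_{n-2}}$-valuations can coincide, so ``the dominant monomial'' is not well defined without ruling out cancellation among leading terms; isobaricity of weight $p^{n-1}$ alone does not do this, and the $g_i$ (as opposed to the $\tilde g_i$) do not even have valuations coprime to appropriate powers of $p$. Second, in the regime $-v_K(f_{n-1}) < pu_{n-2}$ the leading term of $f_{n-1}+T_{n-1}$ typically has valuation divisible by $p$ and must be stripped off by repeated $\wp$-reductions in $K_{n-2}$ before one reaches a term of valuation coprime to $p$; showing that this process terminates exactly at $-(l_{n-2}+p^{n-1}(p-1)u_{n-2})$, rather than at something smaller in absolute value, is the heart of Schmid's argument and is not a routine bookkeeping step. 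Until that analysis is written out (or replaced by a citation to a reference that carries it out, e.g.\ the conductor computation the paper itself cites), the proposal is a correct strategy with its central step missing.
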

\begin{proof}
  For an integer \(m\), we define
  \begin{equation}
    W_n^{(m)}(K)\coloneq \{(f_0,\dots,f_{n-1})\mid p^{n-i-1}v_K(f_i) \geq m\}.
  \end{equation}
  From~\cite[p.~26, Corollary]{Brylinski1983:Theorie}, for
  \(\bm{f} \in W_{n}^{(-m)}(K) \setminus W_{n}^{(1-m)}(K)\), the corresponding extension
  \(L/K\) has Artin conductor \(m+1\) (for the character \(\chi \colon G \to \setC\) of any
  faithful irreducible \(G\)-representation over \(\setC\)).  From~\cite[Chapter~VI
  ,Proposition~5]{Serre1979:Local}, the highest upper ramification jump is \(m\).
\end{proof}

This theorem together with \cref{lem:quotients&upper-jumps} shows the following corollary:

\begin{corollary}\label{prop:upper-ramification-jumps}
  In the same situation as above, the upper ramification jumps of \(L/K\) are given by
  \begin{multline}\notag
    -v_{K}(f_{0}) \leq \max\{-pv_{K}(f_{0}), -v_{K}(f_{1})\} \leq \dotsb \\
    \dotsb \leq \max\{-p^{n - 1 - i} v_{K}(f_{i}) \mid i = 0, 1, \dotsc, n - 1\}.
  \end{multline}
\end{corollary}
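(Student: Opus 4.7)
The plan is to derive the corollary from the preceding theorem by applying it to each intermediate extension \(K_{m}/K\) in the tower \(K = K_{-1} \subset K_{0} \subset \dotsb \subset K_{n-1} = L\), and then pulling the resulting information back to \(L/K\) via \cref{lem:quotients&upper-jumps}.

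First I would identify the defining Witt vector of \(K_{m}/K\) as a \(\setZ/p^{m+1}\setZ\)-extension. If \(\vect{g} = (g_{0}, \dotsc, g_{n-1})\) is a solution to \(\wp(\vect{g}) = \vect{f}\), then by the construction in \cref{sec:G-actions} we have \(K_{m} = K(g_{0}, \dotsc, g_{m})\), and the truncated Witt vector \((g_{0}, \dotsc, g_{m}) \in \rngW[m+1]{K}\) satisfies \(\wp(g_{0}, \dotsc, g_{m}) = (f_{0}, \dotsc, f_{m})\) because the Witt-vector sum and Frobenius respect truncation. Hence \(K_{m}/K\) corresponds under Artin--Schreier--Witt theory to the truncated Witt vector \((f_{0}, \dotsc, f_{m})\), which is still reduced since \(\vect{f}\) is.

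Next I would apply the preceding theorem to this reduced Witt vector of length \(m+1\): the highest upper ramification jump of \(K_{m}/K\) equals \(\max\{-p^{m-i} v_{K}(f_{i}) \mid i = 0, 1, \dotsc, m\}\), where the exponent \(m-i\) comes from specializing the exponent \((n-1-i)\) of the theorem to ``\(n\)'' \(= m+1\). By \cref{lem:quotients&upper-jumps} this maximum is precisely \(u_{m}\), the \((m+1)\)-th upper ramification jump of \(L/K\). Letting \(m\) run from \(0\) to \(n-1\) produces the entire list of upper jumps stated in the corollary, and the successive inequalities between them are automatic from the formulas together with \(v_{K}(f_{i}) < 0\) (so that replacing the exponent \(m-i\) by \((m+1)-i\) multiplies each positive entry by \(p \geq 1\)).

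I do not expect any real obstacle: the proof is essentially a direct assembly of the preceding theorem and \cref{lem:quotients&upper-jumps}. The only point that deserves a line of care is the compatibility of the Artin--Schreier--Witt correspondence with truncation, i.e.\ that the truncated Witt vector is indeed the parameter attached to the subextension \(K_{m}/K\); the reducedness of the truncation is exactly what keeps us in the hypotheses of the preceding theorem.
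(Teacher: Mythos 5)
Your proposal is correct and follows essentially the same route as the paper, which derives the corollary in one line by combining the preceding theorem with \cref{lem:quotients&upper-jumps}; you simply make explicit the two details the paper leaves implicit (compatibility of the Artin--Schreier--Witt correspondence with truncation of Witt vectors, and the preservation of reducedness under truncation), both of which are handled correctly.
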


We obtain the following as a conclusion of this section.

\begin{theorem}\label{thm:v-function}
  Let \(E^{*} = \Spec L\) be a \(G\)-cover of \(D^{*} = \Spec K\).  Assume that the
  \(G\)-extension \(L/K\) is defined by an equation \(\wp(\vect{g}) = \vect{f}\), where
  \(\vect{f} \in {\setRP_{k}}^{n}\) is a representative Witt vector of
  order \(- \vect{j}\) (\(\vect{j} \in {(\setN'\cup\{-\infty\})}^{n}\)) with
  \(j_{0} \ne - \infty\).  Put
  \begin{align}
    u_{i} &= \max \{p^{n - 1 - m} j_{m} \mid m = 0, 1, \dotsc, i - 1\}, \\
    l_{i} &= u_{0} + (u_{1} - u_{0})p + \dotsb + (u_{i} - u_{i - 1}) p^{i}.
  \end{align}
  Then
  \begin{equation}
    \vect{v}(E^{*}) = \sum_{\substack{0 \leq i_{0} + pi_{1} + \dotsb + p^{n-1}i_{n-1} < d, \\
                            0 \leq i_{0} , i_{1}, \dotsc, i_{n-1} < p}} %
    \ceil{\frac{i_{0}p^{n-1}l_{0} + i_{1}p^{n-2}l_{1} + \dotsb + i_{n-1}l_{n-1}}{p^{n}}}.
  \end{equation}
\end{theorem}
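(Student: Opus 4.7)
The strategy is to combine \cref{lem:v-function-valuation}, which expresses \(\vect{v}(E^{*})\) as a sum of ceilings of linear combinations of the valuations \(v_{L}(\tilde{g}_{l})\), with the identification \(-v_{L}(\tilde{g}_{l}) = p^{n-1-l} l_{l}\) for every \(l\). Once this identification is in hand, substituting into \cref{lem:v-function-valuation} immediately produces the displayed formula; the expression for each \(u_{i}\) in terms of the data \(j_{m}\) is then provided by \cref{prop:upper-ramification-jumps}.

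To establish the identification, I would first unfold the tower \(L = K_{n-1} \supset \dotsb \supset K_{-1} = K\). Since \(k\) is algebraically closed, every step \(K_{i}/K_{i-1}\) is totally ramified of degree \(p\); hence \(e(L/K_{l}) = p^{n-1-l}\), so \(v_{L}(\tilde{g}_{l}) = p^{n-1-l} v_{K_{l}}(\tilde{g}_{l})\). By \cref{lem:rj-and-ord}, \(-v_{K_{l}}(\tilde{g}_{l})\) is the ramification jump of the \(p\)-cyclic extension \(K_{l}/K_{l-1}\); by \cref{lem:subgroups&lower-jumps}, this equals the highest lower ramification jump of \(K_{l}/K\).

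Next I would compute that highest lower jump by passing through the upper numbering, where the filtration behaves well under the quotient \(G \to \Gal(K_{l}/K)\). By Herbrand's compatibility together with \cref{lem:quotients&upper-jumps}, the upper ramification jumps of \(K_{l}/K\) are exactly \(u_{0} < u_{1} < \dotsb < u_{l}\). Repeating the Hasse--Herbrand computation of \cref{eq:Hasse--Herbrand-for-G} inside \(\Gal(K_{l}/K) \cong \setZ/p^{l+1}\setZ\) then yields
\(\psi_{K_{l}/K}(u_{l}) = u_{0} + (u_{1}-u_{0})p + \dotsb + (u_{l}-u_{l-1})p^{l} = l_{l}\),
matching the quantity defined in the statement.

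The main delicate point is the upper/lower bookkeeping across the intermediate quotient \(K_{l}/K\): \cref{lem:subgroups&lower-jumps} demands a lower jump, but we only have a clean formula for upper jumps via \cref{prop:upper-ramification-jumps}, so one must push those upper jumps of \(K_{l}/K\) through \(\psi_{K_{l}/K}\) to obtain precisely the lower jump \(l_{l}\) of \(L/K\) (rather than the simpler value \(u_{l}\)). Pairing this \(l_{l}\) with the factor \(p^{n-1-l}\) arising from \(e(L/K_{l})\) reproduces the coefficient \(p^{n-1-l} l_{l}\) of \(i_{l}\) in the theorem's formula.
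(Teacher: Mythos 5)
Your proposal is correct and follows essentially the same route as the paper's proof: it combines \cref{lem:v-function-valuation} with the identification \(v_{L}(\tilde{g}_{l}) = -p^{n-1-l} l_{l}\), obtained from \cref{lem:rj-and-ord}, \cref{prop:upper-ramification-jumps} and the Hasse--Herbrand computation \cref{eq:Hasse--Herbrand-for-G}. Your explicit passage through \(\psi_{K_{l}/K}\), converting the highest upper jump \(u_{l}\) of \(K_{l}/K\) into the lower jump \(l_{l}\) before invoking \cref{lem:subgroups&lower-jumps}, is in fact more careful than the paper's appeal to \cref{prop:divide-jumps}, whose literal statement conflates the upper jump \(u_{i}\) with the lower jump \(l_{i}\) of \(K_{i}/K_{i-1}\).
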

\begin{proof}
  By \cref{prop:upper-ramification-jumps}, \(u_{i}\) is the \((i+1)\)-th upper ramification
  jump of \(L/K\).  We can conclude that \(l_{i}\) is the \((i + 1)\)-th lower
  ramification jump of \(L/K\) by \cref{eq:Hasse--Herbrand-for-G}, and hence \(l_{i}\)
  is the lower ramification jump of \(K_{i}/K_{i-1}\) by \cref{prop:divide-jumps}.
  \cref{lem:rj-and-ord} shows that \(v_{L}(\tilde{g}_{i}) = -p^{n - 1 - i} l_{i}\).  By
  substituting them to the formula in \cref{lem:v-function-valuation}, we get the formula
  desired.
\end{proof}

\begin{remark}
  The previous theorem in particular shows that the function \(\bm{v}\) is constant on each
  \(\GCov(D; \vect{j})\).
\end{remark}

\section{Discrepancies of singularities}\label{sec:discrepancy}
In this section, we shall briefly review the wild McKay correspondence proved
in~\cite{Yasuda2019:Motivic} and explain how it relates discrepancies of quotient
singularities with the moduli space \(\GCov(D)\) and \(\bm{v}\)-function on it, following the
line of~\cite{Yasuda2014:p-Cyclic,Yasuda2019:Discrepancies}.

\subsection{Motivic integration}
The \emph{Grothendieck ring of varieties over \(k\)}, denoted by \(K_{0}\), is the abelian
group generated by isomorphic classes \([Y]\) of varieties over \(k\) subject to the following
relation: if \(Z\) is a closed subvariety of \(Y\), then \([Y] = [Y \setminus Z] + [Z]\).  It
has a ring structure by defining \([Y][Z] \coloneq [Y \times Z]\).  We denote
\(\mtvL \coloneq [\schA_{k}^{1}]\).  In application to the McKay correspondence, we need also
the following relation.  For a morphism \(f \colon Y \to Z\) of \(k\)-varieties and an integer
\(m \geq 0\), if every geometric fiber of \(f\) say over an algebraic closed field \(L\) is
universally homeomorphic to the quotient of \(\schA_{L}^{m}\) by some linear finite group
action, then \([Y] = \mtvL^{m}[Z]\).  We define \(K_{0}'\) to be the quotient of \(K_{0}\) by
this relation.  We denote by \(\rngM' \coloneq K_{0}'[\mtvL^{-1}]\) the localization by
\(\mtvL\).  Subgroups \(F_{m} \coloneq \langle [X] \mtvL^{i} \mid \dim X + i \leq -m \rangle\)
of \(\rngM'\) form a filtration.  We define \(\rngMhat' \coloneq \varprojlim \rngM'/F_{m}\),
which is again a commutative ring and complete with respect to the induced topology.

For \(n \in \setN\), let \(\pi_{n} \colon \spcJ X \to \spcJ[n] X\) be the truncation map to
\(n\)-jets.  We call a subset \(C \subset \spcJ X\) \emph{stable} if there exists
\(n \in \setN\) such that \(\pi_{n}(C)\) is a constructible subset of \(\spcJ[n] X\),
\(C = \pi_{n}^{-1}(\pi_{n}(C))\) and the map \(\pi_{m+1}(C) \to \pi_{m}(C)\) is a piecewise
trivial \(\schA_{k}^{d}\)-bundle for every \(m \geq n\).  We define the measure \(\mu_{X}(C)\)
of a stable subset \(C \subset \spcJ X\) by
\begin{equation}
  \mu_{X}(C) \coloneq [\pi_{n}(C)] \mtvL^{-nd} \in \rngMhat'.
\end{equation}
For a more general measurable subset of \(\spcJ X\), we define its measure as the limit of
ones of stable subsets.

Let \(C \subset \spcJ X\) be a measurable subset and \(F \colon C \to \setZ \cup \{\infty\}\)
a function on it.  We say that \(F\) is \emph{measurable} if every fiber of \(F\) is measurable.
Now we define the integral
\begin{equation}
  \int_{C} \mtvL^{F} \coloneq \sum_{m \in \setZ} \mu_{X}(F^{-1}(m)) \mtvL^{m}
    \in \rngMhat \cup \{\infty\}.
\end{equation}
Note that \(\int_{C} \mtvL^{F}\) does not necessarily converge.

\subsection{Stringy motives}
To state the wild McKay correspondence theorem, we shall define the stringy motive.  Firstly,
we shall recall basic notations concerning singularities.  Let \(X\) be a normal
\(k\)-variety, \(f \colon Y \to X\) a modification (proper birational morphism) such that
\(Y\) is a normal \(k\)-variety.  Assume that both the exceptional locus \(\Exc(f)\) and the
preimage \(f^{-1}(X_{\txtsing})\) of \(X_{\txtsing}\) are pure-dimension \(d - 1\).  We call
such a morphism an \emph{admissible modification}.  Note that the last condition implies
\(f^{-1}(X_{\txtsing}) \subset \Exc(f)\).  When \(X\) is \(\setQ\)-Gorenstein ,we can define
the \emph{relative canonical divisor} \(K_{f}\) in the usual way, which is a \(\setQ\)-Weil
divisor with a support contained in \(\Exc(f)\).  Let
\(\Exc(f) = \bigcup_{i \in \mathcal{E}_{f}} E_{i}\) and
\(f^{-1}(X_{\txtsing}) = \bigcup_{i \in \mathcal{S}_{f}} E_{i}\) be the decomposition into
irreducible components with \(\mathcal{S}_{f} \subset \mathcal{E}_{f}\) and write
\(K_{f} = \sum_{i} a_{i} E_{i}\).  We call \(a_{i}\) the \emph{discrepancy} of \(E_{i}\) with
respect to \(X\) and define
\begin{equation}
  d(X)  = \discrep(\text{center} \subset X_{\txtsing}; X)
  \coloneq \inf_{f} \min_{i \in \mathcal{S}_{f}} a_{i}.
\end{equation}
Here \(f\) runs over admissible modifications of \(X\).  We say that \(X\) is \emph{terminal}
(resp.\ \emph{canonical}, \emph{log terminal}, \emph{log canonical}) if \(d(X) > 0\) (resp.\
\(\geq 0\), \(> -1\), \(\geq -1 \)).  Note that if \(d(X) < -1\), then \(d(X) = - \infty\).

We also need to consider log pairs as is usual in birational geometry.  By a log pair, we mean
the pair \((X, \Delta)\) of a normal \(\setQ\)-Gorenstein variety \(X\) and a
\(\setQ\)-Cartier \(\setQ\)-Weil divisor \(\Delta\) on it.  We say that a log pair
\((X, \Delta)\) is \emph{klt} (resp.\ \emph{lc}) if for any admissible modification
\(f \colon Y \to X\), \(K_{f} - f^{*}\Delta\) has coefficients \(> 1\) (resp.\ \(\geq 1\)).

\begin{remark}
  If \(\Delta = 0\), then the notions klt (Kawamata log terminal), plt (pure log terminal) and
  dlt (divisorial log terminal) coincide.  In this case, we say that \(X\) is log terminal.
  Furthermore, if \(K_{X}\) is Cartier, then log terminal implies canonical.  For details,
  see~\cite[pp.~42--43]{Kollar2013:Singularities}.
\end{remark}

\begin{definition}
  Let \(X\) be a normal variety of pure-dimension \(d\).  We assume that the canonical sheaf
  \(\omega_{X}\) is invertible.  We define \emph{\(\omega\)-Jacobian ideal}
  \(\mathcal{J}_{X}\) by
  \begin{equation}
    \mathcal{J}_{X} \omega_{X} = \Img \left( \bigwedge^{d} \Omega_{X/k} \to \omega_{X}\right).
  \end{equation}
  For a log pair \((X, \Delta)\), we define the \emph{stringy motive} \(\Mst(X, \Delta)\) by
  \begin{equation}
    \Mst(X, \Delta) \coloneq \int_{\spcJ X} \mtvL^{\ord \Delta + \ord \mathcal{J}_{X}}.
  \end{equation}
  Here \(\ord\) denotes the order function associated to a divisor or an ideal sheaf.
\end{definition}

\begin{remark}
  The invariant ring by a linear action of a \(p\)-group is a UFD
  (\cite[Theorem~3.8.1]{Campbell2011:Modular}).  Especially, in our situation where
  \(G = \setZ/p^{n}\setZ\) linearly acts on \(\schA_{k}^{d}\), the quotient variety
  \(X \coloneq \schA_{k}^{d}/G\) is \(1\)-Gorenstein, that is, \(\omega_{X}\) is invertible.
\end{remark}

\subsection{The wild McKay correspondence}
Let \(G\) be a finite group (not necessarily cyclic of prime power order).  Generally, we can
construct the moduli space \(\GCov(D)\) of \(G\)-covers of \(D = \Spec k[[t]]\).  Furthermore,
we can define a measure on it.  For a locally constructible function
\(F \colon \GCov(D) \to \setQ\), we can define the integral \(\int_{\GCov(D)} \mtvL^{F}\).  We
remark that the \(\bm{v}\)-function is locally constructible.  For details,
see~\cite{Tonini2019:Moduli}.

\begin{theorem}[{\cite[Corollary~16.3]{Yasuda2019:Motivic}}]\label{Yasuda2019:Cor14.4}
  Let \(G\) be a finite group.  Assume that \(G\) acts on \(\schA_{k}^{d}\) linearly and
  effectively.  Put \(X \coloneq \schA_{k}^{d} / G\) and let \(\Delta\) be the \(\setQ\)-Weil
  divisor on \(X\) such that \(\schA_{k}^{d} \to (X, \Delta)\) is crepant.  Then we have
  \begin{equation}
    \Mst(X, \Delta) = \int_{\GCov(D)} \mtvL^{d - \bm{v}}.
  \end{equation}
\end{theorem}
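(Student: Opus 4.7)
The strategy is to transfer the motivic integral defining \(\Mst(X, \Delta)\) from the arc space \(\spcJ X\) to the moduli space \(\GCov(D)\) by a wild change-of-variables formula for the quotient map \(\pi \colon V = \schA_{k}^{d} \to X\). The approach parallels Denef--Loeser and Batyrev, but with the tuning module \(\Xi_{E}\) taking over the role played, in the tame case, by the age-weighted discrete decomposition of \(\spcJ X\).

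First I would set up the \emph{twisted arc} decomposition. Given a generic arc \(\gamma \in \spcJ X\), its pullback under \(\pi\) is controlled by the normalization in the function field of the arc, which yields a canonical \(G\)-cover \(E \to D\) together with a \(G\)-equivariant morphism \(\tilde{\gamma} \colon E \to V\). This breaks up \(\spcJ X\), up to a set of measure zero, as a disjoint union indexed by the isomorphism class of \(E \in \GCov(D)\). The fiber over \(E\) is precisely the space of \(G\)-equivariant maps \(E \to V\), that is, the tuning module \(\Xi_{E} \subset \shfO_{E}^{\oplus d}\), which by the cited lemma is free of rank \(d\) over \(k[[t]]\).

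The heart of the proof is the Jacobian calculation. Pulling the integrand \(\mtvL^{\ord \Delta + \ord \mathcal{J}_{X}}\) back through \(\tilde{\gamma} \mapsto \pi \circ \tilde{\gamma}/G\), the transformation factor at a point over \(E\) is governed by the determinant of a \(k[[t]]\)-basis \((x_{ij})\) of \(\Xi_{E}\) inside \(\shfO_{E}^{\oplus d}\). By the very definition of the \(\bm{v}\)-function (Definition~3.3), the logarithmic order of this determinant, after dividing by \(\#G\), equals \(\bm{v}(E)\); meanwhile the motivic measure of a free rank-\(d\) module of arcs contributes the factor \(\mtvL^{d}\). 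These combine to give an integrand \(\mtvL^{d - \bm{v}(E)}\) on the stratum lying over \(E\). Integrating over \(\GCov(D)\) with the measure induced by the moduli-theoretic description of Section~2 then yields the claimed identity.

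The main obstacle is rigorously executing this change of variables in the wild setting. In the tame case \(p \nmid \#G\), the set \(\GCov(D)\) is finite and the calculation is essentially Reid--Tate. Here, however, \(\GCov(D)\) is genuinely infinite-dimensional (\cref{rem:moduli}), so one must equip it with a motivic measure that interacts well with the finite-dimensional strata \(\GCov(D; \vect{j})\), prove a Fubini-type statement allowing the fiberwise \(\mtvL^{d}\) contribution to be integrated over families of \(G\)-covers, and verify measurability and convergence of the wild Jacobian contribution uniformly in \(E\). This is the technical core of \cite{Yasuda2019:Motivic} and leans essentially on the moduli-stack foundations of \cite{Tonini2019:Moduli} that justify replacing the fine moduli stack by the explicit parameter varieties \(\schG_{m, k} \times \schA_{k}^{n}\) used throughout Section~2.
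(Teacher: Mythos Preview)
The paper does not prove this theorem at all: it is quoted verbatim as \cite[Corollary~16.3]{Yasuda2019:Motivic} and used as a black box throughout Sections~4 and~5. There is therefore no ``paper's own proof'' to compare against.

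Your sketch is a reasonable high-level outline of how the result is established in \cite{Yasuda2019:Motivic}: the decomposition of \(\spcJ X\) (up to measure zero) by the induced \(G\)-cover, the identification of the fiber over \(E\) with the tuning module \(\Xi_{E}\), and the interpretation of \(\bm{v}(E)\) as the order of the relative Jacobian are indeed the backbone of the argument. You also correctly flag that the genuine work lies in the motivic-integration infrastructure for the infinite-dimensional moduli \(\GCov(D)\), which is developed in \cite{Yasuda2019:Motivic} and \cite{Tonini2019:Moduli} and only summarized in the present paper. For the purposes of \emph{this} paper, however, nothing beyond the citation is expected; the authors treat the identity \(\Mst(X,\Delta)=\int_{\GCov(D)}\mtvL^{d-\bm{v}}\) as input, not as something to be re-proved.
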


We shall consider the case \(G = \setZ/p^{n}\setZ\), which is of our principal interest.  In
this case, we can describe the measure on \(\GCov(D)\) explicitly as follows.  For a
constructible subset \(C\) of \(\GCov(D; \vect{j})\), then the measure \(\nu(C)\) is given by
\begin{equation}
  \nu(C) \coloneq [C] \in \rngMhat'.
\end{equation}
Suppose that \(F \colon \GCov(D) \to \setQ\) is constant on each stratum
\(\GCov(D; \vect{j})\).  We write \(F(\vect{j}) = F(\GCov(D; \vect{j}))\).  Then we can write
\begin{align}
  \int_{\GCov(D)} \mtvL^{F} &= \sum_{r \in \setQ} \nu(F^{-1}(r)) \mtvL^{r} \\
  &= \sum_{\vect{j}} \nu(\GCov(D; \vect{j})) \mtvL^{F(\vect{j})}.
\end{align}
Putting \(\vect{j} = (j_{0}, j_{1}, \dotsc, j_{n-1}) \in {(\setN' \cup \{- \infty\})}^{n}\),
we have
\begin{equation}
  \nu(\GCov(D; \vect{j})) =
    \prod_{j_{l} \neq - \infty} (\mtvL - 1) \mtvL^{j_{l} - 1 - \floor{j_{l}/p}},
\end{equation}
and hence
\begin{equation}
  \int_{\GCov(D)} \mtvL^{F} = \sum_{\vect{j}} \left(
    \prod_{j_{l} \neq - \infty} (\mtvL - 1) \mtvL^{j_{l} - 1 - \floor{j_{l}/p}}
  \right) \mtvL^{F(\vect{j})}.
\end{equation}

In \cref{Yasuda2019:Cor14.4}, if the \(G\)-action has no pseudo-reflection, then
\(\Delta = 0\).  If the action is indecomposable, we can check easily whether \(G\) has
pseudo-reflections or not as follows.

\begin{lemma}\label{lem:Jordan-form-of-p-power}
  Let \(J\) be a Jordan block of size \(d\) with eigenvalue \(1\) over \(k\).  We write
  \(d = qp + r\) (\(0 \leq r < p\)).  Then the Jordan standard form of \(J^{p}\) has \(r\)
  Jordan blocks of size \(q + 1\) and \(p-r\) Jordan blocks of size \(q\); in particular, it
  has exactly \(p\) blocks.
\end{lemma}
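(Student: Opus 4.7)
The plan is to reduce the computation to a statement about a nilpotent matrix by writing $J = I + N$, where $N$ is the standard size-$d$ nilpotent Jordan block. Since $\operatorname{char} k = p$, every intermediate binomial coefficient $\binom{p}{i}$ with $1 \leq i \leq p - 1$ vanishes in $k$, so the freshman's dream gives $J^{p} = (I + N)^{p} = I + N^{p}$. Consequently, the Jordan structure of $J^{p}$ about the eigenvalue $1$ coincides with the Jordan structure of the nilpotent matrix $N^{p}$ about $0$, and it suffices to determine the Jordan form of $N^{p}$.

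For that, I would fix the standard basis $e_{1}, \dots, e_{d}$ in which $N e_{i} = e_{i - 1}$ (with $N e_{1} = 0$). Then $N^{p}$ simply shifts indices by $p$: it sends $e_{i} \mapsto e_{i - p}$ for $i > p$ and annihilates $e_{1}, \dots, e_{p}$. More generally, $(N^{p})^{k}$ has image spanned by those $e_{i}$ with $i > k p$, hence $\dim \ker (N^{p})^{k} = \min(k p, d)$ for every $k \geq 0$.

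I would finish by invoking the standard fact that, for a nilpotent operator $T$, the number of its Jordan blocks of size at least $k$ equals $\dim \ker T^{k} - \dim \ker T^{k - 1}$. Writing $d = q p + r$ with $0 \leq r < p$, the difference $\dim \ker (N^{p})^{k} - \dim \ker (N^{p})^{k - 1}$ equals $p$ for $1 \leq k \leq q$, equals $d - q p = r$ for $k = q + 1$, and equals $0$ for $k \geq q + 2$. Taking successive differences, $N^{p}$ has exactly $r$ Jordan blocks of size $q + 1$ and $p - r$ of size $q$, so the total number of blocks is $p$, as claimed.

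I expect no genuine obstacle here: the only non-trivial input is the use of $\operatorname{char} k = p$ in the freshman's dream step, and the remainder is a bookkeeping computation about ranks of powers of a shift map.
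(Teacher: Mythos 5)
Your proof is correct and follows essentially the same route as the paper's: both arguments reduce to the nilpotent part via \(J^{p}-E=(J-E)^{p}=N^{p}\) in characteristic \(p\), compute the ranks (equivalently, kernel dimensions) of powers of the shift \(N^{p}\), and count Jordan blocks by taking differences of these quantities (your first-difference count of blocks of size \(\ge k\) is just the integrated form of the paper's second-difference formula for blocks of size exactly \(m\)). The only cosmetic difference is that you make the freshman's-dream step explicit, whereas the paper folds it into the assertion \(\rank(J^{p}-E)^{m}=\rank(J-E)^{pm}\).
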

\begin{proof}
  In general, the following holds: Let \(A\) be a square matrix of size \(d\).  We denote by
  \(C_{m}(\lambda)\) the number of Jordan block of size \(m\) with eigenvalue \(\lambda\) in
  the Jordan standard form of \(A\).  Then, we have
  \begin{equation}
    C_{m}(\lambda) = \rank {(A - \lambda E)}^{m - 1} - 2 \rank {(A - \lambda E)}^{m}
      + \rank {(A - \lambda E)}^{m + 1},
  \end{equation}
  where \(E\) denotes the identity matrix.  To prove the formula, we may assume that \(A\) is
  a Jordan block say with eigenvalue \(\lambda'\).  If \(\lambda' \neq \lambda\), the formula
  is obvious.  Let us assume \(\lambda' = \lambda\).  Then \(A - \lambda E\) is nilpotent and
  \(\rank {(A - \lambda E)}^{m} = \max \{0, d - m\}\).  By direct computation, we get the
  formula.

  Set \(A = J^{p}\).  It is easy to see that
  \(\rank {(J^{p} - E)}^{m} =\rank {(J - E)}^{pm} = \max \{0, d - pm\}\).  Especially, we have
  \(\rank {(J^{p} - E)}^{q-1} = d - p(q-1) = p+ r\), \(\rank {(J^{p} - E)}^{q} = d - pq = r\)
  and \(\rank {(J^{p} - E)}^{q + 1} = \rank {(J^{p} - E)}^{q + 2} = 0\).  Therefore, we get
  \(C_{q + 1}(1) = r\) and \(C_{q}(1) = p - r\).  The equality
  \(r(q + 1) + (p - r)q = qp + r = d\) completes the proof.
\end{proof}

\begin{lemma}\label{lem:has-p.refl-or-not}
  Let \(J_{d}\) be the Jordan block of size \(d\) with eigenvalue \(1\)
  (\(1 \leq d \leq p^{n}\)).  For \(1 \leq m < n\), \(J_{d}^{p^{m}}\) is a pseudo-reflection
  if and only if \(d = p^{m} + 1\).
\end{lemma}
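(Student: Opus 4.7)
The plan is to reduce the question to computing $\rank(J_d^{p^m} - E)$, since by definition $J_d^{p^m}$ is a pseudo-reflection exactly when this rank equals $1$. My approach is to write $J_d = E + N$, where $N = J_d - E$ is the standard nilpotent shift of size $d$, and then invoke the characteristic-$p$ Frobenius identity $(A + B)^{p^m} = A^{p^m} + B^{p^m}$ (valid for commuting $A, B$) applied to the commuting pair $E, N$. This collapses the $p^m$-th power to
\[
J_d^{p^m} - E = (E + N)^{p^m} - E = N^{p^m}.
\]

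Next, I would invoke the elementary rank formula $\rank(N^{l}) = \max\{0, d - l\}$ for the nilpotent shift matrix of size $d$, which is the very computation already used in the proof of \cref{lem:Jordan-form-of-p-power}. Setting $l = p^m$ gives
\[
\rank(J_d^{p^m} - E) = \max\{0, d - p^m\}.
\]
Hence if $d \leq p^m$ the matrix $J_d^{p^m}$ equals $E$ and is not a pseudo-reflection, while if $d > p^m$ the rank equals $d - p^m$ and is equal to $1$ precisely when $d = p^m + 1$. The constraint $1 \leq m < n$ ensures $p^m + 1 \leq p^n$, so this value of $d$ indeed lies in the admissible range.

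I do not anticipate any substantive obstacle; the argument is essentially a direct combination of the characteristic-$p$ Frobenius identity with the shift-matrix rank computation already established. The only genuinely structural input is the definition of a pseudo-reflection as a non-identity linear map with $\rank(g - E) = 1$, and the rest is a one-line rank calculation.
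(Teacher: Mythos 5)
Your proof is correct, and it takes a genuinely more direct route than the paper's. The paper proves the lemma by induction on \(m\): the base case \(m = 1\) is read off from \cref{lem:Jordan-form-of-p-power}, and for \(m > 1\) it writes \(J_{d}^{p^{m}} = (J_{d}^{p})^{p^{m-1}}\), substitutes the Jordan decomposition \(J_{q+1}^{\oplus r} \oplus J_{q}^{\oplus p - r}\) of \(J_{d}^{p}\), and then runs a two-case analysis of when such a direct sum can be a pseudo-reflection, invoking the induction hypothesis in each case. You instead apply the characteristic-\(p\) identity \((E + N)^{p^{m}} = E + N^{p^{m}}\) once, so that
\begin{equation}
  \rank\left(J_{d}^{p^{m}} - E\right) = \rank N^{p^{m}} = \max\{0, d - p^{m}\},
\end{equation}
and the conclusion is immediate from the definition of a pseudo-reflection (a non-identity element whose difference from the identity has rank one); the case \(d \leq p^{m}\) correctly yields the identity matrix, which is excluded. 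This is essentially the same rank computation the paper already performs inside the proof of \cref{lem:Jordan-form-of-p-power} (where it notes \(\rank (J^{p} - E)^{m} = \rank (J - E)^{pm}\)), but applied with exponent \(p^{m}\) directly rather than filtered through the Jordan form of \(J_{d}^{p}\). Your argument is shorter and avoids the induction entirely; what the paper's route buys is the explicit Jordan decomposition of \(J_{d}^{p}\), which it needs anyway elsewhere (for instance in \cref{eq:induced-subrepresentation}), so the inductive proof comes almost for free given that lemma.
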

\begin{proof}
  We shall prove by induction on \(m\).  When \(m = 1\), the claim follows immediately from
  \cref{lem:Jordan-form-of-p-power}.  Let \(m > 1\).  We write \(d = qp + r\)
  (\(0 \leq r < p\)).  Then
  \begin{align}
    J_{d}^{p^{m}}
    &= {(J_{d}^{p})}^{p^{m - 1}} \\
    &\equiv {\left( J_{q + 1}^{\oplus r} \oplus
      J_{q}^{\oplus p - r}\right)}^{p^{m - 1}} \\
    &= {\left(J_{q + 1}^{p^{m - 1}}\right)}^{\oplus r} \oplus
      {\left(J_{q}^{p^{m - 1}}\right)}^{\oplus p - r},
  \end{align}
  where \(\equiv\) denotes the similarity equivalence.  The matrix \(J_{d}^{p^{m}}\) is a
  pseudo-reflection if and only if one of the following holds:
  \begin{enumerate}
  \item \(J_{q + 1}^{p^{m - 1}}\) is a pseudo-reflection, \(r = 1\) and
    \(J_{q}^{p^{m - 1}} = 1\),
  \item \(J_{q}^{p^{m - 1}}\) is a pseudo-reflection, \(p - r = 1\) and
    \(J_{q + 1}^{p^{m - 1}} = 1\).
  \end{enumerate}
  In the latter case (2), by the induction hypothesis, we get \(q = p^{m - 1} + 1\), which
  contradicts the equality \(J_{q + 1}^{p^{m - 1}} = 1\).  In the former case (1), we get
  \(q + 1 = p^{m - 1} + 1\) and hence \(d = p^{m} + 1\).  Conversely, it is obvious that
  \(J_{p^{m} + 1}^{p^{m}}\) is a pseudo-reflection.
\end{proof}

\begin{corollary}\label{cor:pseudo-ref}
  Let \(J\) be a matrix of the Jordan normal form with a unique eigenvalue \(1\).
  \begin{enumerate}
  \item For a given integer \(m \geq 0\), the matrix \(J^{p^m}\) is a pseudo-reflection if and
    only if \(J\) has one Jordan block of size \(p^{m}+1\) and all the other blocks have size
    \(\le p^{m}\).
  \item Let \(p^n\) be the order of \(J\). The group
    \(\langle J \rangle \cong \setZ/p^{n}\setZ\) contains a pseudo-refection if and only if
    \(J\) has one Jordan block of size \(p^{n - 1} + 1\) and all the other blocks have size
    \(\le p^{n - 1}\). Moreover, if this is the case, the pseudo-reflections in the group are
    \(J^{i p^{n - 1}}\), \(1 \leq i \leq p - 1\).
  \end{enumerate}
\end{corollary}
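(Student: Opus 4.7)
The plan is to decompose $J$ into its Jordan blocks and reduce everything to the two preparatory lemmas. Write $J = \bigoplus_{s} J_{d_{s}}$ as a direct sum of Jordan blocks of sizes $d_{s}$. A matrix $A$ is a pseudo-reflection exactly when $\operatorname{rank}(A - I) = 1$, and since
\[
J^{p^{m}} - I = \bigoplus_{s}\bigl(J_{d_{s}}^{p^{m}} - I\bigr),
\]
the rank of $J^{p^{m}} - I$ equals $\sum_{s} \operatorname{rank}(J_{d_{s}}^{p^{m}} - I)$. Thus the analysis reduces to each Jordan block.

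For a single Jordan block $J_{d}$, I would classify its contribution to the rank as follows. If $d \leq p^{m}$, then $J_{d}^{p^{m}} = I$, because in characteristic $p$ the minimal polynomial of $J_{d}$ is $(x-1)^{d}$ and $(x-1)^{p^{m}} = x^{p^{m}} - 1$, so the contribution is $0$. If $d = p^{m} + 1$, then $J_{d}^{p^{m}}$ is a pseudo-reflection by \cref{lem:has-p.refl-or-not} (with an immediate direct check for the boundary case $m = 0$), contributing rank $1$. If $d > p^{m} + 1$, then $J_{d}^{p^{m}}$ is neither the identity (by the minimal-polynomial argument) nor a pseudo-reflection (by \cref{lem:has-p.refl-or-not} applied within a sufficiently large ambient $n$), so its contribution is at least $2$. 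The total rank therefore equals $1$ precisely when exactly one block satisfies $d_{s} = p^{m} + 1$ and every other block satisfies $d_{s} \leq p^{m}$; this is part~(1).

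For part~(2), I would parametrize a nonidentity element of $\langle J \rangle$ as $J^{i}$ with $1 \leq i < p^{n}$ and write $i = p^{v} i'$ with $\gcd(i', p) = 1$ and $0 \leq v < n$. The key auxiliary step is the similarity $J^{i'} \sim J$: setting $N = J - I$, the expansion $J^{i'} = I + i' N + \binom{i'}{2} N^{2} + \dotsb$ factors as $J^{i'} - I = i' N \cdot U$ with $U$ a polynomial in $N$ having invertible constant term (because $i' \not\equiv 0 \pmod{p}$). Since $N$ and $U$ commute, $\operatorname{rank}\bigl((J^{i'} - I)^{k}\bigr) = \operatorname{rank}(N^{k})$ for every $k$, forcing $J^{i'}$ and $J$ to have identical Jordan type. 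Consequently $J^{i} = (J^{i'})^{p^{v}}$ is conjugate to $J^{p^{v}}$, and by part~(1) it is a pseudo-reflection iff $J$ has exactly one block of size $p^{v} + 1$ and all other blocks of size $\leq p^{v}$. The order of $J$ being $p^{n}$ forces the largest block size to lie in $(p^{n - 1}, p^{n}]$, and the condition $p^{v} + 1 > p^{n - 1}$ then forces $v = n - 1$. The pseudo-reflections in $\langle J \rangle$ are therefore exactly the elements $J^{i' p^{n - 1}}$ with $1 \leq i' \leq p - 1$.

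The main obstacle is justifying the similarity $J^{i'} \sim J$ when $\gcd(i', p) = 1$; once that is established, the rest is bookkeeping using \cref{lem:Jordan-form-of-p-power} and \cref{lem:has-p.refl-or-not}.
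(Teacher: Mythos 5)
Your proof is correct and takes essentially the same approach as the paper: decompose \(J\) into Jordan blocks, use \cref{lem:Jordan-form-of-p-power} and \cref{lem:has-p.refl-or-not} to see that a block of size \(\le p^{m}\) contributes rank \(0\) to \(J^{p^{m}}-I\), a block of size \(p^{m}+1\) contributes rank \(1\), and a larger block contributes rank \(\ge 2\), and then reduce a general element of \(\langle J\rangle\) to a \(p\)-power before applying part (1). The only real difference is in that last reduction: the paper observes that two generators of the same cyclic subgroup have the same fixed subspace and hence the same pseudo-reflection status, whereas you prove the stronger similarity \(J^{i'}\sim J\) for \(p\nmid i'\) via the unit factorization \(J^{i'}-I=NU\) — equally valid, and your explicit treatment of the boundary case \(m=0\) is a small improvement over the paper's bare appeal to \cref{lem:has-p.refl-or-not}, which is only stated for \(m\ge 1\).
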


\begin{proof}
  (1). The ``if'' part immediately follows from the last lemma. If there are at least two
  blocks say \(A\) and \(B\) of size \(> p^{m}\), then neither \(A^{p^{m}}\) or \(B^{p^{m}}\)
  are the identity matrix. This shows that \(J^{p^m}\) is not a pseudo-reflection.  If there
  is no block of size \(> p^{m}\), then \(J^{p^{m}} = 1\), which is not a
  pseudo-reflection. Thus, for \(J^{p^{m}}\) being a pseudo-reflection, \(J\) needs to have
  one and only one block of size \(> p^{m}\) whose \(p^{m}\)-th power is a
  pseudo-reflection. Again from the last lemma, this block needs to have size \(p^{m}+1\).

  (2). For the group having pseudo-reflections, the matrix \(J\) needs to be of the form as in
  (1) for some \(m\). Because of the order, we have \(m = n - 1\). Conversely, if \(J\) is of
  this form for \(m = n - 1\), then the group contains the pseudo-reflection
  \(J^{p^{n - 1}}\). Thus the first assertion of (2) holds. To show the second assertion, we
  first note that when two elements \(A\), \(B\) of \(\langle J \rangle\) generate the same
  subgroup, then \(A\) is a pseudo-reflection if and only if \(B\) is a
  pseudo-reflection. Therefore we only need to consider the \(p\)-powers \(J^{p^{m}}\). The
  only pseudo-reflection among them is the one for \(m= n - 1\). The second assertion follows.
\end{proof}

\begin{proposition}\label{prop:boundary}
  Suppose that \(G=\setZ/p^{n}\setZ\) acts on \(\schA_{k}^{d}\) linearly and effectively and
  that there exists a pseudo-reflection. Let \(H \subset \schA_{k}^{d}\) be the hyperplane
  fixed by a pseudo-reflection in \(G\) (this hyperplane is independent of the
  pseudo-reflection from the above corollary).  Let \(\overline{H}\) be the image of \(H\) in
  the quotient variety \(\schA_{k}^{d}/G\) with the reduced structure. Then the map
  \(\schA_{k}^{d} \to (\schA_{k}^{d}/G, (p - 1) \overline{H})\) is crepant.
\end{proposition}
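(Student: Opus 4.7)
The plan is to factor the quotient morphism $\pi \colon Y = \schA_{k}^{d} \to X = Y/G$ as $Y \xrightarrow{\pi_{1}} Y' \xrightarrow{\pi_{2}} X$, where $Y' \coloneq Y/G'$ and $G' \subset G$ is the subgroup generated by pseudo-reflections. By \cref{cor:pseudo-ref}, $G' = \langle \sigma^{p^{n - 1}} \rangle \cong \setZ/p\setZ$, and every pseudo-reflection of $G$ lies in $G'$. The proof of \cref{lem:has-p.refl-or-not} shows that, after choosing coordinates putting $\sigma$ in Jordan normal form, the generator $\tau \coloneq \sigma^{p^{n - 1}}$ of $G'$ acts as the transvection $x_{1} \mapsto x_{1} + x_{r}$, $x_{i} \mapsto x_{i}$ ($i \neq 1$), where $r = p^{n - 1} + 1$ indexes the top of the unique Jordan block of size exceeding $p^{n - 1}$. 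Thus $H = \{x_{r} = 0\}$, and since $G$ is abelian $\sigma$ commutes with $\tau$ and preserves $H$.

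Next I would describe $Y'$ explicitly. The element $y \coloneq x_{1}^{p} - x_{r}^{p - 1} x_{1}$ is $G'$-invariant (a direct check using $\tau(x_{1}^{p}) = x_{1}^{p} + x_{r}^{p}$ in characteristic $p$), and the subring $k[y, x_{2}, \dotsc, x_{d}] \subset k[\vec{x}]$ has index $p$, so it coincides with $k[\vec{x}]^{G'}$. In particular, $Y' \cong \schA_{k}^{d}$ is smooth, and $\pi_{1}$ is finite flat, presented by $f(x_{1}) \coloneq x_{1}^{p} - x_{r}^{p - 1} x_{1} - y$. Since $f'(x_{1}) = -x_{r}^{p - 1}$, the different of $\pi_{1}$ is $(x_{r}^{p - 1})$, and the usual formula yields
\[
  K_{Y} = \pi_{1}^{*} K_{Y'} + (p - 1) H.
\]
Because $x_{r}$ is $G'$-invariant, the reduced image $H_{Y'} \coloneq \pi_{1}(H) \subset Y'$ is cut out by $x_{r}$, so $\pi_{1}^{*} H_{Y'} = H$ with multiplicity one, and we obtain $K_{Y} = \pi_{1}^{*}(K_{Y'} + (p - 1) H_{Y'})$.

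The remaining step, which is where the main subtlety lies, is to show that $\pi_{2} \colon Y' \to X$ is \'etale in codimension one. Given the generic point $y' \in Y'$ of any prime divisor $W' \subset Y'$, a class $gG' \in G/G'$ stabilizes $y'$ if and only if $g \cdot y \in G' \cdot y$ for some lift $y \in Y$, if and only if $\tau^{-k} g$ stabilizes $y$ in $G$ for some $k$. But by \cref{cor:pseudo-ref}, the only non-identity elements of $G$ whose fixed locus in $Y$ has codimension one are the pseudo-reflections, all of which lie in $G'$; hence the stabilizer in $G$ of the generic point of any prime divisor in $Y$ is contained in $G'$. This forces $g \in G'$, so every codimension-one stabilizer in $G/G'$ is trivial and $\pi_{2}$ is \'etale in codimension one. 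Consequently $\pi_{2}^{*} K_{X} = K_{Y'}$ and $\pi_{2}^{*} \overline{H} = H_{Y'}$ as Weil divisors, and combining with the previous step gives $K_{Y} = \pi^{*}(K_{X} + (p - 1)\overline{H})$, which is the desired crepancy.
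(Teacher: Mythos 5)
Your factorization of the quotient through \(Y' \coloneq \schA_{k}^{d}/G'\), with \(G'\) the order-\(p\) subgroup generated by the pseudo-reflections, is a genuinely different and more global route than the paper's proof, which instead passes to the formal neighbourhood of a general closed point of \(H\) (where the stabilizer has order \(p\)) and quotes the coefficient \(p-1\) from the \(\setZ/p\setZ\) case of \cite{Yasuda2014:p-Cyclic}. Your identification of \(\tau=\sigma^{p^{n-1}}\) as a transvection, the computation of \(k[x_{1},\dotsc,x_{d}]^{G'}\), the smoothness of \(Y'\), and the different of \(\pi_{1}\) yielding \(K_{Y}=\pi_{1}^{*}(K_{Y'}+(p-1)H_{Y'})\) are all correct.

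The gap is in the last step. What your orbit computation (``\(gG'\) stabilizes \(y'\) iff \(g\cdot y\in G'\cdot y\)'') detects is the \emph{decomposition} group \(\{g\in G: gW=W\}\) of a prime divisor \(W\subset Y\), and this group is not controlled by fixed loci: an element can map \(W\) to itself without fixing it pointwise. In fact the assertion ``every codimension-one stabilizer in \(G/G'\) is trivial'' is false in this sense: since \(G\) is abelian, \(H=\operatorname{Fix}(\tau)\) is stable under all of \(G\), so the decomposition group of \(H_{Y'}=\pi_{1}(H)\) in \(G/G'\) is the whole group (the same happens for any hyperplane cut out by a \(\sigma\)-invariant linear form, e.g.\ the last coordinate of a Jordan block). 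What unramifiedness of \(\pi_{2}\) in codimension one actually requires is triviality of the \emph{inertia} groups, i.e.\ of the subgroups acting trivially on the residue field \(k(W')\). Your fixed-locus argument does correctly show that the inertia group in \(G\) of every prime divisor of \(Y\) is contained in \(G'\) (it equals \(G'\) for \(W=H\) and is trivial otherwise); one then needs the compatibility of inertia groups with quotients, \(I_{G/G'}(W')=I_{G}(W)G'/G'\) (the \(u=0\) case of \cref{eq:prop:upper-rj}, applied after completing at the generic points), to conclude that \(\pi_{2}\) is \'etale in codimension one. With that replacement the argument closes; as written, the inference ``this forces \(g\in G'\)'' does not follow from the premises.
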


\begin{proof}
  Let \(V \coloneq \schA_{k}^{d}\) and \(X \coloneq V/G\). Let \(v \in H\) be a general
  \(k\)-point whose stabilizer subgroup \(S \subset G\) has order \(p\) and let
  \(x \in \overline{H}\) be its image. To compute the right coefficient of the boundary
  divisor on \(X\), it is enough to consider the morphism
  \(\Spec \hat{\shfO}_{V, v} \to \Spec \hat{\shfO}_{X, x}\) between the formal neighborhoods
  of \(v\) and \(x\). This morphism is isomorphic to the one similarly defined for the
  quotient morphism \(V \to V/S\) associated to the induced action of \(S =\setZ/p\setZ\) on
  \(V\) with pseudo-reflections. In this case, we know from~\cite{Yasuda2014:p-Cyclic} that
  the coefficient of the boundary divisor is \(p - 1\). This shows the proposition.
\end{proof}

\subsection{Discrepancies of singularities}
We shall recall the relation between stringy motives and discrepancies.

\begin{proposition}[{\cite[Proposition~6.6]{Yasuda2014:p-Cyclic}}]\label{Yasuda2014:Prop6.6}
  If the stringy motive \(\Mst(X, \Delta)\) converges, then the pair \((X, \Delta)\) is klt.
  Furthermore, if there exists a resolution \(f \colon Y \to X\) such that
  \(K_{Y} - f^{*}(K_{X} + \Delta)\) is a simple normal crossing \(\setQ\)-Cartier divisor,
  then the converse is also true.
\end{proposition}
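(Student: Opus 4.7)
The main tool is the motivic change of variables formula, and I treat the two directions of the proposition separately.

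\emph{The converse direction, assuming a log resolution.} Suppose $f \colon Y \to X$ is a log resolution with $K_{Y} - f^{*}(K_{X} + \Delta) = \sum_{i \in I} a_{i} E_{i}$ where $D = \bigcup_{i} E_{i}$ is a simple normal crossing divisor on $Y$. Applying the motivic change of variables formula to $f$ (the presence of the $\omega$-Jacobian ideal $\mathcal{J}_{X}$ in the definition of $\Mst$ is precisely what makes this legitimate over a singular base) converts the defining integral of $\Mst(X, \Delta)$ into $\int_{\spcJ Y} \mtvL^{\ord K_{f}}$. Stratifying $\spcJ Y$ using the SNC decomposition $Y = \coprod_{J \subset I} E_{J}^{\circ}$, where $E_{J}^{\circ} = \bigl(\bigcap_{i \in J} E_{i}\bigr) \setminus \bigl(\bigcup_{i \notin J} E_{i}\bigr)$, and integrating stratum by stratum yields the standard formula
\begin{equation}
  \Mst(X, \Delta) = \sum_{J \subset I} [E_{J}^{\circ}] \prod_{i \in J} \frac{\mtvL - 1}{\mtvL^{a_{i} + 1} - 1}.
\end{equation}
Each geometric factor $(\mtvL - 1)/(\mtvL^{a_{i} + 1} - 1)$ lies in $\rngMhat'$ precisely when $a_{i} + 1 > 0$, so $\Mst(X, \Delta)$ converges whenever every $a_{i}$ exceeds $-1$, which is the klt condition for $(X, \Delta)$.

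\emph{Convergence implies klt.} Suppose $(X, \Delta)$ is not klt, so there is a prime divisor $E$ over $X$ with discrepancy $a_{E} \leq -1$. On a suitable modification $g \colon Y' \to X$, concentrated near the generic point of the center of $E$ on $X$, we may assume that $E$ appears as a smooth prime Cartier divisor on $Y'$. For arcs in $\spcJ Y'$ of prescribed contact order $m \geq 1$ along $E$, the motivic measure of the corresponding cylinder is of the form $[E^{\circ}] (\mtvL - 1) \mtvL^{-m}$, while the change of variables formula applied to the restriction of $g$ contributes the weight $\mtvL^{a_{E} m}$. Summing over $m$ produces a geometric series whose ratio is $\mtvL^{-(a_{E} + 1)}$; since $-(a_{E} + 1) \geq 0$, this diverges in $\rngMhat'$, forcing $\Mst(X, \Delta)$ itself to diverge. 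Contrapositively, convergence implies klt.

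\emph{Main obstacle.} The genuine technical content is the change of variables formula in arbitrary characteristic, which was established in \cite{Yasuda2019:Motivic} and is what lets the argument above work on a possibly singular base via the correction term $\ord \mathcal{J}_{X}$. The remaining point is that the forward direction of the statement does not presume a global SNC log resolution; one needs only enough birational modification to realize one bad divisor $E$ as a smooth Cartier divisor on a neighborhood of its generic point, which is available unconditionally. After these are in place, both directions reduce to the same geometric-series computation in $\rngMhat'$, and the argument is then identical to that of \cite[Proposition~6.6]{Yasuda2014:p-Cyclic}.
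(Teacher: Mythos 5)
Your argument is correct and takes essentially the same route as the source: the paper gives no proof of this proposition at all, citing it verbatim from \cite[Proposition~6.6]{Yasuda2014:p-Cyclic}, whose proof is precisely the change-of-variables plus geometric-series computation you describe (with the \(\omega\)-Jacobian ideal absorbing the singularity of the base, as you note). One internal sign slip in your forward direction: since the relative canonical divisor enters the change of variables with a negative sign (as reflected in your own factor \((\mtvL - 1)/(\mtvL^{a_{i}+1}-1)\)), the weight on the contact-order-\(m\) cylinder should be \(\mtvL^{-a_{E} m}\) rather than \(\mtvL^{a_{E} m}\); only then does the product with the measure \(\sim \mtvL^{-m}\) give the ratio \(\mtvL^{-(a_{E}+1)}\) you correctly state, and the conclusion is unaffected.
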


For a measurable subset \(U\) of \(\spcJ X\), we define
\begin{equation}
  \lambda(U) \coloneq \dim \left(\int_{U} \mtvL^{\ord \mathcal{J}_{X}} d \mu_{X}\right),
\end{equation}
provided that the integration converges.  We say that a measurable subset \(U\) of \(\spcJ X\)
is \emph{small} if the relevant integration converges.  We also denote the truncation map by
\(\pi \colon \spcJ X \to \spcJ[0] X = X\).

The following proposition tell us that we can estimate the discrepancies of
\(\schA_{k}^{d}/G\) by computing the integration \(\int_{\GCov(D)} \mtvL^{d - \bm{v}}\).

\begin{proposition}[{\cite[Proposition~2.1]{Yasuda2019:Discrepancies}}]%
  \label{prop:Yasuda2019Prop2.1}
  Let \(C_{r} \subset \pi_{X}^{-1}(X_{\txtsing})\), (\(r \in \setN\)) be a countable
  collection of small measurable subset such that \(\pi^{-1}(X_{\txtsing})\) and
  \(\bigcup_{r \in \setN} C_{r}\) coincide outside a measurable subset.  Then
  \begin{equation}
    d(X) = d - 1 - \sup_{r} \lambda(C_{r}).
  \end{equation}
\end{proposition}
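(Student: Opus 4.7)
The plan is to reduce the computation of \(d(X)\) to divisorial data on a log resolution \(f \colon Y \to X\) via the motivic change of variables formula. Writing the relative canonical divisor as \(K_{f} = \sum_{i \in \mathcal{E}_{f}} a_{i} E_{i}\), I would first apply change of variables to obtain
\[
  \int_{C_{r}} \mtvL^{\ord \mathcal{J}_{X}} \, d \mu_{X}
  = \int_{f_{\infty}^{-1}(C_{r})} \mtvL^{- \ord K_{f}} \, d \mu_{Y},
\]
and then stratify \(\spcJ Y\) by the multi-contact orders \((n_{i})_{i \in I}\) of \(\infty\)-jets along the simple normal crossing components \(E_{i}\). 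A cylinder of fixed type \((n_{i})\) has motivic measure proportional to \([E_{I}^{\circ}] (\mtvL - 1)^{|I|} \mtvL^{- \sum n_{i}}\), where \(E_{I}^{\circ}\) denotes the locally closed stratum along \(\bigcap_{i \in I} E_{i}\). Summing the resulting geometric series in each \(n_{i} \geq 1\) against the twist \(\mtvL^{- \sum a_{i} n_{i}}\) contributes a piece of motivic dimension \(d - \sum_{i \in I} (a_{i} + 1)\) in the convergent (klt) regime.

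Next I would establish the identity \(\sup_{r} \lambda(C_{r}) = d - 1 - d(X)\) by matching inequalities. For the upper bound, each small \(C_{r}\) lifts to a finite union of the cylinders above, so \(\lambda(C_{r})\) does not exceed \(\max_{i_{0} \in \mathcal{S}_{f} \cap I}(d - a_{i_{0}} - 1)\); the definition of \(d(X)\) as an infimum over admissible modifications then bounds this by \(d - 1 - d(X)\). For the reverse inequality, the hypothesis that \(\bigcup_{r} C_{r}\) coincides with \(\pi^{-1}(X_{\txtsing})\) up to a measure-zero set forces every stratum of \(f_{\infty}^{-1}(\pi^{-1}(X_{\txtsing}))\) to meet some \(C_{r}\) in positive measure, so that \(\sup_{r} \lambda(C_{r})\) reaches the maximal stratum dimension; passing to the infimum over admissible \(f\) and choosing a modification that realizes \(d(X)\) at a single exceptional divisor gives the equality.

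The principal obstacle is the non-klt case: when some \(a_{i} + 1 \leq 0\) the relevant geometric series diverges, and one must set \(\lambda(C_{r}) = + \infty\) in a compatible way so that the formula correctly recovers \(d(X) = - \infty\) when the pair fails to be log canonical. A secondary difficulty is that log resolutions are not guaranteed to exist in arbitrary positive characteristic; in their absence one must work with admissible modifications in the sense of the preceding subsection and use a cofinality argument among divisorial valuations centered on \(X_{\txtsing}\) to argue that the supremum on the right-hand side is independent of the chosen modification and agrees with the infimum defining \(d(X)\).
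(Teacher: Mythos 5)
The paper does not prove this proposition; it is quoted verbatim from \cite[Proposition~2.1]{Yasuda2019:Discrepancies}, so there is no internal argument to compare against. Judged on its own terms, your outline is the standard characteristic-zero proof (change of variables to a log resolution, stratification by contact orders, summation of geometric series), and in characteristic zero it would go through: the stratum of type \((n_i)_{i\in I}\) contributes dimension \(d-\sum_{i\in I}(a_i+1)\), giving \(\lambda(C_r)\le d-1-\min_{i\in\mathcal{S}_f}a_i\) on one side, while the contact locus of a single divisor realizes \(d-1-a_i\) on the other.

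The genuine gap is the very first step: the existence of a log resolution \(f\colon Y\to X\) with simple normal crossing exceptional locus is not known over a field of characteristic \(p>0\) once \(\dim X\ge 4\), and that is exactly the regime of this paper (\(d\) ranges up to \(p^{2}\)). Everything downstream --- the formula \([E_I^\circ](\mtvL-1)^{|I|}\mtvL^{-\sum n_i}\) for the measure of a contact cylinder, and the decomposition of an arbitrary small \(C_r\) into finitely many such cylinders --- uses smoothness of \(Y\) and normal crossings essentially, so the hard inequality \(\lambda(C_r)\le d-1-d(X)\) collapses without it. Your closing remark that one should instead work with admissible modifications and a cofinality argument among divisorial valuations names the right substitute but is not an argument: an admissible modification is only required to be normal, the contact strata of its exceptional divisors need not be cylinders with computable measure, and the divisor computing \(d(X)\) need not appear on any single model. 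The cited proof avoids resolutions by combining the change-of-variables formula twisted by the \(\omega\)-Jacobian ideal \(\mathcal{J}_X\) (valid for arbitrary normal modifications) with codimension bounds relating cylinders in \(\spcJ X\) to divisorial valuations; the direction \(\sup_r\lambda(C_r)\ge d-1-d(X)\) needs only a computation near the generic point of one exceptional divisor, where normality already gives smoothness, but the reverse inequality for an arbitrary measurable \(C_r\) is precisely the point that cannot be reduced to a chosen resolution. As written, your proposal establishes the proposition only under a resolution hypothesis the paper cannot assume.
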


We now suppose that \(X \coloneq \schA_{k}^{d} / G\) the quotient variety associated to an
effective linear action of \(G = \setZ/p^{n}\setZ\).  The quotient morphism
\(\schA_{k}^{d} \to X\) and an arc \(D = \Spec K \to X\) induce a \(G\)-cover of \(D\), unless
the arc maps into the branch locus of \(\schA_{k}^{d} \to X\); the last exceptional case
occurs only for arcs in a measure zero subset of \(\spcJ X\).  For
\(\vect{j} \in {(\setN' \cup \{- \infty\})}^{n}\), let
\(M_{\vect{j}} \subset \pi^{-1}(X_{\txtsing})\) be the locus of arcs including a \(G\)-cover
\(E\) with \(\ord E = - \vect{j}\).  The collection of \(M_{\vect{j}}\) satisfies the
condition of \cref{prop:Yasuda2019Prop2.1}.  Suppose that \(G\) has no pseudo-reflection.  As
a variant of \cref{Yasuda2019:Cor14.4}, for
\(\vect{j} \neq (- \infty, -  \infty, \dotsc, - \infty)\), we have
\begin{equation}\label{eq:integral-over-Mj}
  \int_{M_{\vect{j}}} \mtvL^{\ord \mathcal{J}_{X}}
  = \int_{\GCov(D; \vect{j})} \mtvL^{d - \bm{v}}
  = [\GCov(D; \vect{j})] \mtvL^{d - \bm{v}(\vect{j})}
\end{equation}
and
\begin{equation}
  \lambda(M_{\vect{j}}) = \dim \nu(\GCov(D; \vect{j})) + d - \bm{v}(\vect{j}).
\end{equation}
The case \(\vect{j} = (- \infty, - \infty, \dotsc, - \infty)\) corresponds to the trivial
\(G\)-cover \(\coprod D \to D\).  We have
\begin{equation}
  \int_{M_{(- \infty, - \infty, \dotsc, - \infty)}} \mtvL^{\ord \mathcal{J}_{X}}
  = [R/G] = [B],
\end{equation}
where \(R \subset \schA_{k}^{d}\) and \(B \subset X\) are the ramification and the branch loci
of \(\schA_{k}^{d} \to X\) respectively.  In particular,
\begin{equation}
  \lambda(M_{(- \infty, - \infty, \dotsc, - \infty)}) = \dim R = \dim B.
\end{equation}
These formulae for \(\lambda\) together with \cref{prop:Yasuda2019Prop2.1} enable us to
estimate \(d(X)\) in terms of the \(\bm{v}\)-function in theory.  We shall carry it out in the
case \(n = 2\);  computation in this case is already rather complicated.

\section{The case \texorpdfstring{\(G=\setZ/p^2\setZ\)}{G=Z/p2Z}}\label{sec:case:n=2}
As an application of \cref{thm:v-function}, we shall compute \(\bm{v}\)-function for the case
\(G = \setZ/p^{2}\setZ\) and give a criterion for convergence the stringy motive
\(\Mst(\schA_{k}^{d}/G, \Delta)\).

\subsection{Some invariants}\label{sec:some-invariants}
For integers \(d\), \(j_0\) \(j_1\) with \(0<d\leq p^2\), writing \(d = qp + r\)
(\(0 \leq r < p\)), we define
\begin{align}
  \bm{e}_{d}^{>}(j_{0}, j_{1}) = \bm{e}_{d}^{>}(j_0)
    &\coloneq
      \sum_{\substack{0 \leq i_{0}, i_{1} < p, \\ 0 \leq i_{0} + i_{1} p < d}}
      \ceil{\frac{p i_{0} j_{0} + (p^{2} - p + 1) i_{1}j_{0}}{p^{2}}}, \\
  \bm{e}_{d}^{<}(j_{0}, j_{1})
    &\coloneq
      \sum_{\substack{0 \leq i_{0}, i_{1} < p, \\ 0 \leq i_{0} + i_{1} p < d}}
      \ceil{\frac{p i_{0} j_{0} + (-(p - 1) j_{0} + p j_{1}) i_{1}}{p^{2}}}.
\end{align}

Let \(V\) be a \(d\)-dimensional \(G\)-representation.  We do not assume \(0 < d \leq p^{n}\)
here.  It uniquely decomposes into indecomposables as
\begin{equation}
  V = \bigoplus_{\alpha = 1}^{a} V_{d_{\alpha}} \quad
  \left(1 \leq d_{\alpha} \leq p^{2}, \sum_{\alpha = 1}^{a} d_{\alpha} = d\right),
\end{equation}
where \(V_{d_{\alpha}}\) denotes the unique indecomposable \(G\)-representation of dimension
\(d_{\alpha}\). Components \(V_{d_{\alpha}}\) correspond to Jordan blocks in the Jordan normal
form of a generator of \(G\).  For integers \(j_0, j_1\), we define
\begin{align}
  \bm{e}_{V}^{>}(j_{0})
  &\coloneq \sum_{\alpha = 1}^{a} \bm{e}_{d_{\alpha}}^{>}(j_{0}), &
  \bm{e}_{V}^{<}(j_{0}, j_{1})
  &\coloneq \sum_{\alpha = 1}^{a} \bm{e}_{d_{\alpha}}^{<}(j_{0}, j_{1}).
\end{align}
For a connected \(G\)-cover \(E^*\) of \(D^*\) with order \(- \vect{j}= (- j_0, - j_1)\), from
\cref{thm:v-function}, we have
\begin{equation}
  \bm{v}_{V}(E^*) =
  \begin{cases}
     \bm{e}_V^> (j_0)      & \text{if \(pj_0 > j_1\)}, \\
     \bm{e}_V^< (j_0, j_1) & \text{if \(pj_0 < j_1\)}.
  \end{cases}
\end{equation}
Actually the functions \(\bm{e}_{V}^{>}\) and \(\bm{e}_{V}^{<}\) are both the sum of a linear
function and a periodic function.  To describe the linear part, we introduce some
invariants. For \(0 < d \leq p^{2}\), again writing \(d = qp + r\) (\(0 \leq r < p\)), we
define
\begin{align}
  A_{d}
    &\coloneq
      \sum_{\substack{0 \leq i_{0}, i_{1} < p, \\ 0 \leq i_{0} + i_{1} p < d}} i_{0}
      = \frac{qp(p - 1)}{2} + \frac{r(r - 1)}{2}, \\
  B_{d}
    &\coloneq
      \sum_{\substack{0 \leq i_{0}, i_{1} < p, \\ 0 \leq i_{0} + i_{1} p < d}} i_{1}
      = \frac{qp(q - 1)}{2} + qr.
\end{align}
For a \(G\)-representation \(V\) with decomposition as above, we define
\begin{equation}
  A_{V}
  \coloneq \sum_{\alpha = 1}^{a} A_{d_{\alpha}}, \,
   B_{V}
  \coloneq \sum_{\alpha = 1}^{a} B_{d_{\alpha}}
\end{equation}
and
\begin{equation}
  C_{V}^{>}    \coloneq p A_{V} + (p^{2} - p + 1) B_{V}, \quad
  C_{V}^{<, 0} \coloneq p A_{V} - (p - 1) B_{V}, \quad
  C_{V}^{<, 1} \coloneq p B_{V}.
\end{equation}
Note that all these invariants are integers.

\begin{lemma}\label{lem:reminder1}
For integers \(n_{i}\), \(s_{i}\) (\(i = 0, 1\)), we have
  \begin{align}
    \bm{e}_{V}^{>} (n_{0} p^{2} + s_{0} )
    &= C_V^{>} \cdot n_{0} + \bm{e}_{V}^{>} (s_{0}), \\
    \bm{e}_{V}^{<}(n_{0}p^{2} + s_{0}, n_{1}p^{2} + s_{1})
    &=
    C_{V}^{<, 0} n_{0} + C_{V}^{<, 1} n_{1} + \bm{e}_{V}^{<}(s_{0}, s_{1}).
  \end{align}
\end{lemma}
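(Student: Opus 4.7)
The plan is to reduce to the indecomposable case and then perform a direct computation by pulling integer parts out of the ceiling functions. By the definitions of $\bm{e}_{V}^{>}$, $\bm{e}_{V}^{<}$, $A_{V}$, $B_{V}$ as sums over the indecomposable summands $V_{d_{\alpha}}$ of $V$, both sides of each claimed identity are additive with respect to direct sums. Hence it suffices to prove the statement when $V = V_{d}$ is indecomposable of dimension $d$ with $0 < d \leq p^{2}$; in this case we need to show
\begin{align}
  \bm{e}_{d}^{>}(n_{0} p^{2} + s_{0})
  &= \bigl(p A_{d} + (p^{2} - p + 1) B_{d}\bigr) n_{0} + \bm{e}_{d}^{>}(s_{0}), \\
  \bm{e}_{d}^{<}(n_{0} p^{2} + s_{0}, n_{1} p^{2} + s_{1})
  &= \bigl(p A_{d} - (p - 1) B_{d}\bigr) n_{0} + p B_{d} n_{1} + \bm{e}_{d}^{<}(s_{0}, s_{1}).
\end{align}

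For the first identity, I would substitute $j_{0} = n_{0} p^{2} + s_{0}$ into the numerator $p i_{0} j_{0} + (p^{2} - p + 1) i_{1} j_{0}$ and split it as
\begin{equation}
  p^{2} \bigl(p i_{0} n_{0} + (p^{2} - p + 1) i_{1} n_{0}\bigr) + \bigl(p i_{0} s_{0} + (p^{2} - p + 1) i_{1} s_{0}\bigr).
\end{equation}
Since the first summand is an integer multiple of $p^{2}$, the elementary identity $\lceil (p^{2} N + M)/p^{2} \rceil = N + \lceil M / p^{2} \rceil$ (valid for $N \in \setZ$) lets me pull it out of the ceiling; summing the pulled-out terms over $(i_{0}, i_{1})$ with $0 \le i_{0}, i_{1} < p$ and $i_{0} + p i_{1} < d$ produces exactly $p n_{0} A_{d} + (p^{2} - p + 1) n_{0} B_{d}$ by the definitions of $A_{d}$ and $B_{d}$, while the remaining ceiling sum is $\bm{e}_{d}^{>}(s_{0})$.

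For the second identity I proceed analogously: substituting $j_{0} = n_{0} p^{2} + s_{0}$ and $j_{1} = n_{1} p^{2} + s_{1}$ into the numerator $p i_{0} j_{0} + (-(p - 1) j_{0} + p j_{1}) i_{1}$ and expanding, the $n_{0}$, $n_{1}$ contributions combine to
\begin{equation}
  p^{2}\bigl(p i_{0} n_{0} - (p - 1) n_{0} i_{1} + p n_{1} i_{1}\bigr),
\end{equation}
which again slides out of the ceiling as an integer. Summing over the same index set yields $n_{0}(p A_{d} - (p - 1) B_{d}) + n_{1} p B_{d}$, and the residual ceiling sum is $\bm{e}_{d}^{<}(s_{0}, s_{1})$.

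There is essentially no obstacle here beyond bookkeeping; the only thing to be careful about is checking that the coefficients of $n_{0}$ and $n_{1}$ inside the ceiling really are integers (which they are, being integer linear combinations of $i_{0}, i_{1}, n_{0}, n_{1}$), so that the identity $\lceil (p^{2} N + M)/p^{2} \rceil = N + \lceil M/p^{2} \rceil$ applies. Summing the identities for each indecomposable summand $V_{d_{\alpha}}$ then gives the general formulas with $C_{V}^{>}$, $C_{V}^{<, 0}$, $C_{V}^{<, 1}$ in place of their indecomposable counterparts.
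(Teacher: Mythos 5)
Your proof is correct and follows essentially the same route as the paper's: reduce to an indecomposable summand \(V_{d}\) by additivity, substitute \(j_{i} = n_{i}p^{2} + s_{i}\), and pull the resulting integer multiple of \(p^{2}\) out of each ceiling via \(\ceil{(p^{2}N + M)/p^{2}} = N + \ceil{M/p^{2}}\), so that summing over the index set produces the \(A_{d}\), \(B_{d}\) (hence \(C\)) coefficients. Nothing further is needed.
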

\begin{proof}
  Without loss of generality, we may assume that \(V = V_{d}\).  By direct computation, we get
  \begin{align}
    &
      \bm{e}_{d}^{>}(n_{0} p^{2} + s_{0}) \\
    &= \sum_{\substack{0 \leq i_{0}, i_{1} < p, \\ 0 \leq i_{0} + i_{1} p < d}}
    \ceil{\frac{%
      p i_{0} (n_{0}p^{2} + s_{0}) + (p^{2} - p + 1) i_{1} (n_{0}p^{2} + s_{0})%
    }{p^{2}}} \\
    &= \sum_{\substack{0 \leq i_{0}, i_{1} < p, \\ 0 \leq i_{0} + i_{1} p < d}}
      \ceil{p i_{0} n_{0} + (p^{2} - p + 1) i_{1} n_{0}
      + \frac{p i_{0} s_{0} + (p^{2} - p + 1) i_{1} s_{0}}{p^{2}}} \\
    &= \left(
      p \sum_{\substack{0 \leq i_{0}, i_{1} < p, \\ 0 \leq i_{0} + i_{1} p < d}} i_{0}
      + (p^{2} - p + 1)
        \sum_{\substack{0 \leq i_{0}, i_{1} < p, \\ 0 \leq i_{0} + i_{1} p < d}} i_{1}
      \right) n_{0} + \bm{e}_{d}^{>}(s_{0})\\
    &=
      (p A_{d}
      + (p^{2} - p + 1)B_d) n_{0} + \bm{e}_{d}^{>}(s_{0}) \\
    &= C_{V_{d}}^{>} \cdot n_{0} + \bm{e}_{d}^{>}(s_{0}),
  \end{align}
  which induces the first equality.  Similarly, we get
  \begin{align}
    &
      \bm{e}_{d}^{<}(n_{0} p^{2} + s_{0}, n_{1} p^{2} + s_{1}) \\
    &= \sum_{\substack{0 \leq i_{0}, i_{1} < p, \\ 0 \leq i_{0} + i_{1} p < d}}
      \ceil{
        \frac{
          p i_{0} (n_{0}p^{2} + s_{0}) + (-(p - 1)(n_{0}p^{2} + p (n_{1}p^{2} + s_{1}))) i_{1}
        }{p^{2}}} \\
    &= \sum_{\substack{0 \leq i_{0}, i_{1} < p, \\ 0 \leq i_{0} + i_{1} p < d}}
      \ceil{
        \left(p i_{0} - (p - 1) i_{1}\right) n_{0}
        + p i_{1} n_{1}
        + \frac{
            p i_{0} s_{0} + (-(p - 1) s_{0} + p s_{1}) i_{1}
          }{p^{2}}} \\
    &= (p A_{d} - (p - 1) B_{d}) n_{0} + p B_{d} + \bm{e}_{d}^{<}(s_{0}, s_{1}) \\
    &=
      C_{V_{d}}^{<, 0} \cdot n_{0} + C_{V_{d}}^{<, 1} \cdot n_{1}
      + \bm{e}_{d}^{<}(s_{0}, s_{1}),
  \end{align}
  which completes the proof.
\end{proof}

We will need the following upper and lower bounds of \(A_{d}\) and \(B_{d}\) later.

\begin{lemma}\label{lem:bounds-for-A&B}
  With notations as above, we have
  \begin{align}
    \frac{d (p - 1)}{2} - \frac{p^{2}}{8} &\leq A_{d} \leq \frac{d (p - 1)}{2}, \\
    \frac{d (d - p)}{2p} &\leq B_{d} \leq \frac{d (d - p)}{2p} + \frac{p}{8}.
  \end{align}
\end{lemma}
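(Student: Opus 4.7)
The plan is to compute the two differences $\tfrac{d(p-1)}{2} - A_d$ and $B_d - \tfrac{d(d-p)}{2p}$ explicitly in closed form, and then control them with the elementary inequality $0 \le r(p-r) \le p^{2}/4$ (valid for $0 \le r \le p$ by AM--GM, with maximum at $r = p/2$).

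First I would substitute $d = qp + r$ into the right-hand side of the first asserted bound. This gives $\tfrac{d(p-1)}{2} = \tfrac{qp(p-1)}{2} + \tfrac{r(p-1)}{2}$, so subtracting $A_{d} = \tfrac{qp(p-1)}{2} + \tfrac{r(r-1)}{2}$ collapses the $q$-dependence and yields the identity
\begin{equation}
  \tfrac{d(p-1)}{2} - A_{d} = \tfrac{r(p-1) - r(r-1)}{2} = \tfrac{r(p-r)}{2}.
\end{equation}
The elementary bound $0 \le r(p-r) \le p^{2}/4$ immediately gives both halves of the first inequality.

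Next I would do the analogous computation for $B_d$. Expanding $(qp+r)((q-1)p+r)$ gives
\begin{equation}
  \tfrac{d(d-p)}{2p} = \tfrac{qp(q-1)}{2} + \tfrac{(2q-1)r}{2} + \tfrac{r^{2}}{2p},
\end{equation}
so subtracting this from $B_{d} = \tfrac{qp(q-1)}{2} + qr$ again cancels the leading $q$-terms and produces
\begin{equation}
  B_{d} - \tfrac{d(d-p)}{2p} = qr - \tfrac{(2q-1)r}{2} - \tfrac{r^{2}}{2p} = \tfrac{r}{2} - \tfrac{r^{2}}{2p} = \tfrac{r(p-r)}{2p}.
\end{equation}
Applying $0 \le r(p-r) \le p^{2}/4$ once more gives the bound $0 \le B_{d} - \tfrac{d(d-p)}{2p} \le p/8$, which is exactly what is claimed.

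There is no real obstacle here; the only thing to watch is the algebraic manipulation of the expansion of $(qp+r)((q-1)p+r)$, which must be carried out carefully so that the $q$-dependent terms cancel exactly against $B_d$, leaving the clean expression $\tfrac{r(p-r)}{2p}$. Once that identity is in hand, both asserted ranges reduce to the single fact that the quadratic $r \mapsto r(p-r)$ on $[0,p]$ lies in $[0, p^{2}/4]$.
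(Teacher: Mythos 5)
Your proof is correct and follows essentially the same route as the paper: both compute the exact differences \(\tfrac{d(p-1)}{2} - A_{d} = \tfrac{r(p-r)}{2}\) and \(B_{d} - \tfrac{d(d-p)}{2p} = \tfrac{r(p-r)}{2p}\), and then bound \(r(p-r)\) by \(p^{2}/4\) via AM--GM. The algebra checks out, so nothing further is needed.
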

\begin{proof}
  By definition, we have
  \begin{align}
    A_{d} &= \frac{(d - r)(p - 1)}{2} + \frac{r (r - 1)}{2} \\
          &= \frac{d (p - 1)}{2} + \frac{r (r - p)}{2}.
  \end{align}
  From the inequality of arithmetic and geometric means,
  \begin{align}
    0 & \geq r(r-p)/2               \\
      & \geq - {(r + (r - p))}^{2} /8 \\
      & = -p^{2} / 8.
  \end{align}
  Therefore, we get
  \begin{equation}
    \frac{d (p - 1)}{2} - \frac{p^{2}}{8} \leq A_{d} \leq \frac{d (p - 1)}{2}.
  \end{equation}

  Similarly, we have
  \begin{align}
    B_{d} &= \frac{(d - r) \left(\frac{d - r}{p} - 1\right)}{2} + \frac{d - r}{p} r \\
          &= \frac{d (d - p)}{2p} - \frac{r (r - p)}{2p},
  \end{align}
  which completes the proof.
\end{proof}

\subsection{A criterion for convergence}
With the notation above, we state the main result of this section as follows.

\begin{theorem}\label{thm:cond-converge}
  The integral \(\int_{\GCov(D)} \mtvL^{d - \bm{v}}\) converges if and only if the
  following inequalities hold:
  \begin{align}
    B_{V} &\geq p, \\
    C_{V}^{>} &\geq p^{3} - p + 1.
  \end{align}
\end{theorem}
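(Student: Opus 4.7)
The plan is to reduce the convergence problem to a lattice-cone analysis of a piecewise-linear dimension function. From the stratum-by-stratum formula
\begin{equation}
  \int_{\GCov(D)}\mtvL^{d-\bm{v}} = \sum_{\vect{j}} \nu(\GCov(D;\vect{j}))\,\mtvL^{d-\bm{v}(\vect{j})}
\end{equation}
and the identity $\dim \nu(\GCov(D;\vect{j})) = \sum_{j_l\ne -\infty}(j_l - \floor{j_l/p})$, convergence in $\rngMhat'$ amounts to the dimension exponent $e(\vect{j}) \coloneq \sum_{j_l \ne -\infty}(j_l - \floor{j_l/p}) + d - \bm{v}(\vect{j})$ tending to $-\infty$ as $|\vect{j}| \to \infty$. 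I would first split the index set ${(\setN'\cup\{-\infty\})}^2$ into connected strata ($j_0 \in \setN'$) and disconnected strata ($j_0 = -\infty$): on the disconnected strata the cover decomposes into connected components with proper stabilizer $H \subset G$, and the reduction $\bm{v}_V(E) = \bm{v}_W(E')$ from \cref{sec:v-function} turns their contribution into a $\setZ/p\setZ$- or trivial-cover integral, which converges under the one-variable criterion of \cite{Yasuda2014:p-Cyclic} already implied by $B_V \ge p$.

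For the connected strata I would further split into the two cones $pj_0 > j_1$ and $pj_0 < j_1$, on which \cref{thm:v-function} and \cref{sec:some-invariants} identify $\bm{v}(\vect{j})$ with $\bm{e}_V^>(j_0)$ and $\bm{e}_V^<(j_0, j_1)$ respectively. Writing $j_l = n_l p^2 + s_l$ with $0 \le s_l < p^2$ and applying \cref{lem:reminder1}, $e(\vect{j})$ becomes a $\setZ$-linear form in $(n_0, n_1)$ plus a remainder depending only on $(s_0, s_1)$ that is uniformly bounded across all residue classes by \cref{lem:bounds-for-A&B}. The linear part reads
\begin{equation}
  n_0(p(p-1)-C_V^>) + n_1\, p(p-1)
\end{equation}
in the first cone and
\begin{equation}
  n_0(p(p-1) - C_V^{<,0}) + n_1(p(p-1) - C_V^{<,1})
\end{equation}
in the second; convergence reduces to strict negativity of these forms on all lattice directions within their respective cones.

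The core computation is to identify the extremal rays: the common boundary $n_1 = p n_0$ and the coordinate axis $n_0 = 0$ of the second cone. Along $n_1 = p n_0$ both forms yield the slope $p^3 - p - C_V^>$, where consistency between the two cones uses the identity $C_V^{<,0} + p\,C_V^{<,1} = C_V^>$, which is immediate from the definitions in \cref{sec:some-invariants}; this yields the criterion $C_V^> \ge p^3 - p + 1$. Along $n_0 = 0$ in the second cone the slope is $p(p-1) - p B_V$, giving $B_V \ge p$. Every other lattice ray in either cone is a non-negative combination of these two extremal rays, so sufficiency follows; necessity comes from exhibiting explicit divergent sequences along the extremal rays when either inequality fails. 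I expect the main obstacle to be the uniform control of the periodic remainder terms over all $p^2 \times p^2$ residue classes and the careful treatment of indices near the common boundary $pj_0 = j_1$, where a limiting argument is needed to ensure that the two sub-sums match up without double-counting or gap effects.
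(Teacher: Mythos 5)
Your proposal is correct and follows essentially the same route as the paper: the same four-way decomposition of the index set, the same reduction of the $j_0=-\infty$ strata to the $\setZ/p\setZ$ criterion $D_W = B_V \ge p$, the same use of \cref{lem:reminder1} to split off a linear part modulo bounded remainders, and the same identities $C_V^{>}=C_V^{<,0}+pC_V^{<,1}$ and $C_V^{<,1}=pB_V$ to discard the redundant conditions. Your extremal-ray formulation of the two cones is just a repackaging of the paper's steps of summing over $j_1$ first when $pj_0>j_1$ and sandwiching $f(j_0,j_1)$ between $f(1,j_1)$ and $f(\floor{j_1/p},j_1)$ when $pj_0<j_1$, and the boundary $pj_0=j_1$ you worry about never occurs since $p\nmid j_1$.
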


\begin{corollary}\label{prop:cond-klt}
  Let \(X \coloneq V/G\) be the quotient and \(\Delta\) the \(\setQ\)-Weil divisor on \(X\)
  such that \(V \to (X, \Delta)\) is crepant.  If the inequalities \(B_{V} \geq p\) and
  \(C_{V}^{>} \geq p^{3} - p + 1\) holds, then the pair \((X, \Delta)\) is klt.  Furthermore,
  if there exists a log resolution of \((X, \Delta)\), then the converse is also true.
\end{corollary}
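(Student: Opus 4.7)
The plan is to deduce this corollary by combining three ingredients already in play: the wild McKay correspondence (\cref{Yasuda2019:Cor14.4}), the convergence criterion for the motivic integral on the \(G\)-cover side (\cref{thm:cond-converge}), and the general link between convergence of the stringy motive and log terminality (\cref{Yasuda2014:Prop6.6}). Concretely, \cref{Yasuda2019:Cor14.4} tells us that \(\Mst(X, \Delta) = \int_{\GCov(D)} \mtvL^{d - \bm{v}}\), so the two sides converge together. Therefore \cref{thm:cond-converge} translates directly into the statement that \(\Mst(X, \Delta)\) converges if and only if \(B_{V} \geq p\) and \(C_{V}^{>} \geq p^{3} - p + 1\).

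Given this equivalence, the forward direction is immediate: \cref{Yasuda2014:Prop6.6} asserts that convergence of \(\Mst(X, \Delta)\) implies that \((X, \Delta)\) is klt, so the inequalities \(B_{V} \geq p\) and \(C_{V}^{>} \geq p^{3} - p + 1\) are sufficient for kltness.

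For the converse under the log resolution hypothesis, I would invoke the second half of \cref{Yasuda2014:Prop6.6}: if a resolution \(f \colon Y \to X\) exists such that \(K_{Y} - f^{*}(K_{X} + \Delta)\) is a simple normal crossing \(\setQ\)-Cartier divisor, then \((X, \Delta)\) being klt forces \(\Mst(X, \Delta)\) to converge. The only thing to verify here is that a log resolution of \((X, \Delta)\) in the sense of the corollary statement supplies exactly such an \(f\); this is essentially definitional, since a log resolution by convention makes the pull-back of the boundary plus the exceptional divisor simple normal crossing, so that the \(\setQ\)-Cartier discrepancy divisor \(K_{Y} - f^{*}(K_{X} + \Delta)\) has simple normal crossing support on \(Y\). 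Once this compatibility is recorded, convergence of \(\Mst(X, \Delta)\) follows from kltness, and then \cref{thm:cond-converge} yields the required inequalities.

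The only place where genuine care is needed is this last compatibility check between "log resolution of \((X, \Delta)\)" and the slightly more technical SNC hypothesis used in \cref{Yasuda2014:Prop6.6}; the rest is a formal chain of equivalences. I expect no further obstacles, since everything else is an application of results already established in the paper and in \cite{Yasuda2014:p-Cyclic,Yasuda2019:Motivic}.
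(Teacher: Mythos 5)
Your proposal is correct and follows essentially the same route as the paper's own proof: the inequalities give convergence of \(\int_{\GCov(D)} \mtvL^{d - \bm{v}}\) by \cref{thm:cond-converge}, hence of \(\Mst(X,\Delta)\) by \cref{Yasuda2019:Cor14.4}, and then both directions follow from \cref{Yasuda2014:Prop6.6}. Your extra remark on matching the log-resolution hypothesis with the SNC condition in \cref{Yasuda2014:Prop6.6} is a reasonable (and essentially definitional) point that the paper leaves implicit.
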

\begin{proof}
  If the inequalities holds, then the integral \(\int_{\GCov(D)} \mtvL^{d - \bm{v}}\) converges
  from \cref{thm:cond-converge} and hence the stringy motive \(\Mst(X, \Delta)\) also
  converges from \cref{Yasuda2019:Cor14.4}.  We can prove the claim
  from \cref{Yasuda2014:Prop6.6}.
\end{proof}

The rest of this subsection is devoted to the proof of \cref{thm:cond-converge}.

According to the decomposition of \({(\setN' \cup \{-\infty\})}^2\) into four parts
\begin{equation}
  \{(-\infty,-\infty)\}, \, \{(-\infty,j)\mid j\in \setN'\}, \,
 \{(j_0,j_1)\mid pj_0 >j_1 \} , \,
 \{(j_0,j_1)\mid j_0\ne -\infty ,\, pj_0 < j_1 \},
\end{equation}
we divide the integral over \(\GCov(D)\) into four parts:
\begin{align}
  \int_{\GCov(D)} \mtvL^{d - \bm{v}} =
  & \mtvL^{d} \\
\label{1st-sum}  &+ \sum_{j}
    \nu(\GCov(D; -\infty, j)) \mtvL^{d - \bm{v}(-\infty, j)} \\
\label{2nd-sum}  &+ \sum_{ pj_{0} > j_{1}}
    \nu(\GCov(D; j_{0}, j_{1})) \mtvL^{d - \bm{v}(j_{0}, j_{1})} \\
\label{3rd-sum}  &+ \sum_{ pj_{0} < j_{1}}
    \nu(\GCov(D; j_{0}, j_{1})) \mtvL^{d - \bm{v}(j_{0}, j_{1})}.
\end{align}
The integral converges if and only if all the three sums on the right hand side converge.  We
will study convergence of these sums in turn.

\subsubsection{Sum \cref{1st-sum}}\label{sec:1st-sum}
This part corresponds to the \(G\)-covers of \(D\) which have \(p\) connected components.
Each component is then an \(H\)-cover with \(H \subset G\) the subgroup of order \(p\).  Let
\(E\) be such a \(G\)-cover and let \(E'\) be a connected component of it.  We have
\begin{equation}
  \bm{v}(E)= \bm{v}'(E'),
\end{equation}
where \(\bm{v}'\) denotes the \(\bm{v}\)-function \(\bm{v}' \colon \GCov[H](D) \to \setQ\)
associated to the induced \(H\)-action on \(\schA_k^d\).

By \cref{lem:Jordan-form-of-p-power}, we find that the restriction of the indecomposable
\(G\)-representation \(V_{d_{\alpha}}\) to \(H\) is isomorphic to
\begin{equation}\label{eq:induced-subrepresentation}
  W_{q_{\alpha} + 1}^{\oplus r_{\alpha}} \oplus W_{q_{\alpha}}^{\oplus p - r_{\alpha}},
\end{equation}
where \(d_{\alpha} = q_{\alpha} p + r_{\alpha}\) (\(0 \leq r_{\alpha} < p\)) and \(W_e\)
denotes the indecomposable \(H\)-representation of dimension \(e\).  Therefore,
by~\cite[Proposition~6.9]{Yasuda2014:p-Cyclic}, we find that the infinite sum
\begin{equation}
  \sum_{j \in \setN'} \nu(\GCov(D; -\infty, j)) \mtvL^{d - \vect{v}(j)}
  = \sum_{j \in \setN'} \nu(\GCov[H](D; j)) \mtvL^{d - \bm{v}'(j)}
\end{equation}
converges if and only if the inequality \(D_{W} \geq p\) holds, where \(D_{W}\) is the
invariant defined in~{\cite[Definition~6.8]{Yasuda2014:p-Cyclic}}.  By definition, we have
\begin{align}
  D_{W}
  &=
    \sum_{\alpha = 1}^{a} \left( r_{\alpha} \frac{(q_{\alpha} + 1) q_{\alpha}}{2}
    + (p - r_{\alpha}) \frac{q_{\alpha} (q_{\alpha} - 1)}{2} \right) \\
  &=
    \sum_{\alpha = 1}^{a} \left( \frac{q_{\alpha} p (q_{\alpha} - 1)}{2}
    + q_{\alpha} r_{\alpha} \right) \\
  &=
    B_{V}.
\end{align}
Therefore, the infinite sum~\cref{1st-sum} converges if and only if the inequality
\begin{equation}
  B_{V} \geq p \label{eq:cond-converge1}
\end{equation}
holds.

\subsubsection{Sum \cref{2nd-sum}}
Assume \(j_{0} \neq -\infty\).  Applying \cref{thm:v-function}, we have
\begin{equation}
  \bm{v}(j_{0}, j_{1}) =
  \begin{cases}
    \bm{e}_{V}^{>}(j_{0}) & \text{if \(pj_{0} > j_{1}\)}, \\
    \bm{e}_{V}^{<}(j_{0}, j_{1}) & \text{if \(pj_{0} < j_{1}\)}.
  \end{cases}
\end{equation}

Firstly, we shall consider the case \(pj_{0} > j_{1}\).  We shall compute the infinite sum
\begin{align}
  &
    \sum_{\substack{j_{0}, j_{1} \in \setN' \cup \{- \infty\}, \\ pj_{0} > j_{1}}}
    \nu(\GCov(D; j_{0}, j_{1})) \mtvL^{d - \bm{v}(j_{0}, j_{1})} \label{part:pj_0>j_1} \\
  &=
    \sum_{j_{0} \in \setN'} \nu(\GCov(D; j_{0}, - \infty)) \mtvL^{d - \bm{e}_{V}^{>}(j_{0})}
    + \sum_{\substack{j_{0}, j_{1} \in \setN', \\ pj_{0} > j_{1}}}
    \nu(\GCov(D; j_{0}, j_{1})) \mtvL^{d - \bm{e}_{V}^{>}(j_{0})}.
\end{align}
Since \(\dim \nu(\GCov(D; j_{0}, - \infty)) < \dim \nu(\GCov(D; j_{0}, j_{1}))\)
(\(j_{1} > - \infty\)), the first sum of the right hand side converges whenever the second sum
converges.  Thus it is enough to study the convergence of the second sum.  We have
\begin{align}
  &
    \sum_{\substack{j_{0}, j_{1} \in \setN', \\ pj_{0} > j_{1}}}
    \nu(\GCov(D; j_{0}, j_{1})) \mtvL^{d - \bm{e}_{V}^{>}(j_{0})} \\
  &=
    \sum_{\substack{j_{0}, j_{1} \in \setN', \\ pj_{0} > j_{1}}}
    {(\mtvL - 1)}^{2} \mtvL^{-2} \mtvL^{j_{0} - \floor{j_{0}/p} + j_{1} - \floor{j_{1}/p}}
    \mtvL^{d - \bm{e}_{V}^{>}(j_{0})} \\
  &= {(\mtvL - 1)}^{2} \mtvL^{d-2}
    \sum_{j_{0} \in \setN'}
      \mtvL^{j_{0} - \floor{j_{0}/p} - \bm{e}_{V}^{>}(j_{0})}
    \sum_{\substack{j_{1} = 1, \\ p \nmid j_{1}}}^{pj_{0} - 1}
      \mtvL^{j_{1} - \floor{j_{1}/p}}.
\end{align}
Since \(j_1-\floor{j_1/p}\) is an increasing function in \(j_{1}\),
\begin{equation}
  \dim \sum_{\substack{j_{1} = 1, \\ p \nmid j_{1}}}^{pj_{0} - 1}
  \mtvL^{j_{1} - \floor{j_{1} / p}} = (p j_{0} - 1)-\floor{(p j_{0} - 1) / p} = (p - 1) j_{0}.
\end{equation}
Therefore sum \cref{2nd-sum} converges if and only if
\begin{equation}
 j_{0} - \floor{j_{0} / p} - \bm{e}_{V}^{>}(j_{0}) + (p - 1) j_{0}
\end{equation}
tends to \(- \infty\) as \(j_{0}\) tends to \(\infty\).  From \cref{lem:reminder1}, this
function in \(j_0\) is equivalent to the following one modulo bounded functions:
\begin{equation}
  j_{0} - j_{0} / p - C_{V}^{>} j_{0} / p^{2} + (p - 1) j_{0} =
  p^{-2} j_{0} (p^{3} - p - C_{V}^{>}).
\end{equation}

We conclude:

\begin{proposition}
  The infinite sum~\cref{2nd-sum} converges if and only if the inequality
  \begin{equation}\label{eq:cond-converge2}
  p^{3} - p - C_{V}^{>} \leq -1
  \end{equation}
  holds.
\end{proposition}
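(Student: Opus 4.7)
The proof will essentially collect and formalize the computation already outlined above the proposition statement. My plan is to show that the $j_{1} = -\infty$ contribution is dominated by the $j_{1} \in \setN'$ contribution (so it suffices to control the latter), then to explicitly compute the dimension of each term as a function of $j_{0}$ alone, and finally to identify the convergence threshold.

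First, I would split the sum~\cref{2nd-sum} according to whether $j_{1} = -\infty$ or $j_{1} \in \setN'$. For any fixed $j_{0} \in \setN'$, the dimension of $\nu(\GCov(D; j_{0}, -\infty))$ is strictly smaller than the dimensions of $\nu(\GCov(D; j_{0}, j_{1}))$ for $j_{1} \in \setN'$ satisfying $pj_{0} > j_{1}$, since the latter carries an extra factor $(\mtvL - 1)\mtvL^{j_{1} - 1 - \floor{j_{1}/p}}$. Since $\bm{v}(j_{0}, j_{1}) = \bm{e}_{V}^{>}(j_{0})$ depends only on $j_{0}$ in this regime, convergence of the $j_{1} = -\infty$ part is implied by convergence of the main part, so I can focus on the main sum.

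Next, I would substitute the explicit formula for $\nu(\GCov(D; j_{0}, j_{1}))$ and factor the sum as
\begin{equation}
(\mtvL - 1)^{2} \mtvL^{d - 2} \sum_{j_{0} \in \setN'} \mtvL^{j_{0} - \floor{j_{0}/p} - \bm{e}_{V}^{>}(j_{0})} \sum_{\substack{j_{1} = 1 \\ p \nmid j_{1}}}^{pj_{0} - 1} \mtvL^{j_{1} - \floor{j_{1}/p}}.
\end{equation}
The inner finite sum has top-order contribution at $j_{1} = pj_{0} - 1$ with dimension $(pj_{0} - 1) - \floor{(pj_{0} - 1)/p} = (p - 1) j_{0}$. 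Hence the outer series converges in $\rngMhat'$ if and only if the exponent
\begin{equation}
j_{0} - \floor{j_{0}/p} - \bm{e}_{V}^{>}(j_{0}) + (p - 1) j_{0}
\end{equation}
tends to $-\infty$ as $j_{0} \to \infty$.

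The final step is to extract the leading linear behaviour. Writing $j_{0} = n_{0} p^{2} + s_{0}$ with $0 \leq s_{0} < p^{2}$, \cref{lem:reminder1} gives $\bm{e}_{V}^{>}(j_{0}) = C_{V}^{>} n_{0} + \bm{e}_{V}^{>}(s_{0})$, so modulo a bounded function of $j_{0}$ the exponent equals
\begin{equation}
j_{0} - \tfrac{j_{0}}{p} - \tfrac{C_{V}^{>}}{p^{2}} j_{0} + (p - 1) j_{0} = \tfrac{j_{0}}{p^{2}} \bigl(p^{3} - p - C_{V}^{>}\bigr).
\end{equation}
This tends to $-\infty$ if and only if the integer coefficient $p^{3} - p - C_{V}^{>}$ is strictly negative, i.e.\ $p^{3} - p - C_{V}^{>} \leq -1$. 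The only subtle point — and the step I would double-check carefully — is the claim that the difference between $\bm{e}_{V}^{>}(j_{0})$ and its linear part stays bounded, but this is precisely the content of \cref{lem:reminder1} applied to the periodic remainder $\bm{e}_{V}^{>}(s_{0})$, which takes only finitely many values.
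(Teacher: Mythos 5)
Your proposal is correct and follows essentially the same route as the paper: the same splitting off of the $j_{1} = -\infty$ part by a dimension comparison, the same factorization of the double sum with the inner sum's top term at $j_{1} = pj_{0} - 1$ contributing dimension $(p-1)j_{0}$, and the same application of \cref{lem:reminder1} to reduce the exponent to $p^{-2}j_{0}(p^{3} - p - C_{V}^{>})$ modulo bounded functions. No gaps.
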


\subsubsection{Sum \cref{3rd-sum}}
Sum \cref{3rd-sum} converges if and only if
\begin{equation}
  j_{0} - \floor{j_{0}/p} + j_{1} - \floor{j_{1}/p} - \bm{e}_{V}^{<}(j_{0}, j_{1}) \quad
  \left(
    = \dim \nu(\GCov(D; j_{0}, j_{1})) \mtvL^{d - \bm{v}(j_{0}, j_{1})} - d
  \right)
\end{equation}
tends to \(-\infty\) as \(j_0 + j_1\) tends to \(\infty\).  This function in \(j_0\) and
\(j_1\) is equivalent to the following one modulo bounded functions:
\begin{equation}
  f(j_{0}, j_{1}) \coloneq j_{0} \left(1 - p^{-1} -C^{<, 0} p^{-2} \right)
  + j_{1} \left(1 - p^{-1} - C^{<, 1} p^{-2} \right).
\end{equation}
We observe that \(f(j_0, j_1)\) lies between \(f(1, j_1)\) and \(f(\floor{j_{1} / p}, j_1)\).
Thus Sum \cref{3rd-sum} converges if and only if both \(f(1, j_1)\) and
\(f(\floor{j_{1} / p}, j_1)\) tend to \(-\infty\) as \(j_1\) tends to \(\infty\).  Since,
modulo bounded functions,
\begin{align}
  f(1,j_1) & \equiv j_1\left(1-p^{-1}-C^{<,1}p^{-2} \right)
\end{align}
and
\begin{align}
  f(\floor{j_{1} / p}, j_{1})
  &\equiv
    j_{1} p^{-1} \left(1 - p^{-1} - C^{<, 0} p^{-2} \right)
    + j_{1} \left(1 - p^{-1} - C_{V}^{<, 1} p^{-2} \right) \\
  &\equiv
    j_{1} \left(1 - p^{-2} - C_{V}^{<, 1} p^{-2} - C_{V}^{<, 0} p^{-3} \right),
\end{align}
we conclude:

\begin{proposition}
  The infinite sum~\cref{3rd-sum} converges if and only if the following inequalities hold:
  \begin{align}
  p^{2} - p - C_{V}^{<, 1} \leq -1, \label{eq:cond-converge3}\\
  p^{3} - p - C_{V}^{<, 0} - p C_{V}^{<, 1} \leq -1. \label{eq:cond-converge4}
  \end{align}
\end{proposition}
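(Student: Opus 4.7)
The plan is to directly exploit the reduction established in the paragraph immediately preceding the proposition statement. That analysis has already reduced the convergence of sum~\cref{3rd-sum} to requiring both $f(1,j_1) \to -\infty$ and $f(\floor{j_1/p},j_1) \to -\infty$ as $j_1 \to \infty$, and has computed explicit expressions for these two functions modulo bounded terms.

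Since each of these expressions is a linear function of $j_1$ up to bounded error, each tends to $-\infty$ if and only if its leading coefficient in $j_1$ is strictly negative. Applied to $f(1,j_1) \equiv j_1 (1 - p^{-1} - C_V^{<,1} p^{-2})$, strict negativity becomes, after multiplying through by $p^2$, the inequality $p^2 - p - C_V^{<,1} < 0$. The key additional observation is that $p^2 - p - C_V^{<,1} = p^2 - p - pB_V$ is an integer, so the strict inequality is equivalent to $p^2 - p - C_V^{<,1} \leq -1$, which is the first stated condition. The same reasoning applied to $f(\floor{j_1/p},j_1) \equiv j_1(1 - p^{-2} - C_V^{<,1}p^{-2} - C_V^{<,0} p^{-3})$, multiplied this time by $p^3$ and using the integrality of both $C_V^{<,0} = pA_V - (p-1)B_V$ and $C_V^{<,1} = pB_V$, yields the second inequality $p^3 - p - C_V^{<,0} - pC_V^{<,1} \leq -1$.

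The one delicate point to check is the boundary case where a coefficient equals zero: in that situation the relevant function in $j_1$ is bounded rather than tending to $-\infty$, so the associated contribution to the sum does not decay and divergence results. This confirms that the strict inequality is genuinely necessary and hence that the upgrade from $< 0$ to $\leq -1$ via integrality is sharp. Since the analytical heart of the argument (deriving $f$, sandwiching $f(j_0,j_1)$ between $f(1,j_1)$ and $f(\floor{j_1/p},j_1)$ via the monotonicity of the coefficient of $j_0$ and the range $1 \leq j_0 \leq \floor{j_1/p}$ enforced by $pj_0 < j_1$, and extracting the leading linear behavior) is already completed before the proposition is stated, I anticipate no further serious obstacle beyond this integrality bookkeeping.
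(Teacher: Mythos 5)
Your proposal is correct and takes essentially the same route as the paper: the paper's own justification is exactly the reduction to the two linearized functions \(f(1, j_{1})\) and \(f(\floor{j_{1}/p}, j_{1})\), the extraction of their leading coefficients in \(j_{1}\), and the passage from strict negativity to \(\leq -1\) using that \(C_{V}^{<,0}\) and \(C_{V}^{<,1}\) are integers (noted earlier in the section). The only difference is that you spell out the integrality bookkeeping and the zero-coefficient boundary case, which the paper leaves implicit.
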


\subsubsection{Completing the proof of \cref{thm:cond-converge}}\label{sec:proof:cond-converge}
We found that the integral \(\int_{\GCov(D)} \mtvL^{d - \vect{v}}\) converges if and only
if inequalities \cref{eq:cond-converge1,eq:cond-converge2,eq:cond-converge3,eq:cond-converge4}
hold.  Since \(C_{V}^{>} = C_{V}^{<, 0} + p C_{V}^{<, 1}\), \cref{eq:cond-converge2} holds if
and only if \cref{eq:cond-converge4} does.  Similarly, since \(C_{V}^{<, 1} = p B_{V}\),
\cref{eq:cond-converge1} implies \cref{eq:cond-converge3}. Thus conditions
\cref{eq:cond-converge3} and \cref{eq:cond-converge4} are redundant and the theorem follows.

\begin{remark}\label{rem:not-lc:p=2}
  Consider the \(G\)-representation
  \(V = V_{1}^{\oplus x} \oplus V_{2}^{\oplus y} \oplus V_{3}\) with characteristic \(p = 2\).
  Note that when \(G=\setZ/p^2\setZ\), the \(G\)-representations with a pseudo-reflection are
  exactly the ones of this form (\cref{cor:pseudo-ref}).  Let \(\Delta\) be the divisor on
  \(X = V/G\) such that \(V \to (X, \Delta)\) is crepant.  From \cref{prop:boundary},
  \(\Delta \) is irreducible and reduced.  We see that the pair \( (X, \Delta)\) is log
  canonical if and only if \(y>0\).  Indeed, by direct computation, we get
  \begin{gather}
    B_{V} = 1,             \quad C_{V}^{>} = 5 + 2y, \\
    C_{V}^{<, 0} = 1 + 2y, \quad C_{V}^{<, 1} = 4.
  \end{gather}
  If \(y = 0\), then \(C_{V}^{>} = 5\) and hence \(p^{3} - p - C_{V}^{>} = 1 > 0\).  Therefore
  sum \cref{2nd-sum} has a term of dimension arbitrarily large.  From a variant of
  \cref{prop:Yasuda2019Prop2.1} for log pairs (see also \cite[Corollary
  1.4]{Yasuda2019:Motivic}), we can show that \((X, \Delta)\) is not log canonical.  Assume
  now that \(y > 0\).  Then we have
  \begin{gather}
    p^{3} - p - C_{V}^{>} = 1 - 2y \leq -1, \\
    p^{2} - p - C_{V}^{<, 1} = -2 \leq -1, \\
    p^{3} - p - C_{V}^{<, 0} - p C_{V}^{<, 1} = -3 -2y \leq -1,
  \end{gather}
  and hence sums \cref{2nd-sum} and \cref{3rd-sum} all converges.  From the equality
  \( B_{V} = p-1 \) and from \cite[Corollary 1.4]{Yasuda2019:Discrepancies}, we also see that
  sum \cref{1st-sum} has terms of dimensions bounded above.  Again from the variant of
  \cref{prop:Yasuda2019Prop2.1}, \((X, \Delta)\) is log canonical.
\end{remark}

\subsection{Evaluation of discrepancies}
Using \cref{prop:Yasuda2019Prop2.1}, we can evaluate \(d(X)\) of the quotient
\(X = \schA_{k}^{d}/G\).  As in the paragraph after \cref{prop:Yasuda2019Prop2.1}, let
\(M_{\vect{j}}\) be a stratum of \(\spcJ X\) corresponding a stratum \(\GCov(D; \vect{j})\) of
\(\GCov(D)\).  Let us compute
\(\lambda(M_{\vect{j}}) = \dim \nu(\GCov(D; j_{0}, j_{1})) + d - \bm{v}(j_{0}, j_{1})\).

\subsubsection{The case \(j_{0} = - \infty\)}
From arguments in \cref{sec:1st-sum}, we have
\begin{align}
  \lambda(M_{(- \infty, j)})
  &=
    \dim \nu(\GCov(D; - \infty, j)) + d - \bm{v}(- \infty, j) \\
  &=
    \dim \nu(\GCov[H](D; j)) + d - \bm{v}'(j) \\
  &=
    \lambda(N_{j}),
\end{align}
where \(N_{j}\) is defined in the same way as \(M_{\vect{j}}\) for \(\schA_{k}^{d} / H\).
From~\cite[Equation~(3.1), (3.2)]{Yasuda2019:Discrepancies}, if \(p - 1 - B_{V} \leq 0\), then
we have
\begin{align}
  \sup_{j_{0} = - \infty} \lambda(M_{\vect{j}})
  &=
    b + \max_{1 \leq l_{1} < p} \{l_{1} - \sht_{W}(l_{1})\} \label{eq:sup:j0=-infty} \\
  &=
    d - B_{V} + \max_{1 \leq l_{1} < p}\{\sht_{W}(p - l_{1}) + l_{1}\},
\end{align}
where \(b\) denotes the number of the indecomposable direct summands of the induced
\(H\)-representation \(W\) and \(\sht_{W}\) the function associated to \(W\) defined as
follows.  For an indecomposable representation \(W_{e}\) of dimension \(e\), we define
\begin{equation}
  \sht_{W_{e}}(l) \coloneq
  \sum_{i = 1}^{e - 1} \floor{\frac{il}{p}}.
\end{equation}
In general, for the case \(W = \bigoplus_{e} W_{e}\), we define
\(\sht_{W} \coloneq \sum_{e} \sht_{W_{e}}\).  As for the value of \(b\), from
\cref{lem:Jordan-form-of-p-power}, we have
\begin{equation}
  b = \sum_{d_{\alpha} < p} d_{\alpha} + \sum_{d_{\alpha} \geq p} p.
\end{equation}
Similarly, as for the value of \(\sht_{W}(l)\), from \cref{eq:induced-subrepresentation}, we
have
\begin{equation}
  \sht_{W}(l) = \sum_{\alpha = 1}^{a}
  \left(
    r_{\alpha} \sum_{i = 1}^{q_{\alpha}} \floor{\frac{il}{p}}
    + (p - r_{\alpha}) \sum_{i = 1}^{q_{\alpha} - 1} \floor{\frac{il}{p}}
  \right).
\end{equation}
If \(p - 1 - B_{V} > 0\), then we have
\(\sup_{j_{0} = - \infty} \lambda(M_{\vect{j}})= \infty\).

\subsubsection{The case \(j_{0} \neq - \infty\)}
If \(j_{1} = - \infty\), then we have
\begin{equation}
  \GCov(D; j_{0}, - \infty) = \schG_{m} \times \schA^{j_{0} - 1 - \floor{j_{0}/p}}.
\end{equation}
Since \(\bm{v}(j_{0}, - \infty) = \bm{e}_{V}^{>}\) depends only on \(j_{0}\), thus
\(\lambda(M_{(j_{0}, - \infty)}) < \lambda(M_{(j_{0}, j_{1})})\) for
\(- \infty < j_{1} < pj_{0}\).  Therefore, we may assume that \(j_{1} \neq - \infty\) to
evaluate \(\sup \lambda(M_{\vect{j}})\).  Assuming \(j_{0}, j_{1} \neq - \infty\), if we write
\(j_{i} = n_{i} p^{2} + m_{i} p + l_{i}\) (\(i = 1, 2\); \(0 \leq m_{i} < p\),
\(1 \leq l_{i} < p\)), then we have
\begin{align}
  \lambda(M_{\vect{j}})
  &= d + j_{0} - \floor{j_{0}/p} + j_{1} - \floor{j_{1}/p} - \vect{v}(j_{0}, j_{1}) \\
  &= d + (p^{2} - p) n_{0} + (p - 1) m_{0} + l_{0}
    + (p^{2} - p) n_{1} + (p - 1) m_{1} + l_{1}
    - \vect{v}(j_{0}, j_{1}).
\end{align}

Firstly, we consider the case \(p j_{0} > j_{1}\).

\begin{lemma}\label{lem:sup:pj0>j1}
  We have
  \begin{equation}\label{eq:sup:pj0>j1}
  \sup_{pj_{0} > j_{1}} \lambda(M_{\vect{j}}) =
  \begin{dcases}
    d + \max \{%
    (p^{2} - 1) m_{0} + l_{0} p - \bm{e}_{V}^{>}(m_{0} p + l_{0})%
    \} & \text{if \(C_{V}^{>} \geq p^{3} - p\)}, \\
    \infty & \text{otherwise}.
  \end{dcases}
\end{equation}
\end{lemma}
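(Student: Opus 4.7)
The plan is to substitute the known formulas for $\dim\nu(\GCov(D;j_0,j_1))$ and $\bm{v}(j_0,j_1)$ into the expression for $\lambda(M_{\vect{j}})$, then to optimize first over $j_1$ and afterwards over $j_0$. With the decomposition $j_i = n_i p^2 + m_i p + l_i$ ($0 \le m_i < p$, $1 \le l_i < p$) used in the passage just before the lemma, one has $j_i - \floor{j_i/p} = (p^2-p)n_i + (p-1)m_i + l_i$. Under the condition $p j_0 > j_1$, \cref{thm:v-function} gives $\bm{v}(j_0,j_1) = \bm{e}_V^>(j_0)$, and \cref{lem:reminder1} rewrites this as $C_V^> n_0 + \bm{e}_V^>(m_0 p + l_0)$. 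Consequently $\lambda(M_{\vect{j}})$ splits as the sum of an $(n_0,m_0,l_0)$-dependent piece and a $j_1$-dependent piece equal to $j_1 - \floor{j_1/p}$.

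Next I would note that $j_1 - \floor{j_1/p}$ is strictly increasing in $j_1 \in \setN'$. Therefore for each fixed $j_0$ the supremum over $j_1 \in \setN'$ with $j_1 < p j_0$ is attained at $j_1 = p j_0 - 1$, which belongs to $\setN'$ since $p \mid p j_0$; the value is $(p-1) j_0$. Substituting $(p-1)j_0 = (p-1)(n_0 p^2 + m_0 p + l_0)$ back and collecting terms, I expect the coefficient of $n_0$ to become $(p^2-p) - C_V^> + (p-1)p^2 = p^3 - p - C_V^>$, while the $m_0$ and $l_0$ contributions combine into $(p^2-1) m_0 + p l_0 - \bm{e}_V^>(m_0 p + l_0)$.

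The final step is to take the supremum over $j_0 \in \setN'$, equivalently over $n_0 \geq 0$ together with a pair $(m_0, l_0)$ having $0 \leq m_0 < p$ and $1 \leq l_0 < p$. When $p^3 - p - C_V^> > 0$, sending $n_0 \to \infty$ produces $\sup = \infty$, which is the second case of the lemma. When $C_V^> \geq p^3 - p$, the $n_0$-coefficient is non-positive, so the supremum is attained at $n_0 = 0$ and reduces to $d + \max_{m_0,l_0}\{(p^2-1) m_0 + l_0 p - \bm{e}_V^>(m_0 p + l_0)\}$, as claimed. The only delicate point is the bookkeeping when regrouping the $n_0$-, $m_0$-, and $l_0$-coefficients; once \cref{thm:v-function} and \cref{lem:reminder1} are in hand, the remaining argument is essentially mechanical, so the main (mild) obstacle is simply keeping the affine decomposition of $j_0$ consistent throughout the computation.
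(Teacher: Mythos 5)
Your proposal is correct and follows essentially the same route as the paper: reduce to $j_{1} = pj_{0} - 1$ (the maximizer of $j_{1} - \floor{j_{1}/p}$ in the allowed range), apply \cref{lem:reminder1} to extract the coefficient $p^{3} - p - C_{V}^{>}$ of $n_{0}$, and then optimize over $n_{0}$. The only difference is that you spell out the monotonicity argument and the membership $pj_{0}-1 \in \setN'$ that the paper leaves implicit.
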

\begin{proof}
  Let \(pj_{0} > j_{1} \neq - \infty\).  The maximum value of \(\lambda(M_{\vect{j}})\)
  (\(pj_{0} > j_{1}\)) is given when \(j_{1} = pj_{0} - 1\), equivalently,
  \(n_{1} = n_{0} p + m_{0}\), \(m_{1} = l_{0} - 1\) and \(l_{1} = p - 1\).  By
  \cref{lem:reminder1}, we get
  \begin{equation}
    \lambda(M_{\vect{j}})
    = d + (p^{3} - p - C_{V}^{>}) n_{0} + (p^{2} - 1) m_{0} + l_{0} p
    - \bm{e}_{V}^{>}(m_{0}p + l_{0}).
  \end{equation}
  If \(p^{3} - p - C_{V}^{>} > 0\), then
  \(\sup_{pj_{0} > j_{1}} \lambda(M_{\vect{j}}) = \infty\).  Otherwise, we have
  \begin{equation}
    \sup_{pj_{0} > j_{1}}(\lambda(M_{\vect{j}})) =
    d + \max_{0 \leq m_{0}, m_{1} < p; 1 \leq l_{0}, l_{1} < p} \{%
    (p^{2} - 1) m_{0} + l_{0} p - \bm{e}_{V}^{>}(m_{0} p + l_{0})%
    \},
  \end{equation}
  which completes the proof.
\end{proof}

Secondly, we consider the case \(p j_{0} < j_{1}\).

\begin{lemma}\label{lem:sup:pj0<j1}
  If \(C_{V}^{<, 0} \geq p^{2} - p\) and \(C_{V}^{<, 1} \geq p^{2} - p\), then we have
  \begin{equation}\label{eq:sup:pj0<j1}
    \sup_{pj_{0} < j_{1}} \lambda(M_{\vect{j}}) =
    d + \max_{\substack{0 \leq m_{0}, m_{1} < p, \\ 1 \leq l_{0}, l_{1} < p}} \{
    (p - 1) m_{0} + l_{0} + (p - 1) m_{1} + l_{1}
    - \bm{e}_{V}^{<}(m_{0}p + l_{0}, m_{1}p + l_{1})
    \}.
  \end{equation}
  If \(C_{V}^{<, 1} < p^{2} - p\), then
  \(\sup_{pj_{0} < j_{1}} \lambda(M_{\vect{j}}) = \infty\).
\end{lemma}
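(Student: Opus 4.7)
My plan follows the template used in the proof of Lemma~\ref{lem:sup:pj0>j1}. Since $\vect{v}(j_0, j_1) = \bm{e}_V^<(j_0, j_1)$ holds throughout the regime $pj_0 < j_1$, substituting this into the displayed expression for $\lambda(M_{\vect{j}})$ above and applying Lemma~\ref{lem:reminder1} with $s_i = m_i p + l_i$ decomposes
\[
\lambda(M_{\vect{j}}) = d + (p^2 - p - C_V^{<, 0}) n_0 + (p^2 - p - C_V^{<, 1}) n_1 + H(m_0, l_0, m_1, l_1),
\]
where $H(m_0, l_0, m_1, l_1) := (p-1) m_0 + l_0 + (p-1) m_1 + l_1 - \bm{e}_V^<(m_0 p + l_0, m_1 p + l_1)$ is precisely the quantity inside the claimed maximum and depends only on the bounded reduced parameters.

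For the divergent case $C_V^{<, 1} < p^2 - p$, I would take any valid configuration with $pj_0 < j_1$ (for example $(j_0, j_1) = (1, p+1)$) and send $n_1 \to \infty$ while keeping the other parameters fixed. The constraint $pj_0 < j_1$ is preserved, and the strictly positive coefficient $(p^2 - p - C_V^{<, 1})$ forces $\lambda(M_{\vect{j}}) \to \infty$, giving $\sup = \infty$.

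For the convergent case $C_V^{<, 0}, C_V^{<, 1} \geq p^2 - p$, both $n_0$- and $n_1$-coefficients are non-positive, so $\lambda(M_{\vect{j}}) \leq d + H$ on every valid configuration, which yields the upper bound $\sup \leq d + \max H$ immediately. For the matching lower bound, for each tuple $(m_0, l_0, m_1, l_1)$ I would exhibit a valid configuration whose $\lambda$ equals $d + H$: the optimal choice is $n_0 = 0$ (since increasing $n_0$ only tightens the constraint on $n_1$ without improving $\lambda$), and then $n_1 = 0$ is admissible exactly when $p(m_0 p + l_0) < m_1 p + l_1$.

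The main technical obstacle is establishing equality rather than a mere inequality. The global maximum of $H$ must be attained at a tuple for which the minimum admissible $(n_0, n_1)$ equals $(0, 0)$; otherwise the penalty $(p^2 - p - C_V^{<, 1}) n_1^{\min}$ coming from the strictly negative coefficient would cause the actual supremum to fall short of $d + \max H$. Handling this requires a careful combinatorial analysis of the ceilings defining $\bm{e}_V^<$: one must verify that among the optimizers of $H$ at least one tuple satisfies $p(m_0 p + l_0) < m_1 p + l_1$, by comparing $H$ across perturbations that shrink $(m_0, l_0)$ or enlarge $(m_1, l_1)$ and using the hypothesis bounds on $C_V^{<, 0}$ and $C_V^{<, 1}$ to control the signs of the resulting differences.
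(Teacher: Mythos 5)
Your argument is exactly the paper's: the authors apply \cref{lem:reminder1} to write \(\lambda(M_{\vect{j}}) = d + (p^{2} - p - C_{V}^{<,0})n_{0} + (p^{2} - p - C_{V}^{<,1})n_{1} + H(m_{0},l_{0},m_{1},l_{1})\) with \(H\) the bracketed quantity in \cref{eq:sup:pj0<j1}, observe the signs of the two coefficients, and stop --- ``we get the first assertion''; the divergent case is likewise dismissed as obvious. So your first three paragraphs reproduce the paper's proof in full, and the paper contains nothing corresponding to your final paragraph.

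The attainability issue you isolate is genuine, and the combinatorial analysis you propose cannot succeed in general, because the stated equality actually fails. Take \(p = 2\) and \(V = V_{4}\), so \(C_{V}^{<,0} = 2 = p^{2} - p\) and \(C_{V}^{<,1} = 4\). The four reduced tuples give \(H = -1, -2, -1, -2\), and both maximizers \((m_{0},l_{0},m_{1},l_{1}) = (0,1,0,1)\) and \((1,1,0,1)\) violate \(p(m_{0}p + l_{0}) < m_{1}p + l_{1}\), so each forces \(n_{1} \geq 1\) and incurs the strictly negative penalty \(p^{2} - p - C_{V}^{<,1} = -2\); one checks directly that \(\sup_{pj_{0} < j_{1}} \lambda(M_{\vect{j}}) = 2\), attained at \((j_{0},j_{1}) = (1,3)\), whereas the right-hand side of \cref{eq:sup:pj0<j1} equals \(3\). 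The lemma is therefore correct only with ``\(\leq\)'' in place of ``\(=\)'' (equality does hold when \(C_{V}^{<,1} = p^{2} - p\), or when some maximizer of \(H\) is admissible with \(n_{0} = n_{1} = 0\)). This does not damage the paper's later results: \cref{thm:upper-bound-of-sup} uses the formula only as an upper bound, and in \cref{ex:discrep-in-char3} the \(j_{0} = -\infty\) stratum dominates every row. In short, you have not missed a step that the paper supplies; you have correctly located a point where the paper's own proof, and indeed its statement, is imprecise.
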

\begin{proof}
  By \cref{lem:reminder1}, we get
  \begin{multline}\notag
    \lambda(M_{\vect{j}}) = d
    + (p^{2} - p - C_{V}^{<, 0}) n_{0} + (p - 1) m_{0} + l_{0} \\
    + (p^{2} - p - C_{V}^{<, 1}) n_{1} + (p - 1) m_{1} + l_{1} - \bm{e}_{V}^{<}(m_{0} p +
    l_{0}, m_{1} p + l_{1}).
  \end{multline}
  Assuming that \(C_{V}^{<, 0} \geq p^{2} - p\) and \(C_{V}^{<, 1} \geq p^{2} - p\), we get
  the first assertion.  It is obvious that
  \(\sup_{pj_{0} < j_{1}} \lambda(M_{\vect{j}}) = \infty\) if \(C_{V}^{<, 1} < p^{2} - p\).
\end{proof}

\begin{theorem}\label{thm:sup-of-dimension}
  If \(B_{V} \geq p - 1\) and \(C_{V}^{>} \geq p^{3} - p\), then we have
  \(\sup_{\vect{j}} \lambda(M_{\vect{j}}) < \infty\) and
  \begin{equation}
    \sup_{\vect{j}} \lambda(M_{\vect{j}}) = \max \left\{
      \sup_{j_{0} = - \infty} \lambda(M_{\vect{j}}),
      \sup_{pj_{0} > j_{1}} \lambda(M_{\vect{j}}),
      \sup_{pj_{0} < j_{1}} \lambda(M_{\vect{j}}) \right\},
  \end{equation}
  where the suprema on the right hand side are given by formulae
  \cref{eq:sup:j0=-infty,eq:sup:pj0>j1,eq:sup:pj0<j1}.  Conversely, if \(B_{V} < p - 1\) or
  \(C_{V}^{>} < p^{3} - p\), then \(\sup_{\vect{j}} \lambda(M_{\vect{j}}) = \infty\).
\end{theorem}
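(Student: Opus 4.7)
The plan is to combine the three case analyses already set up. The index set ${(\setN' \cup \{-\infty\})}^{2}$ decomposes (up to the trivial stratum $(-\infty, -\infty)$, which contributes a bounded value) into three regions: $j_{0} = -\infty$; $j_{0} \neq -\infty$ with $pj_{0} > j_{1}$; and $j_{0} \neq -\infty$ with $pj_{0} < j_{1}$. Since $\sup_{\vect{j}} \lambda(M_{\vect{j}})$ is the maximum of the three region suprema, finiteness of the overall supremum is equivalent to finiteness in each region; and in those regions, \cref{eq:sup:j0=-infty}, \cref{lem:sup:pj0>j1}, and \cref{lem:sup:pj0<j1} supply the explicit values whenever they apply.

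For the forward direction, assume $B_{V} \geq p - 1$ and $C_{V}^{>} \geq p^{3} - p$. Then \cref{eq:sup:j0=-infty} (via the cited result from \cite{Yasuda2019:Discrepancies} applied to the induced $H$-representation $W$ with $D_{W} = B_{V}$) handles the $j_{0} = -\infty$ region, giving the stated finite value exactly under $B_{V} \geq p - 1$; and \cref{lem:sup:pj0>j1} handles the $pj_{0} > j_{1}$ region, giving the stated finite value exactly under $C_{V}^{>} \geq p^{3} - p$. The $pj_{0} < j_{1}$ region is subtler: the hypothesis $B_{V} \geq p - 1$ immediately gives $C_{V}^{<, 1} = p B_{V} \geq p^{2} - p$, but $C_{V}^{<, 0}$ may still fall below $p^{2} - p$ (e.g. $V = V_{5}$ in characteristic $2$, where $C_{V}^{<, 0} = 0$), and then \cref{lem:sup:pj0<j1} does not directly apply. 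In this gap subcase I would exploit the stratum constraint itself by re-parameterizing $n_{1} = p n_{0} + n_{1}'$ with $n_{1}' \geq 0$, which is forced by $p j_{0} < j_{1}$; using the identity $C_{V}^{>} = C_{V}^{<, 0} + p C_{V}^{<, 1}$, the linear-in-$(n_{0}, n_{1})$ part of $\lambda(M_{\vect{j}})$ collapses to $(p^{3} - p - C_{V}^{>})\, n_{0} + (p^{2} - p - C_{V}^{<, 1})\, n_{1}'$, whose coefficients are both nonpositive under the hypotheses. Hence $\lambda(M_{\vect{j}})$ is bounded above, and its supremum, attained on the boundary $n_{1} = p n_{0}$, still coincides with the formula of \cref{eq:sup:pj0<j1}.

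For the converse, if $B_{V} < p - 1$ then the $j_{0} = -\infty$ supremum is already $\infty$ by the same cited result. If $B_{V} \geq p - 1$ but $C_{V}^{>} < p^{3} - p$, then specializing to $j_{1} = p j_{0} - 1$ in the formula established inside the proof of \cref{lem:sup:pj0>j1} exhibits $\lambda(M_{\vect{j}}) = d + (p^{3} - p - C_{V}^{>})\, n_{0} + \text{(bounded)}$, which is unbounded above as $n_{0} \to \infty$. The main obstacle in this proof is the gap subcase in the $p j_{0} < j_{1}$ region, since \cref{lem:sup:pj0<j1} as stated leaves it unaddressed; the identity $C_{V}^{>} = C_{V}^{<, 0} + p C_{V}^{<, 1}$ together with the stratum constraint $n_{1} \geq p n_{0}$ is precisely the ingredient needed to absorb the potentially dangerous $n_{0}$-growth into the global hypothesis $C_{V}^{>} \geq p^{3} - p$.
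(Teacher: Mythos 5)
Your proof is correct, but it resolves the key difficulty by a genuinely different route than the paper. You correctly identify that the only non-routine point is the region \(pj_{0} < j_{1}\) when \(C_{V}^{<,0}\) might drop below \(p^{2}-p\), so that \cref{lem:sup:pj0<j1} does not apply as stated. The paper's proof closes this gap by \emph{excluding} it: it invokes \cref{prop:lower-bounds-of-CV<0}\,(2), which says \(C_{V}^{<,0} < p^{2}-p\) forces \(V = V_{1}^{\oplus x}\oplus V_{3}\) with \(p=2\), and then checks \(C_{V}^{>} = 5 < 6 = p^{3}-p\) for such \(V\), contradicting the hypothesis; hence under the hypotheses all three lemmas apply verbatim. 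You instead handle the region directly via the substitution \(n_{1} = pn_{0} + n_{1}'\) (legitimately forced by \(pj_{0} < j_{1}\)) and the identity \(C_{V}^{>} = C_{V}^{<,0} + pC_{V}^{<,1}\), which collapses the linear part to \((p^{3}-p-C_{V}^{>})n_{0} + (p^{2}-p-C_{V}^{<,1})n_{1}'\) with both coefficients nonpositive under the stated hypotheses. Your argument is self-contained (it avoids the case-checking classification lemma entirely) and has the conceptual advantage of showing \emph{why} the two hypotheses are exactly the right ones: the dangerous direction in the \(pj_{0}<j_{1}\) cone is the diagonal \(n_{1}=pn_{0}\), and its growth rate is governed by \(C_{V}^{>}\). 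The only blemish is your illustrative example: \(V_{5}\) does not exist in characteristic \(2\) (indecomposables of \(\setZ/4\setZ\) have dimension at most \(4\)); the actual witnesses of the gap are \(V_{1}^{\oplus x}\oplus V_{3}\) with \(p=2\), where \(C_{V}^{<,0}=1<2\). This slip is not load-bearing, and the converse direction matches the paper.
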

\begin{proof}
  Since \(C_{V}^{<, 1} = p B_{V}\), thus \(B_{V} \geq p - 1\) implies that
  \(C_{V}^{<, 1} \geq p^{2} - p\).  Therefore, it is enough to show that
  \(C_{V}^{<, 0} \geq p^{2} - p\).  Assume that \(C_{V}^{<, 0} < p^{2} - p\).  Then, the lemma
  below shows that the \(G\)-representation \(V\) is of the form
  \(V_{1}^{\oplus x} \oplus V_{3}\) (\(p = 2\)).  However, if this is the case, we have
  \(C_{V}^{>} = 5 < 6 = p^{3} - p\), which contradicts to the assumption
  \(C_{V}^{>} \geq p^{3} - p\).

  Consequently, if \(B_{V} \geq p - 1\) and \(C_{V}^{>} \geq p^{3} - p\), then the suprema
  \(\sup_{j_{0} = - \infty} \lambda(M_{\vect{j}})\),
  \(\sup_{pj_{0} > j_{1}} \lambda(M_{\vect{j}})\) and
  \(\sup_{pj_{0} < j_{1}} \lambda(M_{\vect{j}})\) are all finite and they are given by
  \cref{eq:sup:j0=-infty,eq:sup:pj0>j1,eq:sup:pj0<j1}.  The converse is obvious.
\end{proof}

\begin{lemma}\label{prop:lower-bounds-of-CV<0}
  Let \(V\) be a faithful \(G\)-representation.  We have the following.
  \begin{enumerate}
  \item The inequality \(C_{V}^{<, 0} \geq p^{2}\) holds if \(p \geq 5\).
  \item The inequality \(C_{V}^{<, 0} \geq p^{2} - p\) holds except if the
    \(G\)-representation \(V\) is of the form
    \(V_{1}^{\oplus x} \oplus V_{3}\) (\(p = 2\)).
  \end{enumerate}
\end{lemma}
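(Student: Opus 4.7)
The approach is to exploit additivity $C_V^{<, 0} = \sum_\alpha C_{V_{d_\alpha}}^{<, 0}$ arising from the decomposition $V = \bigoplus_\alpha V_{d_\alpha}$ into indecomposables. By \cref{lem:Jordan-form-of-p-power}, an indecomposable $V_e$ realizes an element of order $p^2$ only when $e \geq p + 1$, so faithfulness of $V$ is equivalent to the existence of at least one summand with $d_\alpha \geq p + 1$. The plan is thus (a) to prove $C_{V_d}^{<, 0} \geq 0$ for all $1 \leq d \leq p^2$ and (b) to give sharper lower bounds on $C_{V_d}^{<, 0}$ when $d \geq p + 1$.

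Both (a) and (b) rest on the algebraic identity
\begin{equation*}
  2 C_{V_d}^{<, 0} = q(p-1)(p^2 + p - d - r) + rp(r-1),
\end{equation*}
which I would derive by inserting the closed forms of $A_d$ and $B_d$ (with $d = qp + r$) into $pA_d - (p-1)B_d$ and simplifying via $pq = d - r$. Since $d \leq p^2$ and $r \leq p - 1$ give $p^2 + p - d - r \geq p - r \geq 1$, and since $r(r-1) \geq 0$ on $\{0, \dotsc, p-1\}$, the identity immediately yields (a). Consequently, every summand contributes non-negatively, so I only need to bound the contribution of a summand with $d_\alpha \geq p + 1$ from below.

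For (2), I would carry out a short case analysis on $(q, r)$ with $d = qp + r \geq p + 1$. The borderline case $q = 1$, $r = 1$ (i.e., $d = p + 1$) gives $2 C_{V_{p+1}}^{<, 0} = (p-1)(p^2 - 2)$, which satisfies $\geq 2p(p-1)$ exactly when $p \geq 3$; this is the origin of the exception $(p, d) = (2, 3)$, where $C_{V_3}^{<, 0} = 1 < 2 = p^2 - p$. Checking the remaining $(q, r)$ pairs with the identity yields $C_{V_d}^{<, 0} \geq p^2 - p$ for every $d \geq p + 1$ whenever $(p, d) \neq (2, 3)$, which proves (2) for $p \geq 3$. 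For $p = 2$ with $V \neq V_1^{\oplus x} \oplus V_3$, either $V$ contains a $V_4$ summand (contributing $C_{V_4}^{<, 0} = 2$) or $V$ contains $V_3$ together with at least one further non-$V_1$ summand (a $V_2$ contributes $2$, another $V_3$ contributes $1$); combined with (a), this gives $C_V^{<, 0} \geq 2 = p^2 - p$ in every remaining case.

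For (1), the analogous case analysis with $p \geq 5$ produces the sharper bound $C_{V_d}^{<, 0} \geq p^2$ for all $d \geq p + 1$: the minimum over such $d$ is attained at $d = p + 1$ and equals $(p-1)(p^2-2)/2$, which exceeds $p^2$ for $p \geq 5$ since $(p-1)(p^2-2) - 2p^2 = p^3 - 3p^2 - 2p + 2 > 0$ for $p \geq 4$. Combined with (a), this yields (1). The main obstacle is the bookkeeping across the several $(q, r)$ regimes, especially handling the exceptional case $(p, d) = (2, 3)$, where the bound fails by exactly one and forces the odd-looking exception in (2); the displayed identity reduces every such verification to an elementary polynomial inequality in $p$ and $r$, so no deeper ingredient is needed beyond additivity and the faithfulness criterion.
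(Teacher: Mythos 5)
Your reduction is sound and your key identity
\begin{equation*}
2C_{V_d}^{<,0} \;=\; q(p-1)\bigl(p^2+p-d-r\bigr) + pr(r-1), \qquad d = qp+r,\ 0\le r<p,
\end{equation*}
is correct; it immediately gives \(C_{V_d}^{<,0}\ge 0\) for all \(d\le p^2\), and together with additivity and the observation (via \cref{lem:Jordan-form-of-p-power}) that faithfulness forces a summand of dimension \(\ge p+1\), it reduces both parts to bounding \(C_{V_d}^{<,0}\) for a single indecomposable with \(p+1\le d\le p^2\). This is the same reduction the paper makes implicitly (``without loss of generality''), but your technique for the remaining bound is genuinely different: the paper sandwiches \(A_d\) and \(B_d\) using \cref{lem:bounds-for-A&B}, obtains a concave real quadratic lower bound \(\Phi(d)\), and only has to check the two endpoints \(d=p+1\) and \(d=p^2\), whereas you work with the exact discrete expression. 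Your values at the critical points --- \(C_{V_{p+1}}^{<,0}=(p-1)(p^2-2)/2\), the exceptional \(C_{V_3}^{<,0}=1\) at \(p=2\), and the \(p=2\) endgame distinguishing \(V_1^{\oplus x}\oplus V_3\) from everything else --- are all correct and match the paper's tables.

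The one genuine gap is the assertion that the minimum of \(C_{V_d}^{<,0}\) over \(p+1\le d\le p^2\) is attained at \(d=p+1\). This is true but not obvious: the sequence is not monotone in \(d\) (for \(p=3\) it reads \(7,8,12,8,7,9\) for \(d=4,\dots,9\), so the minimum is also attained at \(d=p^2-1\)), and for part (1) the claim must hold for every prime \(p\ge 5\) simultaneously, so ``checking the remaining \((q,r)\) pairs'' is not literally a finite verification. To close it within your framework: for fixed \(q\) the identity is a convex quadratic in \(r\) with vertex at \(q-q/p+1/2\), so its integer minimum on \(0\le r\le p-1\) is at \(r=q\), where it factors as \(q(p-q)(p^2-2)\); since \(q(p-q)\ge p-1\) for \(1\le q\le p-1\), and the remaining case \(d=p^2\) gives the larger value \(p^2(p-1)\), the minimum over the whole range is indeed \((p-1)(p^2-2)/2\). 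With that step supplied, your inequalities \((p-1)(p^2-2)\ge 2p^2\) for \(p\ge 5\) and \((p-1)(p^2-2)\ge 2p(p-1)\) for \(p\ge 3\) finish parts (1) and (2) exactly as you intend; alternatively, the paper's concavity-plus-endpoints argument sidesteps the discrete optimization entirely.
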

\begin{proof}
  (1).  Without loss of generality, we may assume \(V\) is indecomposable of dimension \(d\)
  (\(p + 1 \leq d \leq p^{2}\)).  By \cref{lem:bounds-for-A&B}, we get
  \begin{align}
    \label{eq:1}
    C_{V}^{<, 0} - p^{2}
    &=
      p A_{d} - (p - 1) B_{d} - p^{2} \\
    &\geq
      p \left(\frac{(p - 1) d}{2} - \frac{p^{2}}{8}\right)
      - (p - 1) \left(\frac{d (d - p)}{2 p} + \frac{p}{8}\right) - p^{2}.
  \end{align}
  We denote by \(\Phi(d)\) the last expression above.  Let us show that \(\Phi(d) \geq 0\).
  By direct computation, we have
  \begin{equation}
    \Phi(d) =
    - \frac{p - 1}{2p} d^{2} + \frac{p^{2} - 1}{2} d - \frac{p^{3} + 9 p^{2} - p}{8},
  \end{equation}
  hence \(\Phi(d)\) is upward-convex with \(d\) regarded as a real variable.  It is enough to
  check the values \(\Phi(p + 1)\) and \(\Phi(p^{2})\) are both non-negative.  We get
  \begin{align}
    \Phi(p + 1)
    &=
      \frac{p^{2} (3 p (p - 3) - 7) + 4}{8 p} \\
    \Phi(p^{2})
    &=
      \frac{p (3 p (p - 5) + 2 p + 1)}{8}.
  \end{align}
  It is obvious that \(\Phi(p + 1) \geq 0\) and \(\Phi(p^{2}) \geq 0\) if \(p \geq 5\).
  Consequently, we have \(C_{V}^{<, 0} - p^{2} \geq \Phi(d) \geq 0\).

  (2).  We may assume \(p \leq 3\).  Firstly, we consider the case that \(V\) is
  indecomposable of dimension \(d \geq p + 1\).  When \(p = 3\), \(d\) varies from \(4\) to
  \(9\).  Checking each case by direct computation, we get
  \begin{gather}
    C_{V_{4}}^{<, 0} = 7, \quad C_{V_{5}}^{<, 0} = 8, \quad C_{V_{6}}^{<, 0} = 12, \\
    C_{V_{7}}^{<, 0} = 8, \quad C_{V_{8}}^{<, 0} = 7, \quad C_{V_{9}}^{<, 0} = 9,
  \end{gather}
  and hence \(C_{V}^{<, 0} \geq p^{2} - p\) holds.

  If \(p = 2\), we have
  \begin{equation}
    C_{V_{1}}^{<, 0} = 0, \quad  C_{V_{2}}^{<, 0} =  2, \quad
    C_{V_{3}}^{<, 0} = 1, \quad C_{V_{4}}^{<, 0} = 2,
  \end{equation}
  and hence the proof is completed.
\end{proof}

\begin{example}\label{ex:discrep-in-char3}
  Let \(p = 3\) and \(V\) the indecomposable \(G = \setZ/p^{2}\setZ\)-representation of
  dimension \(d\) (\(p + 1 < d \leq p^{2}\)).  Then, according to computations with
  Sage~\cite{SageMath}, we get the following:

  \begin{table}[htb]
    \centering
    \begin{tabular}{ccccc}
      \toprule \(d = \dim V\)
      & \(\sup_{j_{0} = - \infty} \lambda(M_{\vect{j}})\)
      & \(\sup_{pj_{0} > j_{1}} \lambda(M_{j})\)
      & \(\sup_{pj_{0} < j_{1}} \lambda(M_{j})\)
      & \(d(X)\) \\
      \cmidrule(r){1-1} \cmidrule(l){2-5}
      \(4\) & \(\infty\) & \(\infty\) & \(\infty\) &        \\
      \(5\) & \(5\)      & \(3\)      & \(4\)      & \(-1\) \\
      \(6\) & \(5\)      & \(2\)      & \(3\)      & \(0\)  \\
      \(7\) & \(4\)      & \(1\)      & \(4\)      & \(2\)  \\
      \(8\) & \(4\)      & \(0\)      & \(4\)      & \(3\)  \\
      \(9\) & \(4\)      & \(-2\)     & \(3\)      & \(4\)  \\
      \bottomrule
    \end{tabular}
    \caption{discrepancies in characteristic \(3\)}
  \end{table}
\end{example}

\subsection{Upper and lower bounds}
We shall give lower bounds of \(\bm{e}_{V}^{>}\) and \(\bm{e}_{V}^{<}\) and apply them to
determine when the quotient variety \(X\) is terminal, canonical or log canonical under the
condition that the given \(G\)-representation is indecomposable.

\begin{lemma}\label{prop:lower-bounds-of-e}
  We have
  \begin{equation}
   \bm{e}_{V}^{>}(j_{0})\geq \frac{j_{0} C_{V}^{>}}{p^{2}}, \quad
   \bm{e}_{V}^{<}(j_{0}, j_{1}) \geq \frac{j_{0} C_{V}^{<, 0} + j_{1} C_{V}^{<, 1}}{p^{2}}.
  \end{equation}
\end{lemma}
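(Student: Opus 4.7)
The plan is to use the trivial inequality $\lceil x \rceil \geq x$ and then reorganize the sums so that the coefficients $A_V$ and $B_V$ appear explicitly, after which the definitions of $C_V^{>}$, $C_V^{<,0}$, $C_V^{<,1}$ produce the right-hand side.

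First I would reduce to the indecomposable case. Since both $\bm{e}_V^{>}$ and $\bm{e}_V^{<}$ are defined additively over indecomposable summands (as are $A_V$ and $B_V$, and therefore $C_V^{>}$, $C_V^{<,0}$, $C_V^{<,1}$), it suffices to prove both inequalities when $V = V_d$ for a single $1 \leq d \leq p^2$.

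Next, for the first bound, I apply $\lceil x \rceil \geq x$ termwise in the defining sum of $\bm{e}_d^{>}(j_0)$:
\begin{equation}
\bm{e}_d^{>}(j_0) \geq \sum_{\substack{0 \leq i_0, i_1 < p, \\ 0 \leq i_0 + i_1 p < d}} \frac{p\, i_0 j_0 + (p^2 - p + 1) i_1 j_0}{p^2} = \frac{j_0}{p^2}\bigl(p A_d + (p^2 - p + 1) B_d\bigr) = \frac{j_0 C_{V_d}^{>}}{p^2}.
\end{equation}
For the second bound, the same device applied to the defining sum of $\bm{e}_d^{<}(j_0, j_1)$ gives
\begin{equation}
\bm{e}_d^{<}(j_0, j_1) \geq \frac{1}{p^2}\bigl(p A_d\cdot j_0 + B_d \cdot(-(p - 1) j_0 + p j_1)\bigr) = \frac{j_0 (p A_d - (p-1)B_d) + j_1 \cdot p B_d}{p^2},
\end{equation}
which is exactly $(j_0 C_{V_d}^{<,0} + j_1 C_{V_d}^{<,1})/p^2$.

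There is essentially no obstacle here; the lemma is a direct consequence of $\lceil x \rceil \geq x$ together with the definitions in Section~\ref{sec:some-invariants}. The only thing to be slightly careful about is that in the formula for $\bm{e}_d^{<}$ the coefficient of $i_1$ may be negative (when $(p-1) j_0 > p j_1$, though this case is excluded by $pj_0 < j_1$ in the application), but since we are only applying a termwise inequality and then rearranging, the sign of the coefficients plays no role in the argument.
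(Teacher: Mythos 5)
Your proposal is correct and is essentially identical to the paper's own proof: both reduce to the indecomposable case by additivity, apply $\lceil x\rceil\geq x$ termwise to the defining sums of $\bm{e}_d^{>}$ and $\bm{e}_d^{<}$, and rearrange to recover $pA_d+(p^2-p+1)B_d=C_{V_d}^{>}$ and $pA_d-(p-1)B_d=C_{V_d}^{<,0}$, $pB_d=C_{V_d}^{<,1}$. Your closing remark about the sign of the coefficient of $i_1$ is also accurate: the termwise bound is valid regardless of sign, so nothing further is needed.
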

\begin{proof}
  By definition, we get
  \begin{align}
    \bm{e}_{d_{\alpha}}^{>}(j_{0})
    &=
      \sum_{\substack{0 \leq i_{0} + i_{1}p < d_{\alpha}, \\ 0 \leq i_{0}, i_{1} < p}}
      \ceil{\frac{p i_{0} j_{0} + (p^{2} - p + 1) i_{1} j_{0}}{p^{2}}} \\
    &\geq
      \sum_{\substack{0 \leq i_{0} + i_{1}p < d_{\alpha}, \\ 0 \leq i_{0}, i_{1} < p}}
      \frac{p i_{0} j_{0} + (p^{2} - p + 1) i_{1} j_{0}}{p^{2}} \\
    &=
      \frac{j_{0}}{p^{2}} \left(
      p
      \sum_{\substack{0 \leq i_{0} + i_{1}p < d_{\alpha}, \\ 0 \leq i_{0}, i_{1} < p}}
      i_{0}
      + (p^{2} - p + 1)
      \sum_{\substack{0 \leq i_{0} + i_{1}p < d_{\alpha}, \\ 0 \leq i_{0}, i_{1} < p}}
      i_{1}
      \right) \\
    &= \frac{j_{0} C_{d_{\alpha}}^{>}}{p^{2}}.
  \end{align}
  Taking sum over \(\alpha\), we get the first inequality.

  Similarly, we have
  \begin{align}
    \bm{e}_{d_{\alpha}}^{<}(j_{0}, j_{1})
    &=
      \sum_{\substack{0 \leq i_{0} + i_{1}p < d_{\alpha}, \\ 0 \leq i_{0}, i_{1} < p}}
      \ceil{\frac{p i_{0} j_{0} - (p - 1) i_{1} j_{0} + p i_{1} j_{1}}{p^{2}}} \\
    &\geq
      \sum_{\substack{0 \leq i_{0} + i_{1}p < d_{\alpha}, \\ 0 \leq i_{0}, i_{1} < p}}
      \frac{p i_{0} j_{0} - (p - 1) i_{1} j_{0} + p i_{1} j_{1}}{p^{2}} \\
    &=
      \frac{j_{0}}{p^{2}} \left(
      p
      \sum_{\substack{0 \leq i_{0} + i_{1}p < d_{\alpha}, \\ 0 \leq i_{0}, i_{1} < p}}
      i_{0}
      - (p - 1)
      \sum_{\substack{0 \leq i_{0} + i_{1}p < d_{\alpha}, \\ 0 \leq i_{0}, i_{1} < p}}
      i_{1}\right)
      + \frac{j_{1}}{p^{2}} p
      \sum_{\substack{0 \leq i_{0} + i_{1}p < d_{\alpha}, \\ 0 \leq i_{0}, i_{1} < p}}
      i_{1} \\
    &=
      \frac{j_{0} C_{d_{\alpha}}^{<, 0} + j_{1} C_{d_{\alpha}}^{<, 1}}{p^{2}},
  \end{align}
  and hence we get the second inequality.
\end{proof}

We can now give the upper and bound of \(\sup \lambda(M_{\vect{j}})\).  For the \(j_{0} = -
\infty\) part, from~\cite[Theorem~1.2]{Yasuda2019:Discrepancies}, we have the following
bounds:

\begin{lemma}[{\cite{Yasuda2019:Discrepancies}}]
  We have
  \begin{equation}\label{ineq:lower-bound:j0=-infty}
    \sup_{j_{0} = - \infty} \lambda(M_{\vect{j}}) \geq d + p - 1 - B_{V}.
  \end{equation}
  Furthermore, if \(B_{V} \geq p\), then we have
  \begin{equation}\label{ineq:upper-bound:j0=-infty}
    \sup_{j_{0} = - \infty} \lambda(M_{\vect{j}}) \leq d + 1 - \frac{2 B_{V}}{p}.
  \end{equation}
\end{lemma}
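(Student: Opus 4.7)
The plan is to reduce both inequalities to estimates on the single quantity
\begin{equation*}
M \coloneq \max_{1 \leq l_{1} < p}\{\sht_{W}(p - l_{1}) + l_{1}\},
\end{equation*}
using the closed-form identity $\sup_{j_{0} = -\infty}\lambda(M_{\vect{j}}) = d - B_{V} + M$ recorded in the paragraph immediately preceding the lemma. The first inequality then amounts to $M \geq p - 1$, while the second amounts to $M \leq 1 + B_{V}(p - 2)/p$ under the hypothesis $B_{V} \geq p$.

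For \cref{ineq:lower-bound:j0=-infty} my plan is to exhibit an explicit witness in the maximum. Choosing $l_{1} = p - 1$ turns the maximand into $\sht_{W}(1) + (p - 1)$. By \cref{lem:Jordan-form-of-p-power} combined with the decomposition \cref{eq:induced-subrepresentation}, every indecomposable $H$-summand $W_{e}$ of the restricted representation $W$ has dimension $e \leq p$, and for such $e$ one reads off
\begin{equation*}
\sht_{W_{e}}(1) = \sum_{i = 1}^{e - 1}\floor{i/p} = 0.
\end{equation*}
Summing over the indecomposable summands gives $\sht_{W}(1) = 0$ and hence $M \geq p - 1$, which yields the first bound.

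For \cref{ineq:upper-bound:j0=-infty} the plan is to invoke \cite[Theorem~1.2]{Yasuda2019:Discrepancies} applied to the $H$-action induced on $\schA_{k}^{d}$. That theorem, established in the cyclic prime-order setting, provides precisely the required upper bound on the supremum in question; the invariant denoted $D_{W}$ there coincides with our $B_{V}$ by the matching already performed in \cref{sec:1st-sum}, and the hypothesis $B_{V} \geq p$ translates to $D_{W} \geq p$ as needed. Reading off the bound from the cited theorem and substituting into the identity for $\sup_{j_{0} = -\infty}\lambda(M_{\vect{j}})$ gives \cref{ineq:upper-bound:j0=-infty}.

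The main substantive content is packaged inside the cited \cite[Theorem~1.2]{Yasuda2019:Discrepancies}; the genuine obstacle, carried out there, is bounding $\sht_{W}(p - l_{1}) + l_{1}$ uniformly in $l_{1}$ through a case analysis of the indecomposable summands of $W$, and this step is the reason the hypothesis $D_{W} \geq p$ is needed. Locally, all that must be checked is that the restriction procedure from a $\setZ/p^{2}\setZ$-representation to its unique order-$p$ subgroup $H$ preserves the relevant invariants, which is immediate from \cref{eq:induced-subrepresentation}.
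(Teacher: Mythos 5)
Your proposal is correct, and in substance it coincides with the paper's own treatment: the paper offers no proof of this lemma at all, importing both bounds from \cite[Theorem~1.2]{Yasuda2019:Discrepancies}, which is exactly what you do for the upper bound \cref{ineq:upper-bound:j0=-infty} (and your identification of the invariant there with \(B_{V}\) via the matching in \cref{sec:1st-sum} is the right justification for that transfer). Your derivation of the lower bound \cref{ineq:lower-bound:j0=-infty} is actually more self-contained than the paper's bare citation: reducing it to \(M \geq p - 1\) via the recorded identity \(\sup_{j_{0} = -\infty}\lambda(M_{\vect{j}}) = d - B_{V} + \max_{1 \leq l_{1} < p}\{\sht_{W}(p - l_{1}) + l_{1}\}\), taking the witness \(l_{1} = p - 1\), and noting that \(\sht_{W}(1) = 0\) because every indecomposable \(H\)-summand of \(W\) has dimension at most \(p\), is all correct. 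One small point you should make explicit: that closed-form identity is only stated in the paper under the hypothesis \(p - 1 - B_{V} \leq 0\); in the complementary case \(B_{V} < p - 1\) the paper records \(\sup_{j_{0} = -\infty}\lambda(M_{\vect{j}}) = \infty\), so \cref{ineq:lower-bound:j0=-infty} holds trivially there. Adding that one-line case distinction makes the lower bound unconditional, as the lemma asserts.
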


\begin{proposition}\label{thm:upper-bound-of-sup}
  Assume that \(B_{V} \geq p\), \(C_{V}^{>} \geq p^{3}\) and \(C_{V}^{<, 0} \geq p^{2}\).
  Then, we have
  \begin{equation}
    \sup \lambda(M_{\vect{j}}) \leq \max
    \left\{
      d + 1 - \frac{2 B_{V}}{p},
      d + p - \frac{C_{V}^{>}}{p^{2}},
      d + 2 - \frac{C_{V}^{<, 0} + C_{V}^{<, 1}}{p^{2}}
    \right\}.
  \end{equation}
\end{proposition}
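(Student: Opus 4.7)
The plan is to apply \cref{thm:sup-of-dimension} to split $\sup_{\vect{j}}\lambda(M_{\vect{j}})$ into the three partial suprema over the strata $j_0 = -\infty$, $pj_0 > j_1$, and $pj_0 < j_1$, and then bound each against the corresponding expression inside the stated maximum. The hypotheses $B_V \geq p$ and $C_V^{>}\geq p^3$ imply $B_V \geq p-1$ and $C_V^{>}\geq p^3-p$, so the decomposition from that theorem is available. The bound $d+1-2B_V/p$ for the $j_0=-\infty$ stratum is then nothing other than \cref{ineq:upper-bound:j0=-infty}, whose hypothesis $B_V\geq p$ is assumed.

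For the $pj_0 > j_1$ stratum, I plan to invoke \cref{lem:sup:pj0>j1} and set $j=m_0 p + l_0$. The elementary identity $(p^2-1)m_0 + l_0 p = p j - m_0$ combined with the first inequality of \cref{prop:lower-bounds-of-e} produces the linear majorant $j(p - C_V^{>}/p^2) - m_0$. The key observation is that the coefficient $p - C_V^{>}/p^2$ is non-positive by the hypothesis $C_V^{>}\geq p^3$, so this majorant is maximized over $j \geq 1$ and $m_0 \geq 0$ at $j = 1$, $m_0 = 0$, giving the bound $p - C_V^{>}/p^2$, exactly as required.

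For the $pj_0 < j_1$ stratum, I plan to proceed analogously with \cref{lem:sup:pj0<j1}. Its hypothesis $C_V^{<,0} \geq p^2-p$ follows from the assumption $C_V^{<,0} \geq p^2$, while $C_V^{<,1} = p B_V \geq p^2 \geq p^2-p$ follows from $B_V \geq p$. Rewriting $(p-1)m_i + l_i = (m_i p + l_i) - m_i$ and applying the second inequality of \cref{prop:lower-bounds-of-e} yields the linear majorant
\[
j_0\!\left(1 - \frac{C_V^{<,0}}{p^2}\right) + j_1\!\left(1 - \frac{C_V^{<,1}}{p^2}\right) - m_0 - m_1.
\]
Both coefficients are non-positive under our hypotheses, so the maximum is attained at $j_0 = j_1 = 1$ and $m_0 = m_1 = 0$, producing the bound $2 - (C_V^{<,0} + C_V^{<,1})/p^2$.

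The argument is essentially bookkeeping once the three-way decomposition is in place. The main thing to watch is the interplay between the hypotheses $B_V \geq p$, $C_V^{>}\geq p^3$, $C_V^{<,0}\geq p^2$ and the various ``$\geq p^2-p$'' or ``$\geq p^3-p$'' thresholds that the preceding lemmas need; these are all easily verified. The conceptual step is the reduction of each inner supremum to a linear function of $j_0$ and $j_1$ via \cref{prop:lower-bounds-of-e}, after which the non-positivity of the resulting coefficients forces the supremum to be realized at the smallest admissible values, which is where the stated bounds emerge.
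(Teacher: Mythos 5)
Your proposal is correct and follows essentially the same route as the paper: the three-way split over $j_0=-\infty$, $pj_0>j_1$, $pj_0<j_1$, the citation of \cref{ineq:upper-bound:j0=-infty} for the first part, and the linearization of \cref{lem:sup:pj0>j1} and \cref{lem:sup:pj0<j1} via \cref{prop:lower-bounds-of-e} followed by the non-positivity of the resulting coefficients. Your regrouping of the majorant in terms of $j=m_0p+l_0$ rather than separately in $m_0$ and $l_0$ is only a cosmetic variation and leads to the same extremal point $m_0=m_1=0$, $l_0=l_1=1$.
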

\begin{proof}
  We shall first consider \(\sup_{pj_{0} > j_{1}} \lambda(M_{\vect{j}})\).  By
  \cref{prop:lower-bounds-of-e}, we have
  \begin{equation}
    (p^{2} - 1) m_{0} + l_{0} p - \bm{e}_{V}^{>}(m_{0} p + l_{0}) \leq
    \left(p^{2} - 1 - \frac{C_{V}^{>}}{p}\right) m_{0}
    + \left(p - \frac{C_{V}^{>}}{p^{2}}\right) l_{0}.
  \end{equation}
  Since we assume \(C_{V}^{>} \geq p^{3}\), thus the coefficients of \(m_{0}\) and \(l_{0}\)
  in the right hand side are non-positive.  Therefore, the right hand side attains the maximum
  at \(m_{0} = 0\) and \(l_{0} = 1\).  From \cref{lem:sup:pj0>j1}, we get
  \begin{equation}\label{ineq:upper-bound:pj0>j1}
    \sup_{pj_{0} > j_{1}} \lambda(M_{\vect{j}}) \leq d + p - \frac{C_{V}^{>}}{p^{2}}.
  \end{equation}

  Next, we shall consider \(\sup_{pj_{0} < j_{1}} \lambda(M_{\vect{j}})\).  By
  \cref{prop:lower-bounds-of-e}, we have
  \begin{multline}\notag
    (p - 1) m_{0} + l_{0} + (p - 1) m_{1} + l_{1} - \bm{e}_{V}^{<}(m_{0}p + l_{0}, m_{1}p + l_{1}) \\
    \leq
    \left(p - 1 - \frac{C_{V}^{<, 0}}{p}\right) m_{0}
    + \left(1 - \frac{C_{V}^{<, 0}}{p^{2}}\right) l_{0}
    + \left(p - 1 - \frac{C_{V}^{<, 1}}{p}\right) m_{1}
    + \left(1 - \frac{C_{V}^{<, 1}}{p^{2}}\right) l_{1}.
  \end{multline}
  Note that the assumption \(B_{V} \geq p\) implies \(C_{V}^{<, 1} \geq p^{2}\).  Since we
  have \(C_{V}^{<, 0} \geq p^{2}\) and \(C_{V}^{<, 1} \geq p^{2}\), thus the coefficients in
  the last expression are non-positive.  Therefore, the last expression takes the maximum at
  \(m_{0} = m_{1} = 0\) and \(l_{0} = l_{1} = 1\).  From \cref{lem:sup:pj0<j1}, we get
  \begin{equation}\label{ineq:upper-bound:pj0<j1}
    \sup_{pj_{0} < j_{1}} \lambda(M_{\vect{j}}) \leq
    d + 2 - \frac{C_{V}^{<, 0} + C_{V}^{<, 1}}{p^{2}}.
  \end{equation}

  Combining \cref{ineq:upper-bound:j0=-infty,ineq:upper-bound:pj0>j1,ineq:upper-bound:pj0<j1},
  we get the claim.
\end{proof}

As a conclusion of this section, we get the following.

\begin{theorem}\label{thm:estimate-discrep}
  Assume that \(V = V_{d}\) is an indecomposable \(G\)-representation of dimension \(d\)
  (\(p + 1 < d \leq p^{2}\)) (with this assumption, \(V\) has no pseudo-reflection and
  \(V \to X \coloneq V/G\) is crepant).  Then,
  \begin{equation}
    \text{\(X\) is }
    \begin{cases}
      \text{terminal}, \\
      \text{canonical}, \\
      \text{log canonical}, \\
      \text{not log canonical}
    \end{cases}
    \text{if and only if }
    \begin{cases}
      d \geq 2p + 1, \\
      d \geq 2p, \\
      d \geq 2p - 1, \\
      d < 2p - 1.
    \end{cases}
  \end{equation}
\end{theorem}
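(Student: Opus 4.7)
\emph{Proof plan.} The strategy is to apply \cref{prop:Yasuda2019Prop2.1} in order to translate the classification into a single computation of $\sup_{\vect{j}} \lambda(M_{\vect{j}})$, since that proposition gives $d(X) = d - 1 - \sup_{\vect{j}} \lambda(M_{\vect{j}})$. Consequently, $X$ is terminal, canonical, log canonical, or not log canonical exactly when this supremum is $\leq d - 2$, $\leq d - 1$, $\leq d$, or $> d$, respectively. Writing $V = V_{d}$ with $d = qp + r$ and $0 \leq r < p$, the invariants $A_V, B_V, C_V^{>}, C_V^{<, 0}, C_V^{<, 1}$ become explicit polynomials in $q, r$, and by \cref{lem:Jordan-form-of-p-power} the restriction of $V$ to the index-$p$ subgroup $H$ is $W_{q + 1}^{\oplus r} \oplus W_q^{\oplus p - r}$.

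The range $p + 1 < d < 2p - 1$ is disposed of at once. Here $q = 1$ and $r = d - p \leq p - 2$, giving $B_V = qr = d - p < p - 1$. By the last assertion of \cref{thm:sup-of-dimension} we then have $\sup_{\vect{j}} \lambda(M_{\vect{j}}) = \infty$, so $d(X) = -\infty$ and $X$ is not log canonical, as required.

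For the remaining range $d \geq 2p - 1$ I would first verify the hypotheses $B_V \geq p - 1$ and $C_V^{>} \geq p^3 - p$ of \cref{thm:sup-of-dimension} by direct substitution into the explicit formulas for $B_V$ and $C_V^{>}$. Then $\sup_{\vect{j}} \lambda(M_{\vect{j}})$ equals the maximum of the three finite quantities given by \cref{eq:sup:j0=-infty,eq:sup:pj0>j1,eq:sup:pj0<j1}. For the $j_0 = -\infty$ part I would combine \cref{ineq:lower-bound:j0=-infty,ineq:upper-bound:j0=-infty} with the explicit description of $W$ above. For the $pj_0 > j_1$ and $pj_0 < j_1$ parts, the maxima range over the finite set $0 \leq m_i < p$, $1 \leq l_i < p$, and the linear lower bounds of \cref{prop:lower-bounds-of-e} together with the global upper bound of \cref{thm:upper-bound-of-sup} cut the feasible optimizers down to a small list of boundary tuples.

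The main obstacle is pinning down the exact value of $\sup_{\vect{j}} \lambda(M_{\vect{j}})$ at the two boundary dimensions $d = 2p - 1$ and $d = 2p$, where we must show it equals $d$ and $d - 1$ respectively. I plan to construct explicit witnesses $\vect{j}$ achieving these values by choosing $j_0 \in \{1, p - 1\}$ and $j_1$ of the form $pj_0 - 1$ or $pj_0 + 1$, so that the ceiling functions inside $\bm{e}_V^{>}(j_0)$ and $\bm{e}_V^{<}(j_0, j_1)$ collapse to integer linear expressions in $q, r$, then compare these witnesses with the matching upper bounds $d + p - C_V^{>}/p^2$ and $d + 2 - (C_V^{<, 0} + C_V^{<, 1})/p^2$ coming from \cref{thm:upper-bound-of-sup}. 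Once the boundary values are fixed, the stronger inequality $\sup \lambda \leq d - 2$ for $d \geq 2p + 1$ follows from monotonicity: $B_V$, $C_V^{>}$, and $C_V^{<, 0} + C_V^{<, 1}$ all increase strictly with $d$ in the relevant range, so each of the three upper bounds in \cref{thm:upper-bound-of-sup} drops by at least one unit, proving the terminal case.
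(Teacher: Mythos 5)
Your overall skeleton (reduce to $\sup_{\vect{j}}\lambda(M_{\vect{j}})$ via \cref{prop:Yasuda2019Prop2.1}, kill the range $p+1<d<2p-1$ with $B_V=d-p<p-1$ and \cref{thm:sup-of-dimension}, and use monotonicity of the upper bounds for $d\ge 2p+1$) agrees with the paper, and those parts are fine. The gap is in the two boundary cases $d=2p-1$ and $d=2p$, where you propose to pin down $\sup_{\vect{j}}\lambda(M_{\vect{j}})$ exactly by matching explicit witnesses against the upper bounds of \cref{thm:upper-bound-of-sup} and \cref{ineq:upper-bound:j0=-infty}. Those bounds are simply not available there: both require $B_V\ge p$, whereas at $d=2p-1$ one has $B_V=p-1$; and \cref{thm:upper-bound-of-sup} further requires $C_V^{<,0}\ge p^2$, which fails e.g.\ for $p=2$, $d=4$ (where $C_{V_4}^{<,0}=2$) and for $p=3$, $d=6$ (where $C_{V_6}^{<,0}=p(p-1)^2=12$ but the resulting bound $d+2-(C_V^{<,0}+C_V^{<,1})/p^2=d-\tfrac13$ still exceeds the target $d-1$). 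So even where the hypotheses hold, the bounds are too coarse to certify $\sup\lambda\le d$ resp.\ $\le d-1$. The paper avoids computing the supremum exactly by exploiting integrality: since $\omega_X$ is invertible, $d(X)$ is either $-\infty$ or an integer $\ge -1$. At $d=2p-1$ it checks $B_V\ge p-1$ and $C_V^{>}>p^3-p$ so that \cref{thm:sup-of-dimension} gives $d(X)>-\infty$, hence $d(X)\ge -1$, while \cref{ineq:lower-bound:j0=-infty} gives $\sup\lambda\ge d+p-1-B_V=d$, hence $d(X)\le -1$; at $d=2p$ it gets $d(X)\ge 0$ from the klt criterion \cref{prop:cond-klt} (plus Gorensteinness) and $d(X)\le 0$ again from \cref{ineq:lower-bound:j0=-infty}. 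Without this dichotomy your plan has no workable upper bound on $\sup\lambda$ in exactly the cases that decide the lc and canonical thresholds.

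A second, smaller gap: in the terminal range $d\ge 2p+1$ your monotonicity argument needs the three bounds of \cref{thm:upper-bound-of-sup} to be $<d-1$ at $d=2p+1$, but the third bound equals $d-1+4-p-(p+2)/p^2$, which is $>d-1$ when $p=3$ (and the hypothesis $C_V^{<,0}\ge p^2$ also fails at $p=3$, $d=7$). The paper therefore restricts the general argument to $p\ge 5$ and disposes of $p=3$ by the explicit computation recorded in \cref{ex:discrep-in-char3}; your proposal would need a similar separate treatment of $p=3$ (cf.\ \cref{eq:key:estimate-discrep}).
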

\begin{proof}
  First, we consider the case \(d < 2p - 1\).  From the definition of \(B_{V}\), we get
  \(B_{V} < p - 1\).  By \cref{thm:sup-of-dimension}, we get
  \(\sup \lambda(M_{\vect{j}}) = \infty\) and hence \(d(X) = - \infty\).

  Next, we consider the case \(d = 2p - 1\).  Since we also assume \(d > p + 1\), thus we have
  \(p \geq 3\).  By direct computation, we have \(B_{V} = p - 1\) and
  \(C_{V}^{>} = 2p^{3} - 4p^{2} + 3p - 1 > p^{3} - p\) and hence \(d(X) > - \infty\) by
  \cref{thm:sup-of-dimension}.  We remark that by \cref{ineq:lower-bound:j0=-infty} we have
  \begin{equation}
    \sup \lambda(M_{\vect{j}})
    \geq \sup_{j_{0} = - \infty} \lambda(M_{\vect{j}})
    \geq d + p - 1 - B_{V}.
  \end{equation}
  and hence \(d(X) \leq -1\) by \cref{prop:Yasuda2019Prop2.1}.  Therefore, we get
  \(d(X) = -1\).

  Thirdly, we consider the case \(d = 2p\).  Then, we have
  \begin{align}
    B_{V} &= p, \\
    C_{V}^{>} &= 2 p^{3} - 2 p^{2} + p \geq p^{3} \geq p^{3} - p + 1.
  \end{align}
  and hence by \cref{prop:cond-klt}, the quotient \(X = V/G\) is klt and \(d(X) \geq 0\)
  (recall that since \(\omega_{X}\) is invertible, thus \(d(X) > -1\) implies
  \(d(X) \geq 0\)).  On the other hand, by \cref{ineq:lower-bound:j0=-infty}, we have
  \begin{equation}
    \sup \lambda(M_{\vect{j}})
    \geq \sup_{j_{0} = - \infty} \lambda(M_{\vect{j}})
    \geq d + p - 1 - B_{V}
    = 2 p - 1,
  \end{equation}
  and hence \(d(X) \leq 0\).  Thus we get \(d(X) = 0\).

  Finally, we consider the case \(d \geq 2p + 1\).  When \(p = 3\), the assertion follows from
  \cref{ex:discrep-in-char3}.  We assume that \(p \geq 5\).  We remark that \(A_{d}\) and
  \(B_{d}\) are monotonically increasing function in \(d\), so are \(C_{V}^{>}\) and
  \(C_{V}^{<, 0} + C_{V}^{<, 1}\).  From \cref{thm:upper-bound-of-sup}, it is enough to show
  \begin{equation}\label{eq:key:estimate-discrep}
    \max \left\{
      d + 1 - \frac{2 B_{V}}{p},
      d + p - \frac{C_{V}^{>}}{p^{2}},
      d + 2 - \frac{C_{V}^{<, 0} + C_{V}^{<, 1}}{p^{2}}
    \right\} < d - 1
  \end{equation}
  in the case \(d = 2p + 1\).  In this case, we have
  \begin{align}
    B_{V} &= p + 2, \\
    C_{V}^{>} &= 2p^{3} - p + 2, \\
    C_{V}^{<, 0} + C_{V}^{<, 1} &= p^{3} - p^{2} + p + 2,
  \end{align}
  and hence
  \begin{align}
    d + 1 - \frac{2 B_{V}}{p}
    &=
      d - 1 - \frac{4}{p}, \\
    d + p - \frac{C_{V}^{>}}{p^{2}}
    &=
      d - 1 + 1 + p - \frac{2 p^{3} - p + 2}{p^{2}} \\
    &=
      d - 1 + 1 - p - \frac{p - 2}{p^{2}}, \\
    d + 2 - \frac{C_{V}^{<, 0} + C_{V}^{<, 1}}{p^{2}}
    &=
      d - 1 + 3 - \frac{p^{3} - p^{2} + p + 2}{p^{2}} \\
    &=
      d - 1 + 4 - p - \frac{p + 2}{p^{2}}.
  \end{align}
  Thus \cref{eq:key:estimate-discrep} holds.
\end{proof}

\bibliography{refs}

\providecommand{\bysame}{\leavevmode\hbox to3em{\hrulefill}\thinspace}
\providecommand{\MR}{\relax\ifhmode\unskip\space\fi MR }
\providecommand{\MRhref}[2]{%
  \href{http://www.ams.org/mathscinet-getitem?mr=#1}{#2}
}
\providecommand{\href}[2]{#2}
\begin{thebibliography}{10}

\bibitem{Arvidsson2020:Kawamata-Viehweg}
Emelie Arvidsson, Fabio Bernasconi, and Justin Lacini, \emph{On the
  {K}awamata--{V}iehweg vanishing for log del {P}ezzo surfaces in positive
  characteristic}, \url{https://arxiv.org/abs/2006.03571}, 2020, preprint.

\bibitem{Batyrev1999:Non-Archimedean}
Victor~V. Batyrev, \emph{{Non-Archimedean integrals and stringy Euler numbers
  of log-terminal pairs}}, J. Eur. Math. Soc. (JEMS) \textbf{1} (1999), no.~1,
  5--33. \MR{1677693}

\bibitem{Bernasconi2021:Kawamata-Viehweg}
Fabio Bernasconi, \emph{Kawamata-{V}iehweg vanishing fails for log del {P}ezzo
  surfaces in characteristic 3}, J. Pure Appl. Algebra \textbf{225} (2021),
  no.~11, 106727, 16. \MR{4228436}

\bibitem{Brylinski1983:Theorie}
Jean-Luc Brylinski, \emph{Th\'{e}orie du corps de classes de {K}ato et
  rev\^{e}tements ab\'{e}liens de surfaces}, Ann. Inst. Fourier (Grenoble)
  \textbf{33} (1983), no.~3, 23--38. \MR{723946}

\bibitem{Campbell2011:Modular}
H.~E. A.~Eddy Campbell and David~L. Wehlau, \emph{Modular invariant theory},
  Encyclopaedia of Mathematical Sciences, vol. 139, Springer-Verlag, Berlin,
  2011, Invariant Theory and Algebraic Transformation Groups, 8. \MR{2759466}

\bibitem{Cascini2019:Purely}
Paolo Cascini and Hiromu Tanaka, \emph{Purely log terminal threefolds with
  non-normal centres in characteristic two}, Amer. J. Math. \textbf{141}
  (2019), no.~4, 941--979. \MR{3992570}

\bibitem{Curtis2006:Representation}
Charles~W. Curtis and Irving Reiner, \emph{Representation theory of finite
  groups and associative algebras}, AMS Chelsea Publishing, Providence, RI,
  2006, Reprint of the 1962 original. \MR{2215618}

\bibitem{Denef2002:Motivic}
Jan Denef and Fran{\c{c}}ois Loeser, \emph{Motivic integration, quotient
  singularities and the {M}c{K}ay correspondence}, Compos. Math. \textbf{131}
  (2002), no.~3, 267--290. \MR{1905024}

\bibitem{Harbater1980:Moduli}
David Harbater, \emph{Moduli of {$p$}-covers of curves}, Comm. Algebra
  \textbf{8} (1980), no.~12, 1095--1122. \MR{579791}

\bibitem{Kollar2013:Singularities}
J{\'a}nos Koll{\'a}r, \emph{Singularities of the minimal model program},
  Cambridge Tracts in Mathematics, vol. 200, Cambridge University Press,
  Cambridge, 2013, With a collaboration of S{\'a}ndor Kov{\'a}cs. \MR{3057950}

\bibitem{Kovacs2018:Non-Cohen}
S\'{a}ndor~J. Kov\'{a}cs, \emph{Non-{C}ohen-{M}acaulay canonical
  singularities}, Local and global methods in algebraic geometry, Contemp.
  Math., vol. 712, Amer. Math. Soc., [Providence], RI, [2018] \copyright 2018,
  pp.~251--259. \MR{3832406}

\bibitem{Lang2002:Algebra}
Serge Lang, \emph{Algebra}, third ed., Graduate Texts in Mathematics, vol. 211,
  Springer-Verlag, New York, 2002. \MR{1878556}

\bibitem{Neukirch2008:Cohomology}
J{\"u}rgen Neukirch, Alexander Schmidt, and Kay Wingberg, \emph{Cohomology of
  number fields}, second ed., Grundlehren der Mathematischen Wissenschaften
  [Fundamental Principles of Mathematical Sciences], vol. 323, Springer-Verlag,
  Berlin, 2008. \MR{2392026}

\bibitem{SageMath}
The {Sage Development Team}, \emph{Sage mathematics software (version 9.0)},
  \url{https://www.sagemath.org}, 2020.

\bibitem{Serre1979:Local}
Jean-Pierre Serre, \emph{Local fields}, Graduate Texts in Mathematics, vol.~67,
  Springer-Verlag, New York-Berlin, 1979, Translated from the French by Marvin
  Jay Greenberg. \MR{554237}

\bibitem{Tonini2017:Moduli}
Fabio Tonini and Takehiko Yasuda, \emph{Moduli of formal torsors},
  \url{https://arxiv.org/abs/1709.01705}, 2017, to appear in Journal of
  Algebraic Geometry.

\bibitem{Tonini2019:Moduli}
\bysame, \emph{Moduli of formal torsors {II}},
  \url{https://arxiv.org/abs/1909.09276}, 2019, preprint.

\bibitem{Totaro2019:Failure}
Burt Totaro, \emph{The failure of {K}odaira vanishing for {F}ano varieties, and
  terminal singularities that are not {C}ohen-{M}acaulay}, J. Algebraic Geom.
  \textbf{28} (2019), no.~4, 751--771. \MR{3994312}

\bibitem{Wood2015:Mass}
Melanie~Matchett Wood and Takehiko Yasuda, \emph{Mass formulas for local
  {G}alois representations and quotient singularities. {I}: a comparsion of
  counting function}, Int. Math. Res. Not. IMRN (2015), no.~23, 12590--12619.
  \MR{3431631}

\bibitem{Yasuda2014:p-Cyclic}
Takehiko Yasuda, \emph{The $p$-cyclic {M}c{K}ay correspondence via motivic
  integration}, Compos. Math. \textbf{150} (2014), no.~7, 1125--1168.
  \MR{3230848}

\bibitem{Yasuda2017:Toward}
\bysame, \emph{Toward motivic integration over wild {D}eligne-{M}umford
  stacks}, Higher dimensional algebraic geometry---in honour of {P}rofessor
  {Y}ujiro {K}awamata's sixtieth birthday, Adv. Stud. Pure Math., vol.~74,
  Math. Soc. Japan, Tokyo, 2017, pp.~407--437. \MR{3791224}

\bibitem{Yasuda2018:Cohen}
\bysame, \emph{{C}ohen--{M}acaulayness of klt singularities in positive
  characteristics},
  \url{https://mathsoc.jp/section/algebra/algsymp_past/algsymp18_files/houkokusyu/15-Yasuda.pdf},
  2018, The 63th Algebra Symposium.

\bibitem{Yasuda2019:Discrepancies}
\bysame, \emph{Discrepancies of {$p$}-cyclic quotient varieties}, J. Math. Sci.
  Univ. Tokyo \textbf{26} (2019), no.~1, 1--14. \MR{3929517}

\bibitem{Yasuda2019:Motivic}
\bysame, \emph{Motivic integration over wild {D}eligne--{M}umford stacks},
  \url{https://arxiv.org/abs/1908.02932}, 2019, preprint.

\end{thebibliography}
\bibliographystyle{amsplain}
\end{document}